\newtheorem{theorem}{Theorem}[section]
\newtheorem{definition}{Definition}[section]
\newtheorem{lemma}[theorem]{Lemma}
\newtheorem{proposition}[theorem]{Proposition}
\newenvironment{proof}[1][Proof]{\textbf{#1.} }{\ \rule{0.5em}{0.5em}}
\begin{document}

\title{Commutator length of annulus diffeomorphisms}
\author{E. Militon}
\date{\today}
\maketitle

\begin{abstract}
We study the group $\mathrm{Diff}_{0}^{r}(\mathbb{A})$ of $C^{r}$-diffeomorphisms of the closed annulus that are isotopic to the identity. We show that, for $r \neq 3$, the linear space of homogeneous quasi-morphisms on the group $\mathrm{Diff}_{0}^{r}(\mathbb{A})$ is one dimensional. Therefore, the commutator length on this group is (stably) unbounded. In particular, this provides an example of a manifold whose diffeomorphisms group is unbounded in the sense of Burago, Ivanov and Polterovich. 
\end{abstract}

\section{Introduction}

Let $M$ be a manifold. For $r$ in $\mathbb{N} \cup \left\{ \infty \right\}$, denote by $\mathrm{Diff}_{0}^{r}(M)$ the identity component of the group of $C^{r}$-diffeomorphism of $M$. We also write $\mathrm{Homeo}_{0}(M)=\mathrm{Diff}_{0}^{0}(M)$. We study algebraic properties of these groups.

A commutator in $\mathrm{Diff}_{0}^{r}(M)$ is an element of the form $[f,g]=f\circ g \circ f^{-1} \circ g^{-1}$. It is known that, for $r \neq dim(M)$ and $r \neq dim(M)+1$  every element in $\mathrm{Diff}_{0}^{r}(M)$ can be written as a product of commutators. A question naturally arises: how many commutators shall we need to write a given diffeomorphism as a product of commutators? For an element $f$ in $\mathrm{Diff}_{0}^{r}(M)$, the \textit{commutator length} $cl_{r}(f)$ is the minimal number of commutators needed to write $f$ as a product of commutators. Burago, Ivanov and Polterovich showed in \cite{BIP} that, when the manifold $M$ is a sphere or compact and three-dimensional without boundary, the commutator length is bounded (by $4$ in the case of the sphere and $10$ in the case of three-dimensional manifolds). They also exhibited a wide class of open manifolds (portable manifolds), including $\mathbb{R}^{n}$, for which the commutator length is bounded by $2$. Tsuboi generalized these results for odd-dimensional compact manifolds and gave a better bound: the commutator length is bounded by $6$ in those cases.

The \textit{support} $supp(f)$ of a homeomorphism $f$ of $M$ is defined to be the closure of the set:
$$\left\{ x \in M,f(x)\neq x \right\}.$$
A homeomorphism $f$ is said to be \textit{supported in a ball} if there exists a topological embedding $i:\mathbb{B}\rightarrow M$ of the closed unit ball $\mathbb{B}$ of the same dimension as $M$ such that $supp(f) \subset i(\mathbb{B})$ and $i(\mathbb{B}) \cap \partial M$ is homeomorphic to an open cell. Like for commutators, every element $f$ in $\mathrm{Diff}_{0}^{r}(M)$ can be written as a product of diffeomorphisms in $\mathrm{Diff}_{0}^{r}(M)$ supported in balls and we define $\mathrm{Frag}_{r}(f)$ to be the minimal number of diffeomorphisms in such a decomposition. Burago, Ivanov, Polterovich and Tsuboi showed also that the fragmentation norm is bounded in the case of odd-dimensional manifolds and of open portable manifolds.

The commutator length and the fragmentation norm are two examples of the more general notion of conjugation-invariant norm on a group introduced by Burago, Ivanov and Polterovich in \cite{BIP}. They proved that the fragmentation norm plays a crucial role: every conjugation-invariant norm on $\mathrm{Diff}_{0}^{r}(M)$ is bounded if and only if the fragmentation norm is bounded.

In this paper, we consider the case of the closed annulus $\mathbb{A}$. We prove some estimates on the commutator length and the fragmentation norm, which implie in particular that they are unbounded, contrarily to the above examples \footnote{Observe in particular that, for a manifold $M$ with interior $\mathrm{int}(M)$, the boundedness of $\mathrm{Diff}_{0}^{r}(\partial M)$ and of $\mathrm{Diff}_{0}^{r}(\mathrm{int}(M))$ does not necessarily implie that $\mathrm{Diff}_{0}^{r}(M)$ is bounded (compare to \cite{Afu} ).}. For a diffeomorphism $f$ in $\mathrm{Diff}_{0}^{r}(\mathbb{A})$, let $\rho(f)$ be the difference of the translation numbers of $f$ on the two boundary components (for a precise definition of $\rho(f)$, see the next section). The following theorem shows that $\rho$ is a quasi-isometry from $\mathrm{Diff}_{0}^{r}(M)$, endowed with the fragmentation norm or the commutator length, to $\mathbb{R}$:

\begin{theorem} \label{qi} Let $r$ be an integer different from $2$ and $3$. For any diffeomorphism $f$ in $\mathrm{Diff}_{0}^{r}(\mathbb{A})$, 
$$ \left| \frac{| \rho(f) |}{4}- cl_{r}(f) \right| \leq 12$$
and
$$| \rho(f)-\mathrm{Frag}_{r}(f)| \leq 40.$$
\end{theorem}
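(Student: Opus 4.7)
My plan is to establish matching upper and lower bounds: $cl_r(f) \approx |\rho(f)|/4$ and $\mathrm{Frag}_r(f) \approx |\rho(f)|$.

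The first step is to observe that $\rho$, defined by lifting $f$ to $\tilde f:\mathbb{R}\times[0,1]\to\mathbb{R}\times[0,1]$ and setting $\rho(f) = \tau_1(\tilde f) - \tau_0(\tilde f)$, with $\tau_j$ the translation number of the restriction to the $j$-th boundary line, is a well-defined homogeneous quasi-morphism on $\mathrm{Diff}_0^r(\mathbb{A})$ of defect $D_\rho\leq 2$. The lower bound $cl_r(f) \geq |\rho(f)|/4$ then follows from the classical estimate $|\rho(c_1\cdots c_n)| \leq (2n-1)D_\rho \leq 4n-2$ for products of $n$ commutators. The lower bound $\mathrm{Frag}_r(f)\geq|\rho(f)|$ rests on the topological fact that any ball $B\subset\mathbb{A}$ with $B\cap\partial\mathbb{A}$ an open cell meets \emph{at most one} boundary component (else $B$ would contain an essential loop). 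For $f = f_1\cdots f_n$ with each $f_i$ supported in a ball $B_i$, choose the lift $\tilde f_i$ which is the identity outside a single preimage of $B_i$; this displaces points by at most $1$ on the single affected boundary and by $0$ on the other. A telescoping-displacement argument then gives $|\tau_j(\tilde f_1\cdots\tilde f_n)| \leq n_j$, where $n_j$ counts the $B_i$ meeting the $j$-th boundary; summing yields $|\rho(f)| \leq n_+ + n_- = n$.

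The main content is the construction of matching upper bounds. I would first produce concrete model elements realising the rates: a shear $\sigma\in\mathrm{Diff}_0^r(\mathbb{A})$ with $\rho(\sigma)=1$ decomposing as two ball-supported ``half-twists'' (each in a disk touching one boundary), and a diffeomorphism $\tau$ with $\rho(\tau)$ close to $4$ that is a product of two commutators saturating the defect inequality. For a general $f$ with $\rho(f)=a$, we then reduce via $f = \tau^k h$ (respectively $f = \sigma^{\lfloor a\rfloor} h'$) with $k\approx a/4$ chosen so that $h$ (resp.\ $h'$) has $|\rho|$ bounded by a universal constant. By a careful chaining construction, $\tau^k$ is written as a product of commutators of length approximately $k$, and $\sigma^{\lfloor a\rfloor}$ as a product of $2\lfloor a\rfloor$ ball-supported factors.

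The principal obstacle is the remaining step: showing that any $h\in\mathrm{Diff}_0^r(\mathbb{A})$ with $|\rho(h)|$ bounded has uniformly bounded $cl_r$ and $\mathrm{Frag}_r$. My plan is: (i) post-compose $h$ with a suitable rotation of the whole annulus --- cheap in both norms, as rotations lie in a compact one-parameter subgroup --- to arrange that both boundary translation numbers lie near $0$; (ii) use that the boundary restrictions are then close to the identity to modify $h$ by a uniformly bounded number of ball-supported correction factors so that the result is the identity on a collar neighbourhood of $\partial\mathbb{A}$; (iii) cover the remaining support by two open disks, decompose as a product of two disk-supported diffeomorphisms, and apply the boundedness results of \cite{BIP} for disks (which are portable) to each factor. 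Step (ii) --- the collaring --- is the delicate step where the additive constants $12$ and $40$ are absorbed. The exclusion $r\neq 2,3$ enters only to guarantee perfectness of $\mathrm{Diff}_0^r(\mathbb{A})$ (these being $\dim\mathbb{A}$ and $\dim\mathbb{A}+1$), without which finite commutator decompositions are not known to exist.
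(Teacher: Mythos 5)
Your lower bounds are essentially fine: $cl_r(f)\geq|\rho(f)|/4$ via the defect estimate is exactly the paper's Lemma 2.2 applied to $\rho$, and the boundary-displacement count gives $\mathrm{Frag}_r(f)\geq|\rho(f)|$ (though your justification that a ball meets only one boundary circle is wrong -- a rectangle joining the two circles contains no essential loop; the fact follows from the definition, which requires $i(\mathbb{B})\cap\partial M$ to be a \emph{single} open cell). The genuine gap is in the upper bounds: the ``fixed model element raised to powers'' scheme provably cannot reach the stated constants. For the fragmentation norm, every ball-supported map has $\rho=0$ (its canonical lift fixes both boundary lines up to an arc-supported perturbation on at most one of them), so any $\sigma$ with $\rho(\sigma)=1$ satisfies $\mathrm{Frag}(\sigma)\geq 2$; hence $\sigma^{\lfloor a\rfloor}$ costs at least $2\lfloor a\rfloor$ factors and your estimate is $\mathrm{Frag}_r(f)\leq 2a+C$, not $a+40$. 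For the commutator length, if $\tau$ is two commutators then $\tau^k$ is a priori $2k$ commutators and the promised ``careful chaining construction'' reducing this to $\approx k$ is precisely the missing hard step; if instead $\tau$ is a single commutator, then its boundary lifts are commutators in $\mathrm{Homeo}_{\mathbb{Z}}(\mathbb{R})$, hence have a point of displacement $<1$ and translation number of absolute value $<2$, so $|\rho(\tau)|<4$ strictly, and with $\rho(\tau)=4-\epsilon$ your scheme gives $cl_r(f)\leq a/(4-\epsilon)+C$, which violates $a/4+12$ for large $a$. The sharp cost per unit of $\rho$ requires, for \emph{every} $k$, elements that are $k$ commutators whose boundary displacement is as large as $2k-1$: this is the Eisenbud--Hirsch--Neumann theorem ($F$ is a product of $n$ commutators in $\mathrm{Homeo}_{\mathbb{Z}}(\mathbb{R})$ iff $\min_x|F(x)-x|<2n-1$), which the paper applies to the two boundary lifts after normalizing the lift so that the displacements are close to $+(2k-1)$ and $-(2k-1)$; likewise the fragmentation rate comes from the circle-level statement $\mathrm{Frag}(F)\leq k+2\iff\min_x|F(x)-x|<k+1$ (Proposition 4.4), applied after choosing the lift so that \emph{all} of $\alpha(f)$ sits on a single boundary component (displacement in $(-1,0]$ on the other). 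Subadditivity of conjugation-invariant norms on powers of a fixed model cannot recover these rates.

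Your reduction ``bounded $\rho$ implies bounded $cl$ and $\mathrm{Frag}$'' is in the spirit of the paper's second step (kill the boundary behaviour at bounded cost, then use Sch\"onfliess-type decompositions, the half-plane commutator lemma and BIP portability), but as written it has problems: a rotation of the annulus is not obviously cheap in the fragmentation norm (it is not ball-supported, and membership in a compact one-parameter subgroup gives no bound); translation numbers near $0$ do not make the boundary restrictions $C^0$-close to the identity (one only gets fixed points, and the correction is made by disc-supported Sch\"onfliess factors); and for $r>0$ one cannot produce the collar corrections by cutting off along curves while staying smooth -- the paper first proves everything for homeomorphisms and then approximates the factors by diffeomorphisms, absorbing the $C^0$-small error with the smooth fragmentation lemma of the appendix and portability of two strips, which is also where the hypothesis $r\neq 2,3$ actually enters (via BIP Theorem 1.18), not merely through perfectness of $\mathrm{Diff}_0^r(\mathbb{A})$.
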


In fact, the map $\rho$ is a quasi-morphism (this notion will be defined in the next section) on the group $\mathrm{Diff}_{0}^{r}(\mathbb{A})$ and is essentially the only quasi-morphism on this group:

\begin{theorem} \label{qm} If $r \neq 2, 3$, every homogeneous quasi-morphism on $\mathrm{Diff}_{0}^{r}(\mathbb{A})$ is colinear to $\rho$.
\end{theorem}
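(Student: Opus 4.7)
The strategy is to use Theorem \ref{qi} to show that any homogeneous quasi-morphism $\varphi$ on $G := \mathrm{Diff}_{0}^{r}(\mathbb{A})$ is controlled linearly by $|\rho|$, then exploit homogeneity of $\varphi$ to upgrade this control into exact proportionality. Let $\varphi$ be a homogeneous quasi-morphism on $G$ with defect $D(\varphi)$. A standard computation (a homogeneous quasi-morphism is conjugation-invariant, hence vanishes on each commutator up to the defect, and picks up at most $D(\varphi)$ each time we break a product) gives
$$|\varphi(f)| \leq (2\,\mathrm{cl}_{r}(f) - 1)\,D(\varphi) \qquad \text{for every } f \in G.$$
Combined with the inequality $\mathrm{cl}_{r}(f) \leq |\rho(f)|/4 + 12$ from Theorem \ref{qi}, this yields a linear bound
$$|\varphi(f)| \leq \tfrac{1}{2}\,D(\varphi)\,|\rho(f)| + 24\,D(\varphi).$$

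Next I would pick a reference element $g_{0} \in G$ with $\rho(g_{0}) \neq 0$ --- for instance a diffeomorphism that is a non-trivial rotation near one boundary component and the identity near the other. Setting $\lambda := \varphi(g_{0})/\rho(g_{0})$ and $\psi := \varphi - \lambda \rho$, I obtain a homogeneous quasi-morphism $\psi$ with $\psi(g_{0}) = 0$, which still satisfies a linear bound $|\psi(f)| \leq C_{1}\,|\rho(f)| + C_{2}$ for constants depending only on $\varphi$. The goal is now to show $\psi \equiv 0$.

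The final step is a scaling argument. Fix $h \in G$; for each positive integer $n$, let $m_{n}$ be the nearest integer to $n\,\rho(h)/\rho(g_{0})$. Because $\rho$ is itself a homogeneous quasi-morphism,
$$\bigl|\rho(h^{n}g_{0}^{-m_{n}}) - n\rho(h) + m_{n}\rho(g_{0})\bigr| \leq D(\rho),$$
so $\rho(h^{n}g_{0}^{-m_{n}})$ is bounded uniformly in $n$ (by $D(\rho)+|\rho(g_{0})|/2$). The linear bound applied to $\psi$ then gives $|\psi(h^{n}g_{0}^{-m_{n}})| \leq K$ independently of $n$. But also
$$\bigl|\psi(h^{n}g_{0}^{-m_{n}}) - \psi(h^{n}) - \psi(g_{0}^{-m_{n}})\bigr| \leq D(\psi),$$
and by homogeneity $\psi(h^{n}) = n\psi(h)$ and $\psi(g_{0}^{-m_{n}}) = -m_{n}\psi(g_{0}) = 0$. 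Hence $|n\psi(h)| \leq K + D(\psi)$; dividing by $n$ and letting $n \to \infty$ forces $\psi(h) = 0$. Since $h$ was arbitrary, $\varphi = \lambda \rho$.

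The genuinely non-trivial input is Theorem \ref{qi}, which is the heart of the paper; given its linear bound on $\mathrm{cl}_{r}$, the deduction of Theorem \ref{qm} is routine quasi-morphism bookkeeping. The only points to be careful about are the existence of $g_{0} \in \mathrm{Diff}_{0}^{r}(\mathbb{A})$ with $\rho(g_{0}) \neq 0$ (immediate from the definition of $\rho$), and the diophantine choice $m_{n} \approx n\rho(h)/\rho(g_{0})$, which is what allows the argument to transfer from the kernel of $\rho$ to the whole group.
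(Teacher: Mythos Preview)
Your proof is correct and follows essentially the same route as the paper: bound an arbitrary homogeneous quasi-morphism linearly by $|\rho|$ via the commutator-length estimate, subtract a suitable multiple of $\rho$, and show the remainder is bounded (hence zero) by a twisting argument. The only difference is cosmetic: the paper takes for $g_{0}$ a concrete twist $t$ with $\rho(t)=1$, so a single integer power $t^{n_{0}}$ makes $\alpha(t^{n_{0}}f)<1$ exactly and your diophantine step with $m_{n}$ becomes unnecessary.
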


\underline{Remark}: for $r=2$ or $r=3$, the result holds if we replace $\mathrm{Diff}_{0}^{r}(\mathbb{A})$ by $[\mathrm{Diff}_{0}^{r}(\mathbb{A}),\mathrm{Diff}_{0}^{r}(\mathbb{A})]$, the subgroup generated by commutators. Whether the equality $[\mathrm{Diff}_{0}^{3}(\mathbb{A}),\mathrm{Diff}_{0}^{3}(\mathbb{A})]=\mathrm{Diff}_{0}^{3}(\mathbb{A})$ holds or not is still an open question.

In the next section, we state a proposition which give fine estimates on the commutator length. We also show that this proposition implie the first part of Theorem \ref{qi} and Theorem \ref{qm}. The third section is devoted to the proof of this proposition. In the fourth section, we prove an analogous proposition on the fragmentation norm which implies the second part of Theorem \ref{qi}. Finally, in the last section, we discuss generalizations of these results to other surfaces. We will prove the following result:

\begin{theorem}
Let $M$ be a surface with non-empty boundary which is different from the disk and $r$ be a number in $\mathbb{N} \cup \left\{\infty\right\}$. Then the group $\mathrm{Diff}_{0}^{r}(M)$ admits non-trivial homogeneous quasi-morphisms. In particular, this group is stably unbounded.
\end{theorem}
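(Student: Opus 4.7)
The plan is to mimic the annulus construction of $\rho$ by extracting a real-valued translation number from the action of $\mathrm{Diff}_0^r(M)$ on a single boundary circle $C\subset\partial M$; for this to yield an actual real-valued quasi-morphism rather than one defined only modulo $\mathbb{Z}$, the key technical input is the simple connectedness of $\mathrm{Diff}_0^r(M)$.

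If $M=\mathbb{A}$, Theorem~\ref{qm} already supplies the desired quasi-morphism $\rho$. Otherwise, for $M$ an orientable compact surface with boundary different from the disk and the annulus, one has $\chi(M)<0$, and I would invoke the classical theorems of Earle--Eells, Hamstrom, Gramain, and Hatcher to conclude that $\mathrm{Diff}_0^r(M)$ is contractible for $r\in\mathbb{N}\cup\{\infty\}$, hence simply connected. I would then fix a boundary component $C$ and consider the continuous homomorphism $R\colon\mathrm{Diff}_0^r(M)\to\mathrm{Diff}_0^r(C)$ given by restriction to $C$; by simple connectedness of the source it lifts uniquely to a continuous homomorphism
$$\tilde R\colon\mathrm{Diff}_0^r(M)\longrightarrow\widetilde{\mathrm{Diff}_0^r(C)}$$
into the universal cover. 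Concretely, given $f$ and an isotopy $(f_t)$ to the identity, one restricts $(f_t|_C)$ and lifts this path in $\mathrm{Diff}_0^r(C)$ starting at the identity; the outcome is isotopy-independent by simple connectedness. Composing with the classical translation number $\tilde\tau\colon\widetilde{\mathrm{Diff}_0^r(C)}\to\mathbb{R}$ then produces the homogeneous quasi-morphism $q:=\tilde\tau\circ\tilde R$ on $\mathrm{Diff}_0^r(M)$.

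Non-triviality of $q$ is established by an explicit construction: in a collar $C\times[0,\varepsilon)\hookrightarrow M$, take $f$ equal to the identity outside the collar and to rotation by angle $\theta\phi(s)$ on each slice $C\times\{s\}$, for a smooth bump $\phi$ with $\phi(0)=1$ and $\phi(\varepsilon)=0$; the obvious straight-line isotopy yields $q(f)=\theta/(2\pi)\neq 0$. The only really delicate case is the M\"obius band $N$, for which $\mathrm{Diff}_0^r(N)\simeq S^1$ is not simply connected and the generator of $\pi_1$ acts as a full rotation on $\partial N$, so the lift $\tilde R$ fails to descend. I would handle the M\"obius band by passing to the orientation double cover $\mathbb{A}\to N$, producing a canonical orientation-preserving lift homomorphism $L\colon\mathrm{Diff}_0^r(N)\to\mathrm{Diff}_0^r(\mathbb{A})$; however, $\rho\circ L$ vanishes identically because $\mathbb{Z}/2$-equivariance forces equal rotation numbers on the two boundaries of $\mathbb{A}$, so one must construct a different quasi-morphism on the equivariant subgroup that detects the M\"obius twist. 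This M\"obius band subcase is the main technical obstacle; for all other surfaces the proof is essentially a direct transfer of Theorem~\ref{qm} by naturality of the translation number.
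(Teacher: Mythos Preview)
Your route via simple connectedness of $\mathrm{Diff}_0^r(M)$ is genuinely different from the paper's. The paper never appeals to the homotopy type of the diffeomorphism group; instead it produces a canonical lift of each $f\in\mathrm{Homeo}_0(M)$ to a covering space of $M$ itself and reads off a translation number there. For compact oriented $M$ with $\chi(M)<0$ it embeds the universal cover $\tilde M$ in the Poincar\'e disc with geodesic boundary and takes the unique lift $F$ that fixes the limit set of $\tilde M$ on $\partial\mathbb{D}$; the quasi-morphism is then the translation number of $F$ along a boundary geodesic of $\tilde M$. For non-compact $M$ it uses an infinite cyclic cover associated to a proper ray from the chosen boundary circle to an end, together with the unique compactly supported lift. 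Your argument is slicker once one grants the Earle--Eells--Gramain--Hatcher contractibility theorems, whereas the paper's is more elementary and self-contained, needing only the hyperbolic structure.

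Two gaps remain in your proposal. First, you treat only compact surfaces; the paper handles the non-compact case by the cyclic-cover mechanism just described, and your lifting argument does not obviously cover it without further work, since the contractibility theorems you invoke are stated for compact surfaces. Second, you correctly flag the M\"obius band but do not resolve it: your diagnosis that both the direct lift $\tilde R$ and the pullback of $\rho$ through the orientation double cover fail is accurate, yet no substitute construction is offered. In fairness, the paper's Section~5 only carries out the construction for open surfaces and for compact \emph{oriented} surfaces, so the M\"obius band is not explicitly treated there either.
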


\framebox{~~~\parbox{0.95\textwidth}{
\ifx\JPicScale\undefined\def\JPicScale{1}\fi
\unitlength \JPicScale mm
\begin{picture}(100,80)(0,0)
\linethickness{0.3mm}
\put(80,49.75){\line(0,1){0.51}}
\multiput(79.97,50.76)(0.03,-0.51){1}{\line(0,-1){0.51}}
\multiput(79.92,51.26)(0.05,-0.5){1}{\line(0,-1){0.5}}
\multiput(79.84,51.76)(0.08,-0.5){1}{\line(0,-1){0.5}}
\multiput(79.74,52.26)(0.1,-0.5){1}{\line(0,-1){0.5}}
\multiput(79.61,52.75)(0.13,-0.49){1}{\line(0,-1){0.49}}
\multiput(79.46,53.23)(0.15,-0.48){1}{\line(0,-1){0.48}}
\multiput(79.29,53.71)(0.18,-0.48){1}{\line(0,-1){0.48}}
\multiput(79.09,54.18)(0.1,-0.23){2}{\line(0,-1){0.23}}
\multiput(78.86,54.63)(0.11,-0.23){2}{\line(0,-1){0.23}}
\multiput(78.62,55.07)(0.12,-0.22){2}{\line(0,-1){0.22}}
\multiput(78.35,55.5)(0.13,-0.21){2}{\line(0,-1){0.21}}
\multiput(78.06,55.92)(0.14,-0.21){2}{\line(0,-1){0.21}}
\multiput(77.75,56.32)(0.1,-0.13){3}{\line(0,-1){0.13}}
\multiput(77.42,56.7)(0.11,-0.13){3}{\line(0,-1){0.13}}
\multiput(77.07,57.07)(0.12,-0.12){3}{\line(0,-1){0.12}}
\multiput(76.7,57.42)(0.12,-0.12){3}{\line(1,0){0.12}}
\multiput(76.32,57.75)(0.13,-0.11){3}{\line(1,0){0.13}}
\multiput(75.92,58.06)(0.13,-0.1){3}{\line(1,0){0.13}}
\multiput(75.5,58.35)(0.21,-0.14){2}{\line(1,0){0.21}}
\multiput(75.07,58.62)(0.21,-0.13){2}{\line(1,0){0.21}}
\multiput(74.63,58.86)(0.22,-0.12){2}{\line(1,0){0.22}}
\multiput(74.18,59.09)(0.23,-0.11){2}{\line(1,0){0.23}}
\multiput(73.71,59.29)(0.23,-0.1){2}{\line(1,0){0.23}}
\multiput(73.23,59.46)(0.48,-0.18){1}{\line(1,0){0.48}}
\multiput(72.75,59.61)(0.48,-0.15){1}{\line(1,0){0.48}}
\multiput(72.26,59.74)(0.49,-0.13){1}{\line(1,0){0.49}}
\multiput(71.76,59.84)(0.5,-0.1){1}{\line(1,0){0.5}}
\multiput(71.26,59.92)(0.5,-0.08){1}{\line(1,0){0.5}}
\multiput(70.76,59.97)(0.5,-0.05){1}{\line(1,0){0.5}}
\multiput(70.25,60)(0.51,-0.03){1}{\line(1,0){0.51}}
\put(69.75,60){\line(1,0){0.51}}
\multiput(69.24,59.97)(0.51,0.03){1}{\line(1,0){0.51}}
\multiput(68.74,59.92)(0.5,0.05){1}{\line(1,0){0.5}}
\multiput(68.24,59.84)(0.5,0.08){1}{\line(1,0){0.5}}
\multiput(67.74,59.74)(0.5,0.1){1}{\line(1,0){0.5}}
\multiput(67.25,59.61)(0.49,0.13){1}{\line(1,0){0.49}}
\multiput(66.77,59.46)(0.48,0.15){1}{\line(1,0){0.48}}
\multiput(66.29,59.29)(0.48,0.18){1}{\line(1,0){0.48}}
\multiput(65.82,59.09)(0.23,0.1){2}{\line(1,0){0.23}}
\multiput(65.37,58.86)(0.23,0.11){2}{\line(1,0){0.23}}
\multiput(64.93,58.62)(0.22,0.12){2}{\line(1,0){0.22}}
\multiput(64.5,58.35)(0.21,0.13){2}{\line(1,0){0.21}}
\multiput(64.08,58.06)(0.21,0.14){2}{\line(1,0){0.21}}
\multiput(63.68,57.75)(0.13,0.1){3}{\line(1,0){0.13}}
\multiput(63.3,57.42)(0.13,0.11){3}{\line(1,0){0.13}}
\multiput(62.93,57.07)(0.12,0.12){3}{\line(1,0){0.12}}
\multiput(62.58,56.7)(0.12,0.12){3}{\line(0,1){0.12}}
\multiput(62.25,56.32)(0.11,0.13){3}{\line(0,1){0.13}}
\multiput(61.94,55.92)(0.1,0.13){3}{\line(0,1){0.13}}
\multiput(61.65,55.5)(0.14,0.21){2}{\line(0,1){0.21}}
\multiput(61.38,55.07)(0.13,0.21){2}{\line(0,1){0.21}}
\multiput(61.14,54.63)(0.12,0.22){2}{\line(0,1){0.22}}
\multiput(60.91,54.18)(0.11,0.23){2}{\line(0,1){0.23}}
\multiput(60.71,53.71)(0.1,0.23){2}{\line(0,1){0.23}}
\multiput(60.54,53.23)(0.18,0.48){1}{\line(0,1){0.48}}
\multiput(60.39,52.75)(0.15,0.48){1}{\line(0,1){0.48}}
\multiput(60.26,52.26)(0.13,0.49){1}{\line(0,1){0.49}}
\multiput(60.16,51.76)(0.1,0.5){1}{\line(0,1){0.5}}
\multiput(60.08,51.26)(0.08,0.5){1}{\line(0,1){0.5}}
\multiput(60.03,50.76)(0.05,0.5){1}{\line(0,1){0.5}}
\multiput(60,50.25)(0.03,0.51){1}{\line(0,1){0.51}}
\put(60,49.75){\line(0,1){0.51}}
\multiput(60,49.75)(0.03,-0.51){1}{\line(0,-1){0.51}}
\multiput(60.03,49.24)(0.05,-0.5){1}{\line(0,-1){0.5}}
\multiput(60.08,48.74)(0.08,-0.5){1}{\line(0,-1){0.5}}
\multiput(60.16,48.24)(0.1,-0.5){1}{\line(0,-1){0.5}}
\multiput(60.26,47.74)(0.13,-0.49){1}{\line(0,-1){0.49}}
\multiput(60.39,47.25)(0.15,-0.48){1}{\line(0,-1){0.48}}
\multiput(60.54,46.77)(0.18,-0.48){1}{\line(0,-1){0.48}}
\multiput(60.71,46.29)(0.1,-0.23){2}{\line(0,-1){0.23}}
\multiput(60.91,45.82)(0.11,-0.23){2}{\line(0,-1){0.23}}
\multiput(61.14,45.37)(0.12,-0.22){2}{\line(0,-1){0.22}}
\multiput(61.38,44.93)(0.13,-0.21){2}{\line(0,-1){0.21}}
\multiput(61.65,44.5)(0.14,-0.21){2}{\line(0,-1){0.21}}
\multiput(61.94,44.08)(0.1,-0.13){3}{\line(0,-1){0.13}}
\multiput(62.25,43.68)(0.11,-0.13){3}{\line(0,-1){0.13}}
\multiput(62.58,43.3)(0.12,-0.12){3}{\line(0,-1){0.12}}
\multiput(62.93,42.93)(0.12,-0.12){3}{\line(1,0){0.12}}
\multiput(63.3,42.58)(0.13,-0.11){3}{\line(1,0){0.13}}
\multiput(63.68,42.25)(0.13,-0.1){3}{\line(1,0){0.13}}
\multiput(64.08,41.94)(0.21,-0.14){2}{\line(1,0){0.21}}
\multiput(64.5,41.65)(0.21,-0.13){2}{\line(1,0){0.21}}
\multiput(64.93,41.38)(0.22,-0.12){2}{\line(1,0){0.22}}
\multiput(65.37,41.14)(0.23,-0.11){2}{\line(1,0){0.23}}
\multiput(65.82,40.91)(0.23,-0.1){2}{\line(1,0){0.23}}
\multiput(66.29,40.71)(0.48,-0.18){1}{\line(1,0){0.48}}
\multiput(66.77,40.54)(0.48,-0.15){1}{\line(1,0){0.48}}
\multiput(67.25,40.39)(0.49,-0.13){1}{\line(1,0){0.49}}
\multiput(67.74,40.26)(0.5,-0.1){1}{\line(1,0){0.5}}
\multiput(68.24,40.16)(0.5,-0.08){1}{\line(1,0){0.5}}
\multiput(68.74,40.08)(0.5,-0.05){1}{\line(1,0){0.5}}
\multiput(69.24,40.03)(0.51,-0.03){1}{\line(1,0){0.51}}
\put(69.75,40){\line(1,0){0.51}}
\multiput(70.25,40)(0.51,0.03){1}{\line(1,0){0.51}}
\multiput(70.76,40.03)(0.5,0.05){1}{\line(1,0){0.5}}
\multiput(71.26,40.08)(0.5,0.08){1}{\line(1,0){0.5}}
\multiput(71.76,40.16)(0.5,0.1){1}{\line(1,0){0.5}}
\multiput(72.26,40.26)(0.49,0.13){1}{\line(1,0){0.49}}
\multiput(72.75,40.39)(0.48,0.15){1}{\line(1,0){0.48}}
\multiput(73.23,40.54)(0.48,0.18){1}{\line(1,0){0.48}}
\multiput(73.71,40.71)(0.23,0.1){2}{\line(1,0){0.23}}
\multiput(74.18,40.91)(0.23,0.11){2}{\line(1,0){0.23}}
\multiput(74.63,41.14)(0.22,0.12){2}{\line(1,0){0.22}}
\multiput(75.07,41.38)(0.21,0.13){2}{\line(1,0){0.21}}
\multiput(75.5,41.65)(0.21,0.14){2}{\line(1,0){0.21}}
\multiput(75.92,41.94)(0.13,0.1){3}{\line(1,0){0.13}}
\multiput(76.32,42.25)(0.13,0.11){3}{\line(1,0){0.13}}
\multiput(76.7,42.58)(0.12,0.12){3}{\line(1,0){0.12}}
\multiput(77.07,42.93)(0.12,0.12){3}{\line(0,1){0.12}}
\multiput(77.42,43.3)(0.11,0.13){3}{\line(0,1){0.13}}
\multiput(77.75,43.68)(0.1,0.13){3}{\line(0,1){0.13}}
\multiput(78.06,44.08)(0.14,0.21){2}{\line(0,1){0.21}}
\multiput(78.35,44.5)(0.13,0.21){2}{\line(0,1){0.21}}
\multiput(78.62,44.93)(0.12,0.22){2}{\line(0,1){0.22}}
\multiput(78.86,45.37)(0.11,0.23){2}{\line(0,1){0.23}}
\multiput(79.09,45.82)(0.1,0.23){2}{\line(0,1){0.23}}
\multiput(79.29,46.29)(0.18,0.48){1}{\line(0,1){0.48}}
\multiput(79.46,46.77)(0.15,0.48){1}{\line(0,1){0.48}}
\multiput(79.61,47.25)(0.13,0.49){1}{\line(0,1){0.49}}
\multiput(79.74,47.74)(0.1,0.5){1}{\line(0,1){0.5}}
\multiput(79.84,48.24)(0.08,0.5){1}{\line(0,1){0.5}}
\multiput(79.92,48.74)(0.05,0.5){1}{\line(0,1){0.5}}
\multiput(79.97,49.24)(0.03,0.51){1}{\line(0,1){0.51}}

\linethickness{0.3mm}
\put(100,49.75){\line(0,1){0.5}}
\multiput(99.99,50.75)(0.01,-0.5){1}{\line(0,-1){0.5}}
\multiput(99.97,51.25)(0.02,-0.5){1}{\line(0,-1){0.5}}
\multiput(99.95,51.75)(0.03,-0.5){1}{\line(0,-1){0.5}}
\multiput(99.92,52.25)(0.03,-0.5){1}{\line(0,-1){0.5}}
\multiput(99.87,52.75)(0.04,-0.5){1}{\line(0,-1){0.5}}
\multiput(99.82,53.25)(0.05,-0.5){1}{\line(0,-1){0.5}}
\multiput(99.76,53.75)(0.06,-0.5){1}{\line(0,-1){0.5}}
\multiput(99.7,54.25)(0.07,-0.5){1}{\line(0,-1){0.5}}
\multiput(99.62,54.74)(0.08,-0.5){1}{\line(0,-1){0.5}}
\multiput(99.54,55.24)(0.08,-0.49){1}{\line(0,-1){0.49}}
\multiput(99.45,55.73)(0.09,-0.49){1}{\line(0,-1){0.49}}
\multiput(99.35,56.22)(0.1,-0.49){1}{\line(0,-1){0.49}}
\multiput(99.24,56.71)(0.11,-0.49){1}{\line(0,-1){0.49}}
\multiput(99.12,57.2)(0.12,-0.49){1}{\line(0,-1){0.49}}
\multiput(99,57.68)(0.12,-0.49){1}{\line(0,-1){0.49}}
\multiput(98.87,58.17)(0.13,-0.48){1}{\line(0,-1){0.48}}
\multiput(98.73,58.65)(0.14,-0.48){1}{\line(0,-1){0.48}}
\multiput(98.58,59.13)(0.15,-0.48){1}{\line(0,-1){0.48}}
\multiput(98.42,59.6)(0.16,-0.48){1}{\line(0,-1){0.48}}
\multiput(98.26,60.08)(0.16,-0.47){1}{\line(0,-1){0.47}}
\multiput(98.08,60.55)(0.17,-0.47){1}{\line(0,-1){0.47}}
\multiput(97.9,61.02)(0.09,-0.23){2}{\line(0,-1){0.23}}
\multiput(97.72,61.48)(0.09,-0.23){2}{\line(0,-1){0.23}}
\multiput(97.52,61.94)(0.1,-0.23){2}{\line(0,-1){0.23}}
\multiput(97.32,62.4)(0.1,-0.23){2}{\line(0,-1){0.23}}
\multiput(97.11,62.85)(0.11,-0.23){2}{\line(0,-1){0.23}}
\multiput(96.89,63.31)(0.11,-0.23){2}{\line(0,-1){0.23}}
\multiput(96.66,63.75)(0.11,-0.22){2}{\line(0,-1){0.22}}
\multiput(96.43,64.2)(0.12,-0.22){2}{\line(0,-1){0.22}}
\multiput(96.19,64.64)(0.12,-0.22){2}{\line(0,-1){0.22}}
\multiput(95.94,65.07)(0.12,-0.22){2}{\line(0,-1){0.22}}
\multiput(95.68,65.5)(0.13,-0.22){2}{\line(0,-1){0.22}}
\multiput(95.42,65.93)(0.13,-0.21){2}{\line(0,-1){0.21}}
\multiput(95.15,66.35)(0.13,-0.21){2}{\line(0,-1){0.21}}
\multiput(94.87,66.77)(0.14,-0.21){2}{\line(0,-1){0.21}}
\multiput(94.59,67.18)(0.14,-0.21){2}{\line(0,-1){0.21}}
\multiput(94.3,67.59)(0.15,-0.2){2}{\line(0,-1){0.2}}
\multiput(94,68)(0.15,-0.2){2}{\line(0,-1){0.2}}
\multiput(93.7,68.4)(0.1,-0.13){3}{\line(0,-1){0.13}}
\multiput(93.39,68.79)(0.1,-0.13){3}{\line(0,-1){0.13}}
\multiput(93.07,69.18)(0.11,-0.13){3}{\line(0,-1){0.13}}
\multiput(92.75,69.56)(0.11,-0.13){3}{\line(0,-1){0.13}}
\multiput(92.42,69.94)(0.11,-0.13){3}{\line(0,-1){0.13}}
\multiput(92.08,70.31)(0.11,-0.12){3}{\line(0,-1){0.12}}
\multiput(91.74,70.67)(0.11,-0.12){3}{\line(0,-1){0.12}}
\multiput(91.39,71.04)(0.12,-0.12){3}{\line(0,-1){0.12}}
\multiput(91.04,71.39)(0.12,-0.12){3}{\line(1,0){0.12}}
\multiput(90.67,71.74)(0.12,-0.12){3}{\line(1,0){0.12}}
\multiput(90.31,72.08)(0.12,-0.11){3}{\line(1,0){0.12}}
\multiput(89.94,72.42)(0.12,-0.11){3}{\line(1,0){0.12}}
\multiput(89.56,72.75)(0.13,-0.11){3}{\line(1,0){0.13}}
\multiput(89.18,73.07)(0.13,-0.11){3}{\line(1,0){0.13}}
\multiput(88.79,73.39)(0.13,-0.11){3}{\line(1,0){0.13}}
\multiput(88.4,73.7)(0.13,-0.1){3}{\line(1,0){0.13}}
\multiput(88,74)(0.13,-0.1){3}{\line(1,0){0.13}}
\multiput(87.59,74.3)(0.2,-0.15){2}{\line(1,0){0.2}}
\multiput(87.18,74.59)(0.2,-0.15){2}{\line(1,0){0.2}}
\multiput(86.77,74.87)(0.21,-0.14){2}{\line(1,0){0.21}}
\multiput(86.35,75.15)(0.21,-0.14){2}{\line(1,0){0.21}}
\multiput(85.93,75.42)(0.21,-0.13){2}{\line(1,0){0.21}}
\multiput(85.5,75.68)(0.21,-0.13){2}{\line(1,0){0.21}}
\multiput(85.07,75.94)(0.22,-0.13){2}{\line(1,0){0.22}}
\multiput(84.64,76.19)(0.22,-0.12){2}{\line(1,0){0.22}}
\multiput(84.2,76.43)(0.22,-0.12){2}{\line(1,0){0.22}}
\multiput(83.75,76.66)(0.22,-0.12){2}{\line(1,0){0.22}}
\multiput(83.31,76.89)(0.22,-0.11){2}{\line(1,0){0.22}}
\multiput(82.85,77.11)(0.23,-0.11){2}{\line(1,0){0.23}}
\multiput(82.4,77.32)(0.23,-0.11){2}{\line(1,0){0.23}}
\multiput(81.94,77.52)(0.23,-0.1){2}{\line(1,0){0.23}}
\multiput(81.48,77.72)(0.23,-0.1){2}{\line(1,0){0.23}}
\multiput(81.02,77.9)(0.23,-0.09){2}{\line(1,0){0.23}}
\multiput(80.55,78.08)(0.23,-0.09){2}{\line(1,0){0.23}}
\multiput(80.08,78.26)(0.47,-0.17){1}{\line(1,0){0.47}}
\multiput(79.6,78.42)(0.47,-0.16){1}{\line(1,0){0.47}}
\multiput(79.13,78.58)(0.48,-0.16){1}{\line(1,0){0.48}}
\multiput(78.65,78.73)(0.48,-0.15){1}{\line(1,0){0.48}}
\multiput(78.17,78.87)(0.48,-0.14){1}{\line(1,0){0.48}}
\multiput(77.68,79)(0.48,-0.13){1}{\line(1,0){0.48}}
\multiput(77.2,79.12)(0.49,-0.12){1}{\line(1,0){0.49}}
\multiput(76.71,79.24)(0.49,-0.12){1}{\line(1,0){0.49}}
\multiput(76.22,79.35)(0.49,-0.11){1}{\line(1,0){0.49}}
\multiput(75.73,79.45)(0.49,-0.1){1}{\line(1,0){0.49}}
\multiput(75.24,79.54)(0.49,-0.09){1}{\line(1,0){0.49}}
\multiput(74.74,79.62)(0.49,-0.08){1}{\line(1,0){0.49}}
\multiput(74.25,79.7)(0.5,-0.08){1}{\line(1,0){0.5}}
\multiput(73.75,79.76)(0.5,-0.07){1}{\line(1,0){0.5}}
\multiput(73.25,79.82)(0.5,-0.06){1}{\line(1,0){0.5}}
\multiput(72.75,79.87)(0.5,-0.05){1}{\line(1,0){0.5}}
\multiput(72.25,79.92)(0.5,-0.04){1}{\line(1,0){0.5}}
\multiput(71.75,79.95)(0.5,-0.03){1}{\line(1,0){0.5}}
\multiput(71.25,79.97)(0.5,-0.03){1}{\line(1,0){0.5}}
\multiput(70.75,79.99)(0.5,-0.02){1}{\line(1,0){0.5}}
\multiput(70.25,80)(0.5,-0.01){1}{\line(1,0){0.5}}
\put(69.75,80){\line(1,0){0.5}}
\multiput(69.25,79.99)(0.5,0.01){1}{\line(1,0){0.5}}
\multiput(68.75,79.97)(0.5,0.02){1}{\line(1,0){0.5}}
\multiput(68.25,79.95)(0.5,0.03){1}{\line(1,0){0.5}}
\multiput(67.75,79.92)(0.5,0.03){1}{\line(1,0){0.5}}
\multiput(67.25,79.87)(0.5,0.04){1}{\line(1,0){0.5}}
\multiput(66.75,79.82)(0.5,0.05){1}{\line(1,0){0.5}}
\multiput(66.25,79.76)(0.5,0.06){1}{\line(1,0){0.5}}
\multiput(65.75,79.7)(0.5,0.07){1}{\line(1,0){0.5}}
\multiput(65.26,79.62)(0.5,0.08){1}{\line(1,0){0.5}}
\multiput(64.76,79.54)(0.49,0.08){1}{\line(1,0){0.49}}
\multiput(64.27,79.45)(0.49,0.09){1}{\line(1,0){0.49}}
\multiput(63.78,79.35)(0.49,0.1){1}{\line(1,0){0.49}}
\multiput(63.29,79.24)(0.49,0.11){1}{\line(1,0){0.49}}
\multiput(62.8,79.12)(0.49,0.12){1}{\line(1,0){0.49}}
\multiput(62.32,79)(0.49,0.12){1}{\line(1,0){0.49}}
\multiput(61.83,78.87)(0.48,0.13){1}{\line(1,0){0.48}}
\multiput(61.35,78.73)(0.48,0.14){1}{\line(1,0){0.48}}
\multiput(60.87,78.58)(0.48,0.15){1}{\line(1,0){0.48}}
\multiput(60.4,78.42)(0.48,0.16){1}{\line(1,0){0.48}}
\multiput(59.92,78.26)(0.47,0.16){1}{\line(1,0){0.47}}
\multiput(59.45,78.08)(0.47,0.17){1}{\line(1,0){0.47}}
\multiput(58.98,77.9)(0.23,0.09){2}{\line(1,0){0.23}}
\multiput(58.52,77.72)(0.23,0.09){2}{\line(1,0){0.23}}
\multiput(58.06,77.52)(0.23,0.1){2}{\line(1,0){0.23}}
\multiput(57.6,77.32)(0.23,0.1){2}{\line(1,0){0.23}}
\multiput(57.15,77.11)(0.23,0.11){2}{\line(1,0){0.23}}
\multiput(56.69,76.89)(0.23,0.11){2}{\line(1,0){0.23}}
\multiput(56.25,76.66)(0.22,0.11){2}{\line(1,0){0.22}}
\multiput(55.8,76.43)(0.22,0.12){2}{\line(1,0){0.22}}
\multiput(55.36,76.19)(0.22,0.12){2}{\line(1,0){0.22}}
\multiput(54.93,75.94)(0.22,0.12){2}{\line(1,0){0.22}}
\multiput(54.5,75.68)(0.22,0.13){2}{\line(1,0){0.22}}
\multiput(54.07,75.42)(0.21,0.13){2}{\line(1,0){0.21}}
\multiput(53.65,75.15)(0.21,0.13){2}{\line(1,0){0.21}}
\multiput(53.23,74.87)(0.21,0.14){2}{\line(1,0){0.21}}
\multiput(52.82,74.59)(0.21,0.14){2}{\line(1,0){0.21}}
\multiput(52.41,74.3)(0.2,0.15){2}{\line(1,0){0.2}}
\multiput(52,74)(0.2,0.15){2}{\line(1,0){0.2}}
\multiput(51.6,73.7)(0.13,0.1){3}{\line(1,0){0.13}}
\multiput(51.21,73.39)(0.13,0.1){3}{\line(1,0){0.13}}
\multiput(50.82,73.07)(0.13,0.11){3}{\line(1,0){0.13}}
\multiput(50.44,72.75)(0.13,0.11){3}{\line(1,0){0.13}}
\multiput(50.06,72.42)(0.13,0.11){3}{\line(1,0){0.13}}
\multiput(49.69,72.08)(0.12,0.11){3}{\line(1,0){0.12}}
\multiput(49.33,71.74)(0.12,0.11){3}{\line(1,0){0.12}}
\multiput(48.96,71.39)(0.12,0.12){3}{\line(1,0){0.12}}
\multiput(48.61,71.04)(0.12,0.12){3}{\line(1,0){0.12}}
\multiput(48.26,70.67)(0.12,0.12){3}{\line(0,1){0.12}}
\multiput(47.92,70.31)(0.11,0.12){3}{\line(0,1){0.12}}
\multiput(47.58,69.94)(0.11,0.12){3}{\line(0,1){0.12}}
\multiput(47.25,69.56)(0.11,0.13){3}{\line(0,1){0.13}}
\multiput(46.93,69.18)(0.11,0.13){3}{\line(0,1){0.13}}
\multiput(46.61,68.79)(0.11,0.13){3}{\line(0,1){0.13}}
\multiput(46.3,68.4)(0.1,0.13){3}{\line(0,1){0.13}}
\multiput(46,68)(0.1,0.13){3}{\line(0,1){0.13}}
\multiput(45.7,67.59)(0.15,0.2){2}{\line(0,1){0.2}}
\multiput(45.41,67.18)(0.15,0.2){2}{\line(0,1){0.2}}
\multiput(45.13,66.77)(0.14,0.21){2}{\line(0,1){0.21}}
\multiput(44.85,66.35)(0.14,0.21){2}{\line(0,1){0.21}}
\multiput(44.58,65.93)(0.13,0.21){2}{\line(0,1){0.21}}
\multiput(44.32,65.5)(0.13,0.21){2}{\line(0,1){0.21}}
\multiput(44.06,65.07)(0.13,0.22){2}{\line(0,1){0.22}}
\multiput(43.81,64.64)(0.12,0.22){2}{\line(0,1){0.22}}
\multiput(43.57,64.2)(0.12,0.22){2}{\line(0,1){0.22}}
\multiput(43.34,63.75)(0.12,0.22){2}{\line(0,1){0.22}}
\multiput(43.11,63.31)(0.11,0.22){2}{\line(0,1){0.22}}
\multiput(42.89,62.85)(0.11,0.23){2}{\line(0,1){0.23}}
\multiput(42.68,62.4)(0.11,0.23){2}{\line(0,1){0.23}}
\multiput(42.48,61.94)(0.1,0.23){2}{\line(0,1){0.23}}
\multiput(42.28,61.48)(0.1,0.23){2}{\line(0,1){0.23}}
\multiput(42.1,61.02)(0.09,0.23){2}{\line(0,1){0.23}}
\multiput(41.92,60.55)(0.09,0.23){2}{\line(0,1){0.23}}
\multiput(41.74,60.08)(0.17,0.47){1}{\line(0,1){0.47}}
\multiput(41.58,59.6)(0.16,0.47){1}{\line(0,1){0.47}}
\multiput(41.42,59.13)(0.16,0.48){1}{\line(0,1){0.48}}
\multiput(41.27,58.65)(0.15,0.48){1}{\line(0,1){0.48}}
\multiput(41.13,58.17)(0.14,0.48){1}{\line(0,1){0.48}}
\multiput(41,57.68)(0.13,0.48){1}{\line(0,1){0.48}}
\multiput(40.88,57.2)(0.12,0.49){1}{\line(0,1){0.49}}
\multiput(40.76,56.71)(0.12,0.49){1}{\line(0,1){0.49}}
\multiput(40.65,56.22)(0.11,0.49){1}{\line(0,1){0.49}}
\multiput(40.55,55.73)(0.1,0.49){1}{\line(0,1){0.49}}
\multiput(40.46,55.24)(0.09,0.49){1}{\line(0,1){0.49}}
\multiput(40.38,54.74)(0.08,0.49){1}{\line(0,1){0.49}}
\multiput(40.3,54.25)(0.08,0.5){1}{\line(0,1){0.5}}
\multiput(40.24,53.75)(0.07,0.5){1}{\line(0,1){0.5}}
\multiput(40.18,53.25)(0.06,0.5){1}{\line(0,1){0.5}}
\multiput(40.13,52.75)(0.05,0.5){1}{\line(0,1){0.5}}
\multiput(40.08,52.25)(0.04,0.5){1}{\line(0,1){0.5}}
\multiput(40.05,51.75)(0.03,0.5){1}{\line(0,1){0.5}}
\multiput(40.03,51.25)(0.03,0.5){1}{\line(0,1){0.5}}
\multiput(40.01,50.75)(0.02,0.5){1}{\line(0,1){0.5}}
\multiput(40,50.25)(0.01,0.5){1}{\line(0,1){0.5}}
\put(40,49.75){\line(0,1){0.5}}
\multiput(40,49.75)(0.01,-0.5){1}{\line(0,-1){0.5}}
\multiput(40.01,49.25)(0.02,-0.5){1}{\line(0,-1){0.5}}
\multiput(40.03,48.75)(0.03,-0.5){1}{\line(0,-1){0.5}}
\multiput(40.05,48.25)(0.03,-0.5){1}{\line(0,-1){0.5}}
\multiput(40.08,47.75)(0.04,-0.5){1}{\line(0,-1){0.5}}
\multiput(40.13,47.25)(0.05,-0.5){1}{\line(0,-1){0.5}}
\multiput(40.18,46.75)(0.06,-0.5){1}{\line(0,-1){0.5}}
\multiput(40.24,46.25)(0.07,-0.5){1}{\line(0,-1){0.5}}
\multiput(40.3,45.75)(0.08,-0.5){1}{\line(0,-1){0.5}}
\multiput(40.38,45.26)(0.08,-0.49){1}{\line(0,-1){0.49}}
\multiput(40.46,44.76)(0.09,-0.49){1}{\line(0,-1){0.49}}
\multiput(40.55,44.27)(0.1,-0.49){1}{\line(0,-1){0.49}}
\multiput(40.65,43.78)(0.11,-0.49){1}{\line(0,-1){0.49}}
\multiput(40.76,43.29)(0.12,-0.49){1}{\line(0,-1){0.49}}
\multiput(40.88,42.8)(0.12,-0.49){1}{\line(0,-1){0.49}}
\multiput(41,42.32)(0.13,-0.48){1}{\line(0,-1){0.48}}
\multiput(41.13,41.83)(0.14,-0.48){1}{\line(0,-1){0.48}}
\multiput(41.27,41.35)(0.15,-0.48){1}{\line(0,-1){0.48}}
\multiput(41.42,40.87)(0.16,-0.48){1}{\line(0,-1){0.48}}
\multiput(41.58,40.4)(0.16,-0.47){1}{\line(0,-1){0.47}}
\multiput(41.74,39.92)(0.17,-0.47){1}{\line(0,-1){0.47}}
\multiput(41.92,39.45)(0.09,-0.23){2}{\line(0,-1){0.23}}
\multiput(42.1,38.98)(0.09,-0.23){2}{\line(0,-1){0.23}}
\multiput(42.28,38.52)(0.1,-0.23){2}{\line(0,-1){0.23}}
\multiput(42.48,38.06)(0.1,-0.23){2}{\line(0,-1){0.23}}
\multiput(42.68,37.6)(0.11,-0.23){2}{\line(0,-1){0.23}}
\multiput(42.89,37.15)(0.11,-0.23){2}{\line(0,-1){0.23}}
\multiput(43.11,36.69)(0.11,-0.22){2}{\line(0,-1){0.22}}
\multiput(43.34,36.25)(0.12,-0.22){2}{\line(0,-1){0.22}}
\multiput(43.57,35.8)(0.12,-0.22){2}{\line(0,-1){0.22}}
\multiput(43.81,35.36)(0.12,-0.22){2}{\line(0,-1){0.22}}
\multiput(44.06,34.93)(0.13,-0.22){2}{\line(0,-1){0.22}}
\multiput(44.32,34.5)(0.13,-0.21){2}{\line(0,-1){0.21}}
\multiput(44.58,34.07)(0.13,-0.21){2}{\line(0,-1){0.21}}
\multiput(44.85,33.65)(0.14,-0.21){2}{\line(0,-1){0.21}}
\multiput(45.13,33.23)(0.14,-0.21){2}{\line(0,-1){0.21}}
\multiput(45.41,32.82)(0.15,-0.2){2}{\line(0,-1){0.2}}
\multiput(45.7,32.41)(0.15,-0.2){2}{\line(0,-1){0.2}}
\multiput(46,32)(0.1,-0.13){3}{\line(0,-1){0.13}}
\multiput(46.3,31.6)(0.1,-0.13){3}{\line(0,-1){0.13}}
\multiput(46.61,31.21)(0.11,-0.13){3}{\line(0,-1){0.13}}
\multiput(46.93,30.82)(0.11,-0.13){3}{\line(0,-1){0.13}}
\multiput(47.25,30.44)(0.11,-0.13){3}{\line(0,-1){0.13}}
\multiput(47.58,30.06)(0.11,-0.12){3}{\line(0,-1){0.12}}
\multiput(47.92,29.69)(0.11,-0.12){3}{\line(0,-1){0.12}}
\multiput(48.26,29.33)(0.12,-0.12){3}{\line(0,-1){0.12}}
\multiput(48.61,28.96)(0.12,-0.12){3}{\line(1,0){0.12}}
\multiput(48.96,28.61)(0.12,-0.12){3}{\line(1,0){0.12}}
\multiput(49.33,28.26)(0.12,-0.11){3}{\line(1,0){0.12}}
\multiput(49.69,27.92)(0.12,-0.11){3}{\line(1,0){0.12}}
\multiput(50.06,27.58)(0.13,-0.11){3}{\line(1,0){0.13}}
\multiput(50.44,27.25)(0.13,-0.11){3}{\line(1,0){0.13}}
\multiput(50.82,26.93)(0.13,-0.11){3}{\line(1,0){0.13}}
\multiput(51.21,26.61)(0.13,-0.1){3}{\line(1,0){0.13}}
\multiput(51.6,26.3)(0.13,-0.1){3}{\line(1,0){0.13}}
\multiput(52,26)(0.2,-0.15){2}{\line(1,0){0.2}}
\multiput(52.41,25.7)(0.2,-0.15){2}{\line(1,0){0.2}}
\multiput(52.82,25.41)(0.21,-0.14){2}{\line(1,0){0.21}}
\multiput(53.23,25.13)(0.21,-0.14){2}{\line(1,0){0.21}}
\multiput(53.65,24.85)(0.21,-0.13){2}{\line(1,0){0.21}}
\multiput(54.07,24.58)(0.21,-0.13){2}{\line(1,0){0.21}}
\multiput(54.5,24.32)(0.22,-0.13){2}{\line(1,0){0.22}}
\multiput(54.93,24.06)(0.22,-0.12){2}{\line(1,0){0.22}}
\multiput(55.36,23.81)(0.22,-0.12){2}{\line(1,0){0.22}}
\multiput(55.8,23.57)(0.22,-0.12){2}{\line(1,0){0.22}}
\multiput(56.25,23.34)(0.22,-0.11){2}{\line(1,0){0.22}}
\multiput(56.69,23.11)(0.23,-0.11){2}{\line(1,0){0.23}}
\multiput(57.15,22.89)(0.23,-0.11){2}{\line(1,0){0.23}}
\multiput(57.6,22.68)(0.23,-0.1){2}{\line(1,0){0.23}}
\multiput(58.06,22.48)(0.23,-0.1){2}{\line(1,0){0.23}}
\multiput(58.52,22.28)(0.23,-0.09){2}{\line(1,0){0.23}}
\multiput(58.98,22.1)(0.23,-0.09){2}{\line(1,0){0.23}}
\multiput(59.45,21.92)(0.47,-0.17){1}{\line(1,0){0.47}}
\multiput(59.92,21.74)(0.47,-0.16){1}{\line(1,0){0.47}}
\multiput(60.4,21.58)(0.48,-0.16){1}{\line(1,0){0.48}}
\multiput(60.87,21.42)(0.48,-0.15){1}{\line(1,0){0.48}}
\multiput(61.35,21.27)(0.48,-0.14){1}{\line(1,0){0.48}}
\multiput(61.83,21.13)(0.48,-0.13){1}{\line(1,0){0.48}}
\multiput(62.32,21)(0.49,-0.12){1}{\line(1,0){0.49}}
\multiput(62.8,20.88)(0.49,-0.12){1}{\line(1,0){0.49}}
\multiput(63.29,20.76)(0.49,-0.11){1}{\line(1,0){0.49}}
\multiput(63.78,20.65)(0.49,-0.1){1}{\line(1,0){0.49}}
\multiput(64.27,20.55)(0.49,-0.09){1}{\line(1,0){0.49}}
\multiput(64.76,20.46)(0.49,-0.08){1}{\line(1,0){0.49}}
\multiput(65.26,20.38)(0.5,-0.08){1}{\line(1,0){0.5}}
\multiput(65.75,20.3)(0.5,-0.07){1}{\line(1,0){0.5}}
\multiput(66.25,20.24)(0.5,-0.06){1}{\line(1,0){0.5}}
\multiput(66.75,20.18)(0.5,-0.05){1}{\line(1,0){0.5}}
\multiput(67.25,20.13)(0.5,-0.04){1}{\line(1,0){0.5}}
\multiput(67.75,20.08)(0.5,-0.03){1}{\line(1,0){0.5}}
\multiput(68.25,20.05)(0.5,-0.03){1}{\line(1,0){0.5}}
\multiput(68.75,20.03)(0.5,-0.02){1}{\line(1,0){0.5}}
\multiput(69.25,20.01)(0.5,-0.01){1}{\line(1,0){0.5}}
\put(69.75,20){\line(1,0){0.5}}
\multiput(70.25,20)(0.5,0.01){1}{\line(1,0){0.5}}
\multiput(70.75,20.01)(0.5,0.02){1}{\line(1,0){0.5}}
\multiput(71.25,20.03)(0.5,0.03){1}{\line(1,0){0.5}}
\multiput(71.75,20.05)(0.5,0.03){1}{\line(1,0){0.5}}
\multiput(72.25,20.08)(0.5,0.04){1}{\line(1,0){0.5}}
\multiput(72.75,20.13)(0.5,0.05){1}{\line(1,0){0.5}}
\multiput(73.25,20.18)(0.5,0.06){1}{\line(1,0){0.5}}
\multiput(73.75,20.24)(0.5,0.07){1}{\line(1,0){0.5}}
\multiput(74.25,20.3)(0.5,0.08){1}{\line(1,0){0.5}}
\multiput(74.74,20.38)(0.49,0.08){1}{\line(1,0){0.49}}
\multiput(75.24,20.46)(0.49,0.09){1}{\line(1,0){0.49}}
\multiput(75.73,20.55)(0.49,0.1){1}{\line(1,0){0.49}}
\multiput(76.22,20.65)(0.49,0.11){1}{\line(1,0){0.49}}
\multiput(76.71,20.76)(0.49,0.12){1}{\line(1,0){0.49}}
\multiput(77.2,20.88)(0.49,0.12){1}{\line(1,0){0.49}}
\multiput(77.68,21)(0.48,0.13){1}{\line(1,0){0.48}}
\multiput(78.17,21.13)(0.48,0.14){1}{\line(1,0){0.48}}
\multiput(78.65,21.27)(0.48,0.15){1}{\line(1,0){0.48}}
\multiput(79.13,21.42)(0.48,0.16){1}{\line(1,0){0.48}}
\multiput(79.6,21.58)(0.47,0.16){1}{\line(1,0){0.47}}
\multiput(80.08,21.74)(0.47,0.17){1}{\line(1,0){0.47}}
\multiput(80.55,21.92)(0.23,0.09){2}{\line(1,0){0.23}}
\multiput(81.02,22.1)(0.23,0.09){2}{\line(1,0){0.23}}
\multiput(81.48,22.28)(0.23,0.1){2}{\line(1,0){0.23}}
\multiput(81.94,22.48)(0.23,0.1){2}{\line(1,0){0.23}}
\multiput(82.4,22.68)(0.23,0.11){2}{\line(1,0){0.23}}
\multiput(82.85,22.89)(0.23,0.11){2}{\line(1,0){0.23}}
\multiput(83.31,23.11)(0.22,0.11){2}{\line(1,0){0.22}}
\multiput(83.75,23.34)(0.22,0.12){2}{\line(1,0){0.22}}
\multiput(84.2,23.57)(0.22,0.12){2}{\line(1,0){0.22}}
\multiput(84.64,23.81)(0.22,0.12){2}{\line(1,0){0.22}}
\multiput(85.07,24.06)(0.22,0.13){2}{\line(1,0){0.22}}
\multiput(85.5,24.32)(0.21,0.13){2}{\line(1,0){0.21}}
\multiput(85.93,24.58)(0.21,0.13){2}{\line(1,0){0.21}}
\multiput(86.35,24.85)(0.21,0.14){2}{\line(1,0){0.21}}
\multiput(86.77,25.13)(0.21,0.14){2}{\line(1,0){0.21}}
\multiput(87.18,25.41)(0.2,0.15){2}{\line(1,0){0.2}}
\multiput(87.59,25.7)(0.2,0.15){2}{\line(1,0){0.2}}
\multiput(88,26)(0.13,0.1){3}{\line(1,0){0.13}}
\multiput(88.4,26.3)(0.13,0.1){3}{\line(1,0){0.13}}
\multiput(88.79,26.61)(0.13,0.11){3}{\line(1,0){0.13}}
\multiput(89.18,26.93)(0.13,0.11){3}{\line(1,0){0.13}}
\multiput(89.56,27.25)(0.13,0.11){3}{\line(1,0){0.13}}
\multiput(89.94,27.58)(0.12,0.11){3}{\line(1,0){0.12}}
\multiput(90.31,27.92)(0.12,0.11){3}{\line(1,0){0.12}}
\multiput(90.67,28.26)(0.12,0.12){3}{\line(1,0){0.12}}
\multiput(91.04,28.61)(0.12,0.12){3}{\line(1,0){0.12}}
\multiput(91.39,28.96)(0.12,0.12){3}{\line(0,1){0.12}}
\multiput(91.74,29.33)(0.11,0.12){3}{\line(0,1){0.12}}
\multiput(92.08,29.69)(0.11,0.12){3}{\line(0,1){0.12}}
\multiput(92.42,30.06)(0.11,0.13){3}{\line(0,1){0.13}}
\multiput(92.75,30.44)(0.11,0.13){3}{\line(0,1){0.13}}
\multiput(93.07,30.82)(0.11,0.13){3}{\line(0,1){0.13}}
\multiput(93.39,31.21)(0.1,0.13){3}{\line(0,1){0.13}}
\multiput(93.7,31.6)(0.1,0.13){3}{\line(0,1){0.13}}
\multiput(94,32)(0.15,0.2){2}{\line(0,1){0.2}}
\multiput(94.3,32.41)(0.15,0.2){2}{\line(0,1){0.2}}
\multiput(94.59,32.82)(0.14,0.21){2}{\line(0,1){0.21}}
\multiput(94.87,33.23)(0.14,0.21){2}{\line(0,1){0.21}}
\multiput(95.15,33.65)(0.13,0.21){2}{\line(0,1){0.21}}
\multiput(95.42,34.07)(0.13,0.21){2}{\line(0,1){0.21}}
\multiput(95.68,34.5)(0.13,0.22){2}{\line(0,1){0.22}}
\multiput(95.94,34.93)(0.12,0.22){2}{\line(0,1){0.22}}
\multiput(96.19,35.36)(0.12,0.22){2}{\line(0,1){0.22}}
\multiput(96.43,35.8)(0.12,0.22){2}{\line(0,1){0.22}}
\multiput(96.66,36.25)(0.11,0.22){2}{\line(0,1){0.22}}
\multiput(96.89,36.69)(0.11,0.23){2}{\line(0,1){0.23}}
\multiput(97.11,37.15)(0.11,0.23){2}{\line(0,1){0.23}}
\multiput(97.32,37.6)(0.1,0.23){2}{\line(0,1){0.23}}
\multiput(97.52,38.06)(0.1,0.23){2}{\line(0,1){0.23}}
\multiput(97.72,38.52)(0.09,0.23){2}{\line(0,1){0.23}}
\multiput(97.9,38.98)(0.09,0.23){2}{\line(0,1){0.23}}
\multiput(98.08,39.45)(0.17,0.47){1}{\line(0,1){0.47}}
\multiput(98.26,39.92)(0.16,0.47){1}{\line(0,1){0.47}}
\multiput(98.42,40.4)(0.16,0.48){1}{\line(0,1){0.48}}
\multiput(98.58,40.87)(0.15,0.48){1}{\line(0,1){0.48}}
\multiput(98.73,41.35)(0.14,0.48){1}{\line(0,1){0.48}}
\multiput(98.87,41.83)(0.13,0.48){1}{\line(0,1){0.48}}
\multiput(99,42.32)(0.12,0.49){1}{\line(0,1){0.49}}
\multiput(99.12,42.8)(0.12,0.49){1}{\line(0,1){0.49}}
\multiput(99.24,43.29)(0.11,0.49){1}{\line(0,1){0.49}}
\multiput(99.35,43.78)(0.1,0.49){1}{\line(0,1){0.49}}
\multiput(99.45,44.27)(0.09,0.49){1}{\line(0,1){0.49}}
\multiput(99.54,44.76)(0.08,0.49){1}{\line(0,1){0.49}}
\multiput(99.62,45.26)(0.08,0.5){1}{\line(0,1){0.5}}
\multiput(99.7,45.75)(0.07,0.5){1}{\line(0,1){0.5}}
\multiput(99.76,46.25)(0.06,0.5){1}{\line(0,1){0.5}}
\multiput(99.82,46.75)(0.05,0.5){1}{\line(0,1){0.5}}
\multiput(99.87,47.25)(0.04,0.5){1}{\line(0,1){0.5}}
\multiput(99.92,47.75)(0.03,0.5){1}{\line(0,1){0.5}}
\multiput(99.95,48.25)(0.03,0.5){1}{\line(0,1){0.5}}
\multiput(99.97,48.75)(0.02,0.5){1}{\line(0,1){0.5}}
\multiput(99.99,49.25)(0.01,0.5){1}{\line(0,1){0.5}}

\linethickness{0.3mm}
\qbezier(70,20)(71.37,20.26)(81.74,26.04)
\qbezier(81.74,26.04)(92.12,31.82)(95,40)
\qbezier(95,40)(96.64,50.75)(90.87,61.83)
\qbezier(90.87,61.83)(85.1,72.91)(75,75)
\qbezier(75,75)(63.91,75.64)(54.18,65.88)
\qbezier(54.18,65.88)(44.46,56.13)(45,45)
\qbezier(45,45)(46.4,37.86)(54.22,34.08)
\qbezier(54.22,34.08)(62.04,30.3)(70,30)
\qbezier(70,30)(76.02,29.77)(81.97,32.24)
\qbezier(81.97,32.24)(87.92,34.71)(90,40)
\qbezier(90,40)(91.48,45.44)(88.12,50.86)
\qbezier(88.12,50.86)(84.76,56.27)(80,60)
\qbezier(80,60)(75.69,63.55)(70.11,65.53)
\qbezier(70.11,65.53)(64.54,67.51)(60,65)
\qbezier(60,65)(54.49,61.07)(52.82,53.24)
\qbezier(52.82,53.24)(51.14,45.4)(55,40)
\qbezier(55,40)(58,37.49)(63.68,38.57)
\qbezier(63.68,38.57)(69.36,39.64)(70,40)
\linethickness{0.3mm}
\multiput(70,20)(0,1.9){11}{\line(0,1){0.95}}
\put(75,35){\makebox(0,0)[cc]{c}}

\put(90,55){\makebox(0,0)[cc]{c'}}

\end{picture}

A diffeomorphism $f$ which sends the curve $c$ to the curve $c'$ satisfies $\rho (f)=2$}}

\section{Quasimorphisms on $\mathrm{Diff}_{0}^{r}(\mathbb{A})$}

\begin{definition}
Given a group $G$, a homogeneous quasi-morphism on $G$ is a map $q:G \rightarrow \mathbb{R}$ which satisfies:
\begin{enumerate}
\item $\exists D>0, \ \forall a,b \in G, \ \left|q(ab)-q(a)-q(b)\right|\leq D$.
\item $\forall a \in G, \ \forall n \in \mathbb{Z}, q(a^{n})=nq(a)$.
\end{enumerate}
The least constant $D(q)$ which satisfies 1) is called the defect of the quasimorphism $q$.
The trivial quasimorphism is the quasimorphism which maps every element of $G$ to $0$.
\end{definition}

A first connection between commutator length and quasi-morphisms is given by the following classical lemma:

\begin{lemma} \label{qmcl}
Suppose $G$ is perfect. If $G$ admits a non-trivial homogeneous quasi-morphism $q$, then $cl_{G}$ is unbounded. Moreover,
$$ \forall g \in G, |q(g)| \leq 2 cl_{G}(g)D(q).$$
\end{lemma}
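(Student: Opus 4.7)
The plan is to establish the quantitative inequality $|q(g)| \leq 2\, cl_{G}(g)\, D(q)$ first; the unboundedness of $cl_{G}$ will then follow by applying this to powers of a non-trivial element. The argument is classical and proceeds in three main steps, the key observation being that any homogeneous quasi-morphism is automatically conjugation-invariant. Preliminarily, specialising the homogeneity axiom to $n = 0$ and $n = -1$ gives $q(e) = 0$ and $q(h^{-1}) = -q(h)$.

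The conjugation invariance $q(hgh^{-1}) = q(g)$ is proved by evaluating $q\bigl((hgh^{-1})^{n}\bigr) = q(hg^{n}h^{-1})$ in two ways. Homogeneity turns the left side into $n\, q(hgh^{-1})$, while applying the defect inequality twice on the right yields
\[
\bigl| q(hg^{n}h^{-1}) - q(h) - q(g^{n}) - q(h^{-1}) \bigr| \leq 2 D(q),
\]
and the contributions of $q(h)$ and $q(h^{-1})$ cancel. Dividing by $n$ and letting $n \to \infty$ gives the claim.

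Next, I would bound $|q([a,b])| \leq D(q)$ for any single commutator. Indeed, by the defect,
\[
\bigl| q(aba^{-1}b^{-1}) - q(aba^{-1}) - q(b^{-1}) \bigr| \leq D(q),
\]
and the two subtracted terms sum to zero since $q(aba^{-1}) = q(b)$ (conjugation invariance) and $q(b^{-1}) = -q(b)$. Iterating the defect on a product of $k$ commutators $g = c_{1} \cdots c_{k}$ gives
\[
|q(g)| \leq \sum_{i=1}^{k} |q(c_{i})| + (k-1) D(q) \leq k D(q) + (k-1) D(q) \leq 2 k D(q),
\]
which is precisely the claimed inequality with $k = cl_{G}(g)$.

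For the unboundedness statement, since $q$ is non-trivial pick $g \in G$ with $q(g) \neq 0$; then by homogeneity $|q(g^{n})| = n |q(g)| \to \infty$, and the inequality just proved forces $cl_{G}(g^{n}) \geq n|q(g)|/(2 D(q)) \to \infty$. No step is genuinely difficult; the place that requires care is the conjugation invariance, since without it the commutator bound would not close, and that is really the only non-formal input used in the argument.
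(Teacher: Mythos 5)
Your proof is correct and follows essentially the same route as the paper: conjugation invariance of $q$ via homogeneity and the defect, then the bound $|q(g)|\leq 2\,cl_{G}(g)D(q)$, applied to powers $g^{n}$ to get unboundedness. The only cosmetic difference is that you bound each commutator by $D(q)$ first and then sum, while the paper bounds the product of the $2n$ factors $g_{i}$ and $h_{i}g_{i}^{-1}h_{i}^{-1}$ in one pass; both give the same $(2k-1)D(q)\leq 2kD(q)$ estimate.
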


\begin{proof}
First, we show that $q$ is conjugation invariant. Indeed, for elements $g$ and $h$ in $G$:
$$|q(hgh^{-1})-q(g)| =\frac{1}{n}|q(hg^{n}h^{-1})-q(g^{n})| \leq \frac{1}{n}D(q)$$
and $q(hgh^{-1})=q(g)$.
Let $g$ be an element of $G$ with commutator length $cl_{G}(g)=n$. Then $g$ can be written the following way:
$$g=[g_{1},h_{1}][g_{2},h_{2}] \ldots [g_{n},h_{n}],$$
where the $g_{i}$ and the $h_{i}$ lie in $G$. Then,
$$\left|q(g)- \sum_{i=1}^{n}(q(g_{i})+q(h_{i}g_{i}^{-1}h_{i}^{-1})) \right| \leq (2n-1) D(q)$$
and, as $q(h_{i}g_{i}^{-1}h_{i}^{-1})=-q(g_{i})$,
$$\left|q(g)\right| \leq 2 cl_{G}(g) D(q)$$
for all $g$ in $G$. Therefore, for every natural integer $p$,
$$\left|q(g^{p})\right|=p\left|q(g)\right| \leq 2 cl_{G}(g^{p}) D(q).$$
Hence, $cl_{G}(g^{p})\rightarrow +\infty$ when $p \rightarrow + \infty$ as soon as $q(g) \neq 0$.
\end{proof}

\bigskip

Quasi-morphisms on $G$ are in fact more closely related to commutator length via the Bavard duality (see \cite{Bav} and \cite{Ca}). A typical example is the \emph{translation number} on the group $\mathrm{Homeo}_{\mathbb{Z}}(\mathbb{R})$ of homeomorphisms of $\mathbb{R}$ which commute with integral translations (which is also the group of lifts of orientation-preserving homeomorphisms of the circle): for every homeomorphism $f$ in $\mathrm{Homeo}_{\mathbb{Z}}(\mathbb{R})$, the sequence $(\frac{f^{n}(x)-x}{n})_{n}$ converges for any $x$ and the limit is independant of the chosen point $x$. This limit is called the translation number of $f$. It actually defines a quasi-morphism on the group $\mathrm{Homeo}_{\mathbb{Z}}(\mathbb{R})$ (see \cite{Gh} for more information on the translation number).
\bigskip

We now consider the closed annulus $\mathbb{A}=\mathbb{R}/\mathbb{Z} \times [0,1]$. We build a quasi-morphism on the identity component $\mathrm{Homeo}_{0}(\mathbb{A})$ of the group of homeomorphisms of $\mathbb{A}$. The following construction is due to Frederic Le Roux. We denote by $\pi : \mathbb{R} \times[0,1] \rightarrow \mathbb{A}$ the universal covering of $\mathbb{A}$. Given a homeomorphism $f$ in $\mathrm{Homeo}_{0}(\mathbb{A})$, consider a lift $F:\mathbb{R} \times [0,1] \rightarrow \mathbb{R}\times [0,1]$ of $f$, \textit{i.e.} a homeomorphism of $\mathbb{R} \times [0,1]$ which satisfies $\pi \circ F=f \circ \pi$. The maps $F_{0}=F_{\mathbb{R} \times\left\{0\right\}}$ and $F_{1}=F_{\mathbb{R} \times \left\{1\right\}}$ belong to the group $\mathrm{Homeo}_{\mathbb{Z}}(\mathbb{R})$. If we replace $F$ by some other lift $F+k$, then both translation numbers of $F_{0}$ and $F_{1}$ will increase by $k$. Thus the difference of the translation numbers of these homeomorphisms is independant of the lift $F$ chosen. We denote this number by $\rho(f)$ and call it the \emph{torsion number}. As the translation number is a homogeneous quasi-morphism on the group $\mathrm{Homeo}_{\mathbb{Z}}(\mathbb{R})$, $\rho$ is a (nontrivial) homogeneous quasi-morphism on $\mathrm{Homeo}_{0}(\mathbb{A})$ which restricts to a non-trivial quasi-morphism on $\mathrm{Diff}_{0}^{r}(\mathbb{A})$ for every $r$ in $\mathbb{N} \cup \left\{ \infty \right\}$.

We first reduce Theorem \ref{qm} to the following proposition, which gives estimates on the commutator length. We note $E$ the lower integer part and, for an element $f$ in $\mathrm{Diff}_{0}^{r}(\mathbb{A})$:
$$\alpha(f)= \min_{r \in \mathbb{R}} \left|F_{1}(r)- F_{0}(r) \right|,$$ 
where $F$ is a lift of $f$ (note that this quantity is independant of the choice of the lift).

\begin{proposition} \label{cl}
Let $f \neq Id_{\mathbb{A}}$ be a homeomorphism in $\mathrm{Diff}_{0}^{r}(\mathbb{A})$ and $F$ be a lift of $f$. Then, for any $r \neq 2,3$: 
$$E \left( \frac{\alpha(f)+3}{4} \right) +9 \ \geq \ cl_{r}(f) \ \geq \ E \left( \frac{\alpha(f)}{4} \right) +1.$$
If $r=0$, then the $9$ appearing in the upper bound may be improved to $5$.
\end{proposition}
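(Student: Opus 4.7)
The plan is to establish the lower and upper bounds by independent arguments, both exploiting the quasi-morphism $\rho$ constructed above.

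For the lower bound $cl_r(f) \geq E(\alpha(f)/4) + 1$, the approach is to apply Lemma \ref{qmcl} to $\rho$, but with two refinements. First, the defect $D(\rho)$ is at most $2$, since $\rho$ is the difference of two translation numbers on $\mathrm{Homeo}_{\mathbb{Z}}(\mathbb{R})$, each a quasi-morphism of defect $1$. Second, a single commutator $c$ satisfies $|\rho(c)|\leq 2$: this is the standard bound $|q([a,b])|\leq D(q)$ for any homogeneous quasi-morphism, obtained from conjugation invariance as in the proof of Lemma \ref{qmcl}. Combining these two facts for a product of $n$ commutators yields $|\rho(f)|\leq 4\,cl_r(f)-2$. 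I will then use the classical Poincaré bound $|F_i(r)-r-\tau(F_i)|\leq 1$ for lifts in $\mathrm{Homeo}_{\mathbb{Z}}(\mathbb{R})$ to relate $\alpha(f)$ to $|\rho(f)|$, and carefully track the strict versus non-strict inequalities together with the trivial observation $cl_r(f)\geq 1$ for $f\neq \mathrm{Id}$ to recover exactly the form $E(\alpha(f)/4)+1$.

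For the upper bound, the strategy is constructive. I plan to exhibit first an explicit commutator $T=[R,S]\in\mathrm{Diff}_0^r(\mathbb{A})$ with $\rho(T)=4$, built by taking $R$ to be a rotation-like map in the angular direction and $S$ a $C^r$ diffeomorphism supported in a bump along the radial coordinate, arranged so that the torsion on each boundary is as prescribed. Iterating, $T^k$ is a product of $k$ commutators (with possibly one additional commutator to absorb defect slack). Given an arbitrary $f$ with large $\alpha(f)$, multiplying by $T^{-k}$ for $k=E((\alpha(f)+3)/4)$ produces a residue $g=T^{-k}f$ with $\alpha(g)\leq 3$. The remaining task is to bound $cl_r(g)$ by an absolute constant $9$, respectively $5$ in the $C^0$ case.

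The main obstacle is this final step, namely showing $cl_r(g)\leq 9$ whenever $\alpha(g)\leq 3$. The idea is that small relative displacement between the two boundary circles forces the existence of an essential arc (or an isotopy-preserved essential curve) whose image under $g$ remains controllably close to itself, and one can use this to cut $\mathbb{A}$ and fragment $g$ as a uniformly bounded product of factors each supported in an embedded topological disk, i.e., a portable subset of $\mathbb{A}$. Each such factor is then a product of at most two commutators by the Burago-Ivanov-Polterovich argument for portable manifolds recalled in the introduction, and summing these uniform counts produces the constants $9$ and $5$. The hypothesis $r\neq 2,3$ enters at the fragmentation step, being exactly the regularity range $r\neq\dim(\mathbb{A}),\dim(\mathbb{A})+1$ in which the Mather-Thurston fragmentation and perfectness theorems are available; the better constant for $r=0$ comes from the greater flexibility of isotopies of homeomorphisms, which allows some auxiliary commutators used in the smooth construction to be dispensed with.
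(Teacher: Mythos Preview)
Your lower bound is fine and is a mild rephrasing of the paper's argument: the paper applies the Eisenbud--Hirsch--Neumann theorem to each boundary restriction $F_0,F_1$ separately (getting $\min_x|F_i(x)-x|<2n-1$), whereas you go through $\rho$ and the Poincar\'e bound. With the strict inequality $|F(x)-x-\tau(F)|<1$ (which follows from $\max(F-\mathrm{id})-\min(F-\mathrm{id})<1$ for any $F\in\mathrm{Homeo}_{\mathbb Z}(\mathbb R)$) and $|\rho(f)|\leq (2n-1)D(\rho)\leq 4n-2$, you do get $\alpha(f)<4n$, hence $n\geq E(\alpha(f)/4)+1$. So that half is sound.

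The upper bound, however, contains an internal contradiction that makes the whole strategy collapse. You correctly observe that any single commutator $c$ satisfies $|\rho(c)|\leq D(\rho)\leq 2$, and then two paragraphs later you propose to ``exhibit an explicit commutator $T=[R,S]$ with $\rho(T)=4$''. No such $T$ exists. If you retreat to the best possible value $\rho(T)=2$, then $T^{k}$ costs $k$ commutators but only moves $\rho$ by $2k$, so reducing $\alpha(f)$ to a bounded quantity needs roughly $\alpha(f)/2$ commutators, not $\alpha(f)/4$; you cannot reach the stated constant this way. Moreover, even granting some $T$, the passage from ``$\rho(T^{-k}f)$ bounded'' to ``$\alpha(T^{-k}f)\leq 3$'' loses further additive constants through the defect of $\rho$ and the $\alpha$--$\rho$ discrepancy, so the bookkeeping does not close.

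The paper avoids this obstruction by \emph{not} using a universal commutator. Instead it applies the converse direction of Eisenbud--Hirsch--Neumann to $F_0$ and $F_1$ \emph{simultaneously}: after choosing the lift so that both $|F_0(x_0)-x_0|$ and $|F_1(x_0)-x_0|$ are below $2k-1$ with $k=E((\alpha(f)+3)/4)+1$, each $F_j$ is a product of $k$ commutators in $\mathrm{Homeo}_{\mathbb Z}(\mathbb R)$, and these are then extended to $k$ commutators on $\mathbb A$ whose product matches $f$ exactly on the boundary. The point is that each of these tailored commutators can unwind up to $2$ on \emph{each} boundary circle at once, which is where the factor $4$ comes from; a fixed $T$ only sees the difference $\rho$ and is capped at $2$. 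After this step the residue has a lift fixing $\partial\mathbb A$ pointwise, and the paper finishes with an explicit Sch\"onflies/portable-manifold argument giving $4$ further commutators ($r=0$), plus a $C^0$-fragmentation/approximation step adding $4$ more in the smooth case. Your sketch of the endgame (``essential arc\ldots fragment into portable pieces'') is in the right spirit but is not where the difficulty lies; the real issue is the reduction step, which needs the EHN construction rather than powers of a single commutator.
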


This proposition will be shown in the next section. Now, we deduce the first part of Theorem \ref{qi} and Theorem \ref{qm} from Proposition \ref{cl}. 

\begin{proof}[Proof of the first part of Theorem \ref{qi}]
It suffices to show that, for every element $f$ in $\mathrm{Diff}_{0}^{r}(\mathbb{A})$:
$$\left| |\rho(f)| - \alpha(f) \right| \leq 2.$$
Take a real number $r_{0}$ in $\mathbb{R}$ such that $\alpha(f)=\left| F_{1}(r_{0}) - F_{0}(r_{0}) \right|$, where $F$ is a lift of $f$.
We will prove that :
$$\left| \rho(f) - F_{1}(r_{0}) - F_{0}(r_{0}) \right| \leq 2.$$
For every homeomorphisms $G$ and $H$ in $\mathrm{Homeo}_{\mathbb{Z}}(\mathbb{R})$, we have the following classical inequality:
$$\forall r \in \mathbb{R}, \ \left| (G(H(r))-r)-(G(r)-r)-(H(r)-r) \right| \leq 1.$$
Using this formula, we obtain by induction, for $i\in \left\{0,1 \right\}$:
$$ \forall n \in \mathbb{N}, \ \left| F_{i}^{n}(r_{0})-nF_{i}(r_{0})+(n-1)r_{0} \right| \leq n-1$$
and $\left| F_{1}^{n}(r_{0})-F_{0}^{n}(r_{0})-n(F_{1}(r_{0})-F_{0}(r_{0})) \right| \leq 2n-2.$
Dividing by $n$ and taking the limit $n \rightarrow +\infty$ allows us to conclude the proof.
\end{proof}

\bigskip

\begin{proof}[Proof of Theorem \ref{qm}]
The strategy of the proof is the same as in the case of the translation number on $\mathrm{Homeo}_{\mathbb{Z}}(\mathbb{R})$ (see \cite{Gh}). Let $q$ be a homogeneous quasi-morphism on $\mathrm{Diff}_{0}^{r}(\mathbb{A})$. Let $t$ be a $C^{\infty}$-diffeomorphism in $\mathrm{Diff}_{0}^{\infty}(\mathbb{A})$ which is a "twist": there exists a lift $T$ of $t$ which satisfies:
$$\forall x \in \mathbb{R},\left\{
\begin{array}{l}
 T(x,0)=(x,0). \\
 T(x,1)=(x+1,1).
\end{array} 
\right.
$$
(For instance, one can take $t(x,r)=(x+r,r)$.)
Then $q-q(t)\rho$ is a homogeneous quasi-morphism which vanishes on $t$ and the next lemma allows us to complete the proof of Theorem \ref{qm}.
\end{proof}

\underline{Remark}: observe that $t$ fixes the boundary but there is no continuous path of diffeomorphisms fixing the boundary between the identity and $t$.

\begin{lemma} \label{t}
Every homogeneous quasi-morphism on $\mathrm{Diff}_{0}^{r}(\mathbb{A})$ which vanishes on $t$ is trivial, if $r \neq 2,3$.
\end{lemma}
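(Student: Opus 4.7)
The plan is to mimic the classical argument showing that every homogeneous quasi-morphism on $\mathrm{Homeo}_{\mathbb{Z}}(\mathbb{R})$ vanishing on the integer translation is trivial. Three ingredients drive the proof: first, the torsion number $\rho(t)=1$, read off directly from the definition of the lift $T$ (translation number $0$ on $\mathbb{R}\times\{0\}$ and $1$ on $\mathbb{R}\times\{1\}$); second, the upper bound on commutator length provided by Proposition \ref{cl}, which requires $r \neq 2,3$; and third, Lemma \ref{qmcl}, which converts such a commutator length bound into a bound on $|q|$.

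Fix $f \in \mathrm{Diff}_{0}^{r}(\mathbb{A})$. For each $n \in \mathbb{N}$ I would choose an integer $k_{n}$ with $|k_{n} - n\rho(f)| \leq 1$ and consider the element $g_{n} := f^{n} t^{-k_{n}}$. Since $\rho$ is a homogeneous quasi-morphism of some defect $D(\rho)$ and $\rho(t) = 1$, one computes
\[|\rho(g_{n})| \leq |n\rho(f) - k_{n}| + D(\rho) \leq 1 + D(\rho).\]
Combining this with the inequality $\bigl|\,|\rho(g)| - \alpha(g)\,\bigr| \leq 2$ established in the proof of Theorem \ref{qi} yields $\alpha(g_{n}) \leq 3 + D(\rho)$. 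Plugging this into the upper bound of Proposition \ref{cl} bounds $cl_{r}(g_{n})$ by a constant $C$ independent of $n$ and $f$, and Lemma \ref{qmcl} then gives $|q(g_{n})| \leq 2CD(q)$.

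The conclusion follows by using the defect of $q$ together with homogeneity and the hypothesis $q(t)=0$:
\[|q(g_{n}) - nq(f)| = |q(g_{n}) - q(f^{n}) - q(t^{-k_{n}})| \leq D(q),\]
since $q(f^{n}) = nq(f)$ and $q(t^{-k_{n}}) = -k_{n} q(t) = 0$. Combining with the previous estimate gives $n|q(f)| \leq (2C+1)D(q)$ for every $n \in \mathbb{N}$. Dividing by $n$ and letting $n \to \infty$ forces $q(f) = 0$, as desired.

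The main obstacle, and the only substantive input, is Proposition \ref{cl}; everything else is formal manipulation of the quasi-morphism axioms and homogeneity, combined with the arithmetic of the torsion number. This is also precisely where the hypothesis $r \neq 2, 3$ enters, both through the upper bound on $cl_{r}$ in Proposition \ref{cl} and through the perfectness of $\mathrm{Diff}_{0}^{r}(\mathbb{A})$ needed to invoke Lemma \ref{qmcl}.
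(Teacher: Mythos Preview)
Your proof is correct and follows essentially the same approach as the paper: correct by a power of $t$ to make $\alpha$ bounded, invoke Proposition~\ref{cl} for a uniform commutator-length bound, and feed this into Lemma~\ref{qmcl}. The paper's version is marginally more direct in that it skips the passage through $f^{n}$ and the limit: it picks a single $n_{0}$ with $\alpha(t^{n_{0}}f)<1$, obtains $|q(f)|\leq 19\,D(q)$ for every $f$, and then observes that a bounded homogeneous quasi-morphism is identically zero.
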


\begin{proof}[Proof of lemma \ref{t}]
Let $q$ be such a quasi-morphism. Then, for every integer $n \in \mathbb{Z}$ and every homeomorphism $f$ in $\mathrm{Diff}_{0}^{r}(\mathbb{A})$, we have
$$\left|q(t^{n}f)-q(f) \right| \leq D(q).$$
Let us fix $f$ in $\mathrm{Diff}_{0}^{r}(\mathbb{A})$  and choose an integer $n_{0}$ such that $\alpha(t^{n_{0}}f)<1$ (if $\alpha(f)= \min_{r \in \mathbb{R}} (F_{1}(r)- F_{0}(r))$, then $\alpha(t^{p}f)=\alpha(f)+p$ as long as $p>-\alpha(f)$ and the other cases are similar or easier). Then, by proposition \ref{cl}, the homeomorphism $t^{n_{0}}f$ may be written as a product of at most $9$ commutators. Therefore, using lemma 1.2:
$$\left|q(t^{n_{0}}f)\right| \leq 18D(q)$$
and $\left|q(f) \right| \leq 19D(q)$. Hence, $q$ is a bounded homogeneous quasimorphism: it is trivial. 
\end{proof}

\section{Estimation of the commutator length}

This section is devoted to the proof of Proposition \ref{cl}. We will first make the proof in the case $r=0$ and then an approximation argument will give the $r>0$ case.
We will need a theorem by Eisenbud, Hirsch and Neumann on the commutator length for lifts of circle homeomorphisms.

\begin{theorem}(Eisenbud-Hirsch-Neumann (see \cite{EHN} Theorem 2.3))

A homeomorphism $F$ in $\mathrm{Homeo}_{\mathbb{Z}}(\mathbb{R})$ can be written as a product of $n$ commutators in $\mathrm{Homeo}_{\mathbb{Z}}(\mathbb{R})$ if and only if:
$$\min_{x \in \mathbb{R}} |F(x)-x| <2n-1.$$

\end{theorem}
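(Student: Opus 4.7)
The plan is to prove each direction of the equivalence separately, treating necessity by quasi-morphism estimates and sufficiency by an explicit construction.

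For necessity, the main tool is the translation number $\tau\colon \mathrm{Homeo}_{\mathbb{Z}}(\mathbb{R}) \to \mathbb{R}$, which is a homogeneous quasi-morphism of defect at most $1$, is invariant under conjugation, and controls displacement via $m(F) \le \tau(F) \le M(F)$, where $m(F)=\min_{x}(F(x)-x)$ and $M(F)=\max_{x}(F(x)-x)$. A useful sharp observation is that $M(F)-m(F)<1$ for every $F \in \mathrm{Homeo}_{\mathbb{Z}}(\mathbb{R})$, since $F$ is an order-preserving homeomorphism of $\mathbb{R}$ commuting with integer translations. Writing each commutator as $(G_iH_iG_i^{-1})\cdot H_i^{-1}$ and combining conjugation invariance of $\tau$ with its defect bound yields $|\tau([G_i,H_i])|\le 1$, and iterating the defect across the $n$-fold product gives $|\tau(F)|\le n+(n-1)=2n-1$ for any $F=\prod_{i=1}^{n}[G_i,H_i]$. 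Suppose, for contradiction, that $|F(x)-x|\ge 2n-1$ for all $x$; by the intermediate value theorem the continuous function $F(x)-x$ does not change sign, so, up to replacing $F$ by $F^{-1}$, we may assume $F(x)-x\ge 2n-1$ everywhere. Then $m(F)\ge 2n-1$, forcing the chain $2n-1\le m(F)\le \tau(F)\le 2n-1$ to be an equality. Tracing through when each intermediate defect estimate can be saturated produces the required contradiction.

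For sufficiency, I would proceed by induction on $n$. The base case $n=1$ reduces, via composition with a suitable integer translation and a conjugation, to the case where $F$ fixes a point $x_{0}$; then $F$ preserves the fundamental interval $[x_{0},x_{0}+1]$ and restricts there to a homeomorphism fixing both endpoints, which, by a classical argument of Anderson, is a single commutator in the group of orientation-preserving homeomorphisms of the interval fixing its boundary. Periodically extending the two commutator factors to $\mathrm{Homeo}_{\mathbb{Z}}(\mathbb{R})$ then yields the desired commutator decomposition. For the inductive step, given $F$ with $\min_{x}|F(x)-x|<2n-1$, I would construct a single commutator $[G,H]$ whose effect is to shift the displacement of $F$ by $\pm 2$, so that $[G,H]\cdot F$ satisfies $\min_{x}|\,[G,H]\cdot F(x)-x\,|<2n-3$, and invoke the inductive hypothesis to produce $n-1$ further commutators.

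The main obstacle I expect is obtaining the strict inequality $<$ rather than $\le$ in the necessity direction. The quasi-morphism bookkeeping yields only the weak bound $|\tau(F)|\le 2n-1$; ruling out the saturated case requires identifying precisely when the defect of $\tau$ and the conjugation-based bound $|\tau([G_i,H_i])|\le 1$ can simultaneously be sharp, and checking that this rigid configuration cannot be realised by a genuine product of $n$ commutators in $\mathrm{Homeo}_{\mathbb{Z}}(\mathbb{R})$.
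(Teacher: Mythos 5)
First, note that the paper itself does not prove this statement: it is quoted verbatim from Eisenbud--Hirsch--Neumann \cite{EHN}, so there is no internal proof to compare with; your attempt has to stand on its own, and as written it has genuine gaps in both directions.

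In the necessity direction, the quasi-morphism bookkeeping only yields $|\tau(F)|\leq 2n-1$, hence $m(F)\leq 2n-1$ where $m(F)=\min_x(F(x)-x)$, and your plan to get strictness by ``tracing through when the defect estimates are saturated'' cannot work with these tools alone: the saturated value $\tau(F)=2n-1$ \emph{is} attained by products of $n$ commutators (take $F$ with rotation number exactly $2n-1$ but with some points displaced by less than $2n-1$; such $F$ satisfies the EHN criterion), so no contradiction can be extracted from $\tau(F)=2n-1$. What must be excluded is $m(F)=2n-1$, i.e.\ a statement about pointwise displacement that the homogeneous quasi-morphism $\tau$ cannot see. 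The standard repair is pointwise: $m(AB)\leq m(A)+M(B)$, $M(C)<m(C)+1$, together with the sharp one-commutator lemma $m([G,H])<1$ (if $[G,H](x)\geq x+1$ for all $x$, then $GH\geq T_{1}\circ HG$ pointwise, whence $\tau(GH)\geq 1+\tau(HG)$, contradicting that $GH$ and $HG$ are conjugate); an easy induction then gives $m(\prod_{i=1}^{n}[G_i,H_i])<2n-1$ strictly. Without some such pointwise ingredient, your necessity argument does not close.

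The sufficiency direction is more seriously flawed. Your base case reduction ``compose with an integer translation and conjugate so that $F$ fixes a point'' fails: the translation by $1/2$ has $\min_x|F(x)-x|=\tfrac12<1$, yet it has rotation number $\tfrac12$, so no conjugation and no composition with an integer translation produces a fixed point. Moreover, composing with $T^{\pm k}$ changes the problem in an uncontrolled way, since integer translations are \emph{not} commutators in $\mathrm{Homeo}_{\mathbb{Z}}(\mathbb{R})$ (that $T_1$ is not a commutator is exactly the strictness in the statement you are proving), so writing $T^{-k}F$ as a commutator says nothing about $F$. The appeal to Anderson's trick for a homeomorphism of $[0,1]$ fixing the endpoints is also not automatic, because that trick requires the support to be displaceable off itself, which fails when the support is all of $[0,1]$. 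Finally, in the inductive step, the commutator ``shifting the displacement by $\pm 2$'' cannot shift it uniformly (any commutator $C$ satisfies $M(C)>-1$ and $m(C)<1$), so producing a commutator that lowers $\min_x|F(x)-x|$ by $2$ at some point is essentially the same construction problem as the base case, which you have not solved. In short, the hard content of the theorem --- the explicit construction of commutators for maps with small displacement but arbitrary (possibly irrational) rotation number, and the strictness of the bound --- is precisely what is missing.
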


\subsection{Lower bound of the commutator length for $r=0$}

Let $f$ be a homeomorphism in $\mathrm{Homeo}_{0}(\mathbb{A})$ with $cl_{0}(f)=n$. Let $F$ denote a lift of $f$ which can be written as a product of $n$ commutators in the group of lifts of elements in $\mathrm{Homeo}_{0}(\mathbb{A})$ namely
$$\left\{G \in \mathrm{Homeo}_{0}(\mathbb{R}\times [0,1]), \forall(x,t) \in \mathbb{R} \times [0,1], \ G(x+1,t)=G(x,t)+(1,0) \right\}.$$
Then $F_{0}$ and $F_{1}$, which belong to $\mathrm{Homeo}_{\mathbb{Z}}(\mathbb{R})$, can be written as a product of $n$ commutators. Using the Eisenbud-Hirsch-Neumann theorem, we get:
$$\min_{x \in \mathbb{R}} |F_{0}(x)-x| =|F_{0}(x_{0})-x_{0}| <2n-1$$
and:
$$\min_{x \in \mathbb{R}} |F_{1}(x)-x|=|F_{1}(x_{1})-x_{1}| <2n-1.$$
We may assume that: $x_{0}<x_{1}<x_{0}+1$. These inequalities imply that:
$$\begin{array}{rcl}
 \alpha(f) & \leq & |F_{0}(x_{0})- F_{1}(x_{0}) | \\
 & \leq & |F_{0}(x_{0})-x_{0}| + |(F_{1}(x_{0})-x_{0})- (F_{1}(x_{1})-x_{1})|+|F_{1}(x_{1})-x_{1}| \\
 & < & (2n-1) +2+(2n-1) \\
 & < & 4n
\end{array}
$$
The lower bound in Proposition \ref{cl} is therefore proved.

\subsection{Upper bound of the commutator length for $r=0$}

To get an upper bound, we first compose a given homeomorphism by some number of commutators to get a homeomorphism which admits a lift which pointwise fixes the boundary. Then we compose by 2 commutators to get a homeomorphism with a lift which is the identity in a neighbourhood of the boundary. Finally, such a homeomorphism can be written as a product of 2 commutators, by a result by Burago, Ivanov and Polterovich.

Let $f$ be a homeomorphism in $\mathrm{Homeo}_{0}(\mathbb{A})$. Let $k=E(\frac{\alpha(f)+3}{4})+1$.

\subsubsection{First step: getting a homeomorphism with a lift which fixes pointwise the boundary by composing with $k$ commutators}

Up to conjugating by $(x,r) \rightarrow (x,1-r)$, we may assume that $f$ is a "positive twist": there exists a lift $F$ of $f$ and a real number $x_{0}$ which satisfy $\alpha(f)=F_{1}(x_{0})-F_{0}(x_{0})$.

By composing the lift $F$ by an integral translation if necessary, we may suppose that:
$$2k-2 \leq F_{1}(x_{0})-x_{0} < 2k-1.$$
Then, as $4k>\alpha(f)+3$:
$$ F_{0}(x_{0})-x_{0}=F_{1}(x_{0})-x_{0}-\alpha(f)\geq 2k-2-\alpha(f) >1-2k. $$
By using the Eisenbud-Hirsch-Neumann theorem, we get that $F_{1}$ and $F_{0}$ can be written as products of $k$ commutators in $\mathrm{Homeo}_{\mathbb{Z}}(\mathbb{R})$:
$$\left\{
\begin{array}{l}
F_{1}=\prod_{i=1}^{k}[g_{i,1},h_{i,1}] \\
F_{0}=\prod_{i=1}^{k}[g_{i,0},h_{i,0}]
\end{array}
\right. ,
$$
where the $g_{i,j}$'s and the $h_{i,j}$'s belong to $\mathrm{Homeo}_{\mathbb{Z}}(\mathbb{R})$.

For every index $i$, let us take $G_{i} :\mathbb{R} \times [0,1] \rightarrow \mathbb{R} \times [0,1]$ (respectively $H_{i}$) a lift of an element $g_{i}$ (respectively $h_{i}$) of $\mathrm{Homeo}_{0}(\mathbb{A})$ which satisfies:
$$\left\{
\begin{array}{l}
p_{1} \circ G_{i}(.,1)=g_{i,1} \\
p_{1} \circ G_{i}(.,0)=g_{i,0} 
\end{array}
\right. 
$$
(respectively:
$$\left\{
\begin{array}{l}
p_{1} \circ H_{i}(.,1)=h_{i,1} \\
p_{1} \circ H_{i}(.,0)=h_{i,0} 
\end{array}
\right. ).
$$
Note that the homeomorphisms $H_{i}$ and $G_{i}$ exist because the $g_{i,j}$'sand the $h_{i,j}$'s are isotopic to the identity.
Then the homeomorphism $[h_{k},g_{k}] \circ [h_{k-1},g_{k-1}] \circ \ldots \circ [h_{1},g_{1}] \circ f$ admits a lift which pointwise fixes the boundary.

\subsubsection{Second step: a homeomorphism with a lift which pointwise fixes the boundary can be written as a product of $4$ commutators}

Fix a homeomorphism $f$ in $\mathrm{Homeo}_{0}(\mathbb{A})$ with a lift $F$ which is the identity on the boundary. 

Consider a path $c:[0,1]\rightarrow \mathbb{A}$ which satisfies the following conditions, where $p_{1}$ and $p_{2}$ are respectively the first and the second projection of $\mathbb{A}=\mathbb{R} / \mathbb{Z} \times [0,1]$:
\begin{itemize}
\item $c(0)=(-\frac{1}{4},0)$ and $c(1)=(\frac{1}{4},0)$.
\item $\forall t \in (0,1), \ 0 < p_{2} \circ c(t) <\frac{1}{4}$.
\item $\forall t \in [0,1], \ p_{2} \circ F \circ c(t)< \frac{1}{4}$.
\item $\forall t \in [0,1], \ p_{1} \circ F \circ c(t) \in (-\frac{3}{8},\frac{3}{8})$.
\item $\forall t \in [0,1], \ p_{1} \circ c(t) \in (-\frac{3}{8},\frac{3}{8})$
\end{itemize}

Let $D$ be the connected component of $\mathbb{A}-(|-\frac{1}{4},\frac{1}{4}]\times \left\{ 0 \right\} \cup c([0,1]))$ whose boundary contains the point $(0,0)$. According to the Schönfliess theorem, there exists a homeomorphism $f_{1,0}$ in $\mathrm{Homeo}_{0}(\mathbb{A})$ which coincides with $f$ on the closure of $D$ (which is a closed topological disc) and with support included in $(-\frac{3}{8},\frac{3}{8})\times [0, \frac{1}{2})=D_{1,0}$. By taking a lace homotopic to the lower boundary component and close to it and applying the Schönfliess theorem, we can find a homeomorphism $f_{2,0}$ which coincides with $f_{1,0}^{-1} \circ f$ (which is the identity on $D$) in a neighbourhood of the lower boundary $\mathbb{R} \times \left\{ 0 \right\}$ and which satisfies:
$$supp(f_{2,0}) \subset (\frac{1}{8},\frac{7}{8}) \times  [0,\frac{1}{2})=D_{2,0}.$$

With the same technique, we can build  homeomorphisms $f_{1,1}$ and $f_{2,1}$ in $\mathrm{Homeo}_{0}(\mathbb{A})$ which satisfy:
\begin{itemize}
\item the homeomorphism $f_{1,1} \circ f_{2,1}$ coincides with $f$ in a neighbourhood of the upper boundary $\mathbb{R} \times \left\{ 1 \right\}$.
\item the support of $f_{1,1}$ is included in $(-\frac{3}{8},\frac{3}{8}) \times  (\frac{1}{2},1]=D_{1,1}$.
\item the support of $f_{2,1}$ is included in $(\frac{1}{8},\frac{7}{8}) \times  (\frac{1}{2},1]=D_{2,1}$.
\end{itemize}

Thus, we have $f=f_{1,1} \circ f_{2,1} \circ f_{1,0} \circ f_{2,0}$ near the boundary.

\begin{lemma} \label{halfplane} Let $\mathbb{H}$ denote the closed upper half plane. Then every element in $\mathrm{Homeo}_{0}(\mathbb{H})$ can be written as one commutator.
\end{lemma}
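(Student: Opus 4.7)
The approach is to adapt the classical Anderson infinite-composition trick for commutators. Given $f \in \mathrm{Homeo}_{0}(\mathbb{H})$, we aim to find $g, h \in \mathrm{Homeo}_{0}(\mathbb{H})$ with $f = [h,g]$. The key ingredient is a ``shift'' homeomorphism $g$ whose forward iterates displace compact sets to infinity. A natural candidate is a sufficiently large horizontal translation $g(x,y) = (x+N,y)$, or alternatively the dilation $g(z) = 2z$ in complex coordinates on $\mathbb{H}$, which has $0 \in \partial\mathbb{H}$ as its unique repelling fixed point and sends all other orbits to infinity.

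Once $g$ is chosen so that the sets $g^{n}(\mathrm{supp}(f))$ for $n \geq 0$ are pairwise disjoint and accumulate only at infinity, set
$$h = f \circ (gfg^{-1}) \circ (g^{2}fg^{-2}) \circ \cdots .$$
The disjointness condition makes this infinite composition locally finite at each point, hence $h$ is a well-defined element of $\mathrm{Homeo}_{0}(\mathbb{H})$. A direct computation gives
$$g h g^{-1} = (gfg^{-1}) \circ (g^{2}fg^{-2}) \circ \cdots = f^{-1} \circ h,$$
which rearranges to $f = h g h^{-1} g^{-1} = [h, g]$, as required.

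The main obstacle is arranging that the supports of the conjugates $g^{n}fg^{-n}$ are pairwise disjoint for $n \geq 0$. For $f$ with compact support --- which is the case needed when the lemma is applied in the proof of Proposition \ref{cl}, since the homeomorphisms $f_{i,j}$ constructed above have support bounded away from the ``non-real'' sides of $D_{i,j}$ --- the condition is immediate: take $g = \tau^{N}$ for the horizontal translation $\tau(x,y) = (x+1,y)$ and $N$ larger than the horizontal extent of $\mathrm{supp}(f)$. For a general $f \in \mathrm{Homeo}_{0}(\mathbb{H})$ with non-compact support, a preliminary reduction is needed, exploiting the self-similarity of $\mathbb{H}$ under its own homeomorphisms (e.g., conjugation by a dilation) to bring $\mathrm{supp}(f)$ into a configuration where the dynamics of $g$ separates its $g$-orbit.
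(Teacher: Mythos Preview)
Your proof is correct and uses essentially the same idea as the paper: the Anderson--Mather infinite-composition trick, displacing $\mathrm{supp}(f)$ by iterates of a single homeomorphism and forming the infinite commuting product of conjugates. The only cosmetic difference is that the paper chooses the displacing map $\varphi$ so that the iterates $\varphi^{k}(U)$ shrink to a single point on $\partial\mathbb{H}$ (making continuity of the infinite product at the accumulation point immediate), whereas you send the iterates off to infinity via a translation or dilation; both variants work for compactly supported $f$, and as you correctly observe, that is all that is needed for the application to the $f_{i,j}$ in Proposition~\ref{cl}.
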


\begin{proof} 
Let $h$ be an element in $\mathrm{Homeo}_{0}(\mathbb{H})$.
Let $U$ be a neighbourhood of $supp(h)$ and $\varphi$ be a homeomorphism in $\mathrm{Homeo}_{0}(\mathbb{H})$ which satisfies the following conditions:\\
- the open sets $\varphi^{k}(U)$, for $k$ in $\mathbb{N}$, are pairwise disjoint.\\
- the sequence $(\varphi^{k}(U))_{k \in \mathbb{N}}$ converges to a singleton $\left\{p \right\}$ which lies on the boundary as $k$ tends to $+\infty$.

Now, consider the homeomorphism $g$ in $\mathrm{Homeo}_{0}(\mathbb{H})$ which satisfies:\\
- $g=Id$ outside $\bigcup_{k \in \mathbb{N}} \varphi^{k}(U)$.\\
- for every non-negative integer $k$, $g_{| \varphi^{k}(U)}=\varphi^{k} \circ h \circ \varphi^{-k}$.

Then $h=[g, \varphi]$.
\end{proof}

According to the lemma applied in the discs $D_{i,j}$, each $f_{i,j}$ is a commutator, thus $f$ coincides with a product of $4$ commutators in a neighbourhood of the boundary. We will see that we can improve this to $2$.
Using this last lemma, we may consider homeomorphisms $g_{i,j}$ and $h_{i,j}$ supported in $D_{i,j}$ which satisfy $f_{i,j}=[g_{i,j},h_{i,j}]$. Note that:
$$supp(g_{i,0}) \cap supp(g_{i,1})= \emptyset, $$
$$supp(h_{i,0}) \cap supp(h_{i,1})= \emptyset,$$
$$supp(g_{i,0}) \cap supp(h_{i,1})= \emptyset,$$
$$supp(h_{i,0}) \cap supp(g_{i,1})= \emptyset,$$
and
$$supp(f_{i,0}) \cap supp(f_{i,1})= \emptyset.$$
Thus, those pairs of homeomorphisms commute. Therefore:
$$\begin{array}{rcl}
g=f_{1,1}\circ f_{2,1} \circ f_{1,0} \circ f_{2,0} & = & f_{1,1} \circ f_{1,0} \circ f_{2,1} \circ f_{2,0} \\
 & = & [g_{1,1},h_{1,1}] \circ [g_{1,0},h_{1,0}] \circ [g_{2,1},h_{2,1}] \circ [g_{2,0},h_{2,0}]\\
 & = & [g_{1,1} \circ g_{1,0}, h_{1,1} \circ h_{1,0}] \circ [g_{2,1}\circ g_{2,0},h_{2,1} \circ h_{2,0}]
\end{array}
$$
Moreover, this homeomorphism coincides with $f$ on a neighbourhood of the boundary. Thus $g^{-1} \circ f$ admits a lift which is the identity near the boundary. As the open annulus is a portable manifold (see \cite{BIP} Theorem 1.18), the homeomorphism $g^{-1} \circ f$ can be written as a product of two commutators.
This ends the proof of Proposition \ref{cl} in the case $r=0$. 

\subsection{Case $r>0$}

It follows directly from the result on homeomorphisms that:

$$\forall f \in \mathrm{Diff}_{0}^{r}(\mathbb{A}), \ cl_{r}(f)\geq cl_{0}(f) \geq E(\frac{\alpha(f)}{4})+1.$$

To get an upper bound (for $r \neq 2,3$), the process is the following. We first write a diffeomorphism $f$ as a product of $E(\frac{\alpha(f)+3}{4})+5$ commutators of homeomorphisms and we approximate every homeomorphism appearing in this product by a diffeomorphism. Hence, arbitrarily close to $f$ in the $C^{0}$ topology, there is a product of $E(\frac{\alpha(f)}{4})+5$ commutators of diffeomorphisms. To conclude, it suffices to notice that, for $r \neq 2,3$, a diffeomorphism sufficiently close to the identity can be written as a product of $4$ commutators. Let us show this last fact. We need the following lemma, which is a consequence of a fragmentation lemma proved in the appendix:

\begin{lemma}\label{fragannulus}
There exists a $C^{0}$ neighbourhood $\Upsilon$  of the identity in $\mathrm{Diff}_{0}^{r}(\mathbb{A})$ such that:
$$\forall f \in \Upsilon, \ \exists f_{1}, f_{2} \in \mathrm{Diff}_{0}^{r}(\mathbb{A}), 
\left\{
\begin{array}{l}
f=f_{1} \circ f_{2} \\
supp(f_{1}) \subset (-\frac{3}{8},\frac{3}{8}) \times [0,1] \\
supp(f_{2}) \subset (\frac{1}{8},\frac{7}{8}) \times [0,1]
\end{array}
\right.
.
$$
\end{lemma}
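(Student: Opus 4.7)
The plan is to reduce this to the general fragmentation lemma proved in the appendix, which is expected to assert that, given any finite open cover $\{V_j\}_{j=1}^{N}$ of a compact manifold $M$ (possibly with boundary) by sufficiently nice open sets, there exists a $C^0$-neighbourhood of the identity in $\mathrm{Diff}_0^r(M)$ in which every diffeomorphism admits a factorisation $g_1 \circ \cdots \circ g_N$ with $\mathrm{supp}(g_j) \subset V_j$ for each $j$.

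First I would verify that the two prescribed open sets $U_1 = (-\tfrac{3}{8},\tfrac{3}{8}) \times [0,1]$ and $U_2 = (\tfrac{1}{8},\tfrac{7}{8}) \times [0,1]$ form an open cover of $\mathbb{A} = \mathbb{R}/\mathbb{Z} \times [0,1]$. Indeed, the projection of $U_1$ to $\mathbb{R}/\mathbb{Z}$ reduces modulo $1$ to $[0,\tfrac{3}{8}) \cup (\tfrac{5}{8}, 1)$, whose union with the projection $(\tfrac{1}{8},\tfrac{7}{8})$ of $U_2$ exhausts $\mathbb{R}/\mathbb{Z}$, so $U_1 \cup U_2 = \mathbb{A}$. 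With the covering property in hand, applying the appendix's fragmentation lemma to the two-element cover $\{U_1, U_2\}$ directly produces the required neighbourhood $\Upsilon$ and the decomposition $f = f_1 \circ f_2$ with $\mathrm{supp}(f_j) \subset U_j$ for every $f \in \Upsilon$.

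The main content therefore lies in the appendix's general lemma, which is the hard part. The standard strategy is the following: given $f$ sufficiently close to the identity in $C^0$, one chooses a smooth isotopy $(f_t)_{t \in [0,1]}$ joining the identity to $f$ whose terms all remain uniformly close to the identity in $C^0$, then one decomposes the time-dependent generating vector field using a partition of unity subordinate to the cover, and finally one integrates the resulting pieces one after another, composing with the accumulated factor at each stage so that supports are tracked correctly. The delicate point is controlling that successive factors, after conjugation by the flows that precede them, do not have supports drifting outside their $U_j$; this is precisely where the hypothesis of $C^0$-proximity of $f$ to the identity is used essentially, typically by shrinking the supports of the partition-of-unity pieces strictly inside the $U_j$ before integrating, with the margin absorbing the small drift introduced by the isotopy.
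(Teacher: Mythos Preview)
Your reduction of the lemma to the appendix's general fragmentation proposition, applied to the two-element cover $\{U_1,U_2\}$, is correct and is exactly what the paper does.

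Where your proposal runs into trouble is the final paragraph, your sketch of how that general proposition is proved. The vector-field / partition-of-unity scheme you outline is the classical Banyaga--Thurston fragmentation, but that argument needs smallness of the generating vector field, not merely $C^0$-smallness of $f$. A $C^r$-diffeomorphism can be arbitrarily $C^0$-close to the identity while being $C^1$-far; any isotopy reaching it must then have a large vector field somewhere, and once you cut that field by a bump function the time-one maps of the pieces cannot be confined to the $U_j$ using only a $C^0$-margin. Your last sentence acknowledges the difficulty but does not indicate how $C^0$-proximity alone would control this drift; in the standard argument it does not.

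The paper's appendix takes a different route, avoiding global isotopies and vector fields. It proves an extension lemma: given $f$ $C^0$-close to the identity and a closed disk $K\subset U$, one constructs $g\in\mathrm{Diff}_0^r(M)$ supported in $U$, still $C^0$-close to the identity, and equal to $f$ near $K$. The construction first matches $f$ along $\partial K$ by extending a $C^0$-small embedding of a circle to a $C^0$-small ambient diffeomorphism (a lemma of Khanevsky), and then matches the germ of $f$ across $\partial K$ by an explicit coordinate interpolation; boundary cases are handled by analogous lemmas for arcs. An induction over disks $K_i\subset U_i$ covering $M$ then peels off the factors $f_i$ one at a time. Here the $C^0$ hypothesis enters only as control on one-dimensional embeddings, where it genuinely suffices.
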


Now, the manifolds $(-\frac{3}{8},\frac{3}{8}) \times [0,1]$ and $(\frac{1}{8},\frac{7}{8}) \times [0,1]$ are portable, in the sense of \cite{BIP}. It follows from \cite{BIP} Theorem 1.18 that, if $r \neq 2, 3$, the diffeomorphisms $f_{1}$ and $f_{2}$ may be each written as a product of two commutators.

This concludes the proof in the case $r>0$.

\section{Estimation of the fragmentation norm}

The analog of Proposition \ref{cl} for the fragmentation norm is the following proposition, which implies the second part of Theorem \ref{qi}:

\begin{proposition} \label{frag}
Let $f$ be a homeomorphism in $\mathrm{Diff}_{0}^{r}(\mathbb{A})$ and $F:\mathbb{R} \times [0,1] \rightarrow \mathbb{R} \times [0,1]$ be a lift of $f$. Suppose $f \neq Id$. Then, for any $r$ in $\mathbb{N} \cup \left\{\infty\right\}$: 
$$E(\alpha(f))+40 \ \geq \ \mathrm{Frag}_{r}(f) \ \geq \ E(\alpha(f))+1.$$
If $r=0$, then the $40$ appearing in the upper bound may be improved to $36$.
\end{proposition}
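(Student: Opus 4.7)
The plan is to mirror the structure of Proposition~\ref{cl}, replacing commutator tools by fragmentation ones and replacing the Eisenbud--Hirsch--Neumann theorem by a fragmentation lemma for $\mathrm{Homeo}_{\mathbb{Z}}(\mathbb{R})$.

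\emph{Lower bound.} Write $f = f_1 \circ \cdots \circ f_n$ with each $f_i$ supported in an embedded closed ball $B_i$. By the definition of such a ball, $B_i \cap \partial \mathbb{A}$ is empty or a single open cell, so $B_i$ meets at most one boundary component and, when it does, the intersection is an arc of length strictly less than $1$. For each $i$ I choose a lift $F_i$ of $f_i$ equal to the identity on every untouched boundary component; on a touched component the restriction $F_{i,j}$ is then $\mathbb{Z}$-periodically supported in intervals of length $<1$, and hence satisfies $|F_{i,j}(x)-x| < 1$ for every $x$. Writing $n_j$ for the number of indices $i$ such that $B_i$ touches $\mathbb{R}/\mathbb{Z}\times\{j\}$ and setting $\tilde{F}:=F_1\circ\cdots\circ F_n$, an immediate induction using $|GH(x)-x|\leq |G(H(x))-H(x)|+|H(x)-x|$ gives $|\tilde{F}_j(x)-x| < n_j$ for all $x$, whence
$$ \alpha(f) \ \leq \ |\tilde{F}_1(x) - \tilde{F}_0(x)| \ < \ n_0 + n_1 \ \leq \ n, $$
and $n \geq E(\alpha(f)) + 1$ since $n \in \mathbb{N}$.

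\emph{Upper bound.} I first treat $r=0$ in three stages parallel to those of Section~3.2. (i) \textbf{Trivialising the boundary restriction.} Up to conjugation I may assume $f$ is a positive twist, and I choose the lift $F$ so that, at some $x_0$ realising $F_1(x_0)-F_0(x_0)=\alpha(f)$, one has $F_0(x_0) \leq x_0 \leq F_1(x_0)$. The key input replacing Eisenbud--Hirsch--Neumann is a fragmentation lemma in $\mathrm{Homeo}_{\mathbb{Z}}(\mathbb{R})$: any $H$ with $|H(x_0)-x_0|\leq M$ at some point $x_0$ can be written as a product of at most $E(M)+c$ homeomorphisms each $\mathbb{Z}$-periodically supported in an arc of length $<1$, for an absolute constant $c$. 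Applying this separately to $F_1$ and to $F_0^{-1}$, and extending each factor to a ball-supported homeomorphism of $\mathbb{A}$ touching only the relevant boundary component, I consume at most $E(\alpha(f)) + O(1)$ fragments to reduce to a homeomorphism $f'$ whose lift pointwise fixes $\partial \mathbb{A}$. (ii) \textbf{Identity in a neighbourhood of the boundary.} The Schönflies-based argument of Section~3.2.2, applied to the four sub-rectangles $D_{i,j}$, transforms $f'$ at the cost of a bounded number of additional ball-supported factors into a homeomorphism supported in a compact subset of $\mathrm{int}(\mathbb{A})$. (iii) \textbf{Portability.} Since the open annulus is a portable manifold, \cite{BIP}, Theorem~1.18, bounds the fragmentation norm of the remaining piece by an absolute constant. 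Tracking all the constants yields the bound $E(\alpha(f)) + 36$ for $r=0$. For $r>0$, the $C^0$-approximation trick at the end of Section~3, combined with Lemma~\ref{fragannulus} applied to $f_{\mathrm{approx}}^{-1}\circ f$, costs at most $4$ additional fragments and gives the advertised $E(\alpha(f))+40$.

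The main obstacle is step (i): proving the fragmentation lemma for $\mathrm{Homeo}_{\mathbb{Z}}(\mathbb{R})$ with sharp linear dependence on $M$ and a small absolute constant $c$, and carefully tracking every absolute constant through stages (i)--(iii) and the approximation step so that they sum to exactly $36$ (respectively~$40$). Stages (ii) and (iii) essentially repeat the corresponding arguments from Proposition~\ref{cl}, and the approximation step for $r>0$ is handled just as at the end of Section~3, with the two sub-rectangles of Lemma~\ref{fragannulus} being further fragmented into balls touching a single boundary component.
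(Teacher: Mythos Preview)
Your plan is correct and mirrors the paper's strategy closely. Two points of comparison are worth noting. First, your lower bound is actually more direct than the paper's: you telescope the boundary displacements to get $\alpha(f)<n_0+n_1\leq n$ immediately, whereas the paper first proves a sharp fragmentation lemma for $\mathrm{Homeo}_{\mathbb{Z}}(\mathbb{R})$ (its Proposition~\ref{fragcircle}: $\mathrm{Frag}(F)\leq k+2 \Leftrightarrow \min_x|F(x)-x|<k+1$, so your constant is $c=2$) and then runs a small case analysis on whether $k_0,k_1\in\{0,1\}$. Second, the paper's explicit constant count for the upper bound is: stage~(i) spends $E(\alpha(f))+2$ on the upper boundary and $2$ on the lower, using Proposition~\ref{fragcircle} after normalising the lift so that $-1<F_0(x_0)-x_0\leq 0$; stage~(ii) costs $4$; stage~(iii) costs $28$, quoting the \emph{proof} of Theorem~1.17 in \cite{BIP} (not Theorem~1.18, which is the commutator statement for portable manifolds). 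One caution on your $r>0$ step: the two rectangles in Lemma~\ref{fragannulus} each meet both boundary circles, so they are not ``balls'' in the sense of $\mathrm{Frag}_r$; the clean fix is to invoke the general fragmentation lemma (Proposition~\ref{fraglemma}) directly with a cover by four half-rectangles, each touching a single boundary component, which yields the $4$ extra fragments you want.
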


Actually, we can have far better upper bound in the preceding proposition by using the following result which will be showed in a next paper:

\begin{proposition} \label{openannulus}
For any $r \in \mathbb{N} \cup \left\{\infty\right\}$, the fragmentation norm on $\mathrm{Diff}_{0}^{r}(int(\mathbb{A}))$ is bounded by $4$.
\end{proposition}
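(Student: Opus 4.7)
The plan is to decompose the open annulus into two overlapping open sets each diffeomorphic to $\mathbb{R}^{2}$, and then apply the fragmentation bound for $\mathbb{R}^{n}$ from \cite{BIP} inside each piece; the target $4$ then splits naturally as $2+2$. Concretely, I would pick two smoothly and properly embedded disjoint arcs $\gamma_{1},\gamma_{2}\subset\mathrm{int}(\mathbb{A})=S^{1}\times(0,1)$, each joining the two ends of the annulus, and set $V_{i}=\mathrm{int}(\mathbb{A})\setminus\gamma_{i}$. Each $V_{i}$ is an open strip diffeomorphic to $\mathbb{R}^{2}$, and $V_{1}\cup V_{2}=\mathrm{int}(\mathbb{A})$ since $\gamma_{1}\cap\gamma_{2}=\emptyset$.

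The key step is a \emph{global} fragmentation statement: every $f\in\mathrm{Diff}_{0}^{r}(\mathrm{int}(\mathbb{A}))$ factors as $f=f_{1}\circ f_{2}$ with $\mathrm{supp}(f_{i})\subset V_{i}$. For $f$ close to the identity this is a standard fragmentation lemma in the spirit of Lemma \ref{fragannulus}; for arbitrary $f$ one must use the portability of $\mathrm{int}(\mathbb{A})$, namely the fact that any compact subset of $V_{i}$ can be pushed into one of the two free ends by a diffeomorphism supported in $V_{i}$. My plan is to take an isotopy $(f_{t})$ from the identity to $f$, fragment each small piece locally into factors supported alternately in $V_{1}$ and $V_{2}$, and then reassemble using a BIP-style infinite absorption trick so that all the successive pieces supported in a given $V_{i}$ collapse into a single factor.

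Once such a decomposition is achieved, each $f_{i}$ lies in $\mathrm{Diff}_{0}^{r}(V_{i})\cong\mathrm{Diff}_{0}^{r}(\mathbb{R}^{2})$, which is a portable open manifold. Applying the fragmentation estimate for open portable manifolds from \cite{BIP}, each $f_{i}$ can be written as a product of two diffeomorphisms of $V_{i}$ supported in balls; since any embedded closed ball of $V_{i}$ is also an embedded closed ball of $\mathrm{int}(\mathbb{A})$, concatenating the two decompositions yields the announced bound of $4$.

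The main obstacle is obtaining the global fragmentation into exactly two factors, rather than into a number of factors depending on the "size" of $f$. The idea, analogous to Lemma \ref{halfplane}, is to exploit the ends of $\mathrm{int}(\mathbb{A})$: any extra fragment supported near an end of $V_{i}$ can be swallowed into a single factor by conjugating with a diffeomorphism whose positive iterates push disjointly into that end, producing an "infinite commutator" that still has support in $V_{i}$. Implementing this bookkeeping so that one ends up with precisely two factors, one in each $V_{i}$, and without disturbing the $2+2$ count from the portability step, is the delicate technical point.
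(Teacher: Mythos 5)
A preliminary remark: the paper contains no proof of Proposition \ref{openannulus} at all --- the author explicitly defers it to a subsequent paper --- so your argument has to stand on its own and cannot be measured against the author's proof.

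It does not stand: the gap is your key step, the claim that every $f\in\mathrm{Diff}_{0}^{r}(\mathrm{int}(\mathbb{A}))$ factors as $f=f_{1}\circ f_{2}$ with $\mathrm{supp}(f_{i})\subset V_{i}$, where $V_{i}$ is the complement of a fixed essential arc $\gamma_{i}$. For the arcs you describe (say the vertical arcs $\gamma_{1}=\{0\}\times(0,1)$ and $\gamma_{2}=\{1/2\}\times(0,1)$, and more generally any arcs whose lifts have bounded horizontal oscillation) this is false, by exactly the displacement mechanism the paper uses for its lower bounds (Proposition \ref{frag}). Indeed, if $\mathrm{supp}(f_{i})\subset V_{i}$ then $f_{i}$ is the identity on a neighbourhood of $\gamma_{i}$; the lift $F_{i}$ of $f_{i}$ which fixes $\pi^{-1}(\gamma_{i})$ pointwise then preserves each component of $\pi^{-1}(V_{i})$, i.e.\ each vertical strip of width $1$, so $|p_{1}\circ F_{i}(x,y)-x|\leq 1$ everywhere, and $F_{1}\circ F_{2}$ is a lift of $f$ moving first coordinates by at most $2$. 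But take $f=\tau^{N}$, where $\tau(x,y)=(x+\beta(y),y)$ with $\beta\equiv 0$ near $0$ and $\beta\equiv 1$ near $1$: this lies in $\mathrm{Diff}_{0}^{\infty}(\mathrm{int}(\mathbb{A}))$ (isotopy $(x,y)\mapsto(x+t\beta(y),y)$), yet every lift of $\tau^{N}$ is of the form $(x,y)\mapsto(x+N\beta(y)+k,y)$ and moves some first coordinate by at least $N/2$. So for $N\geq 5$ no factorization subordinate to $\{V_{1},V_{2}\}$ exists. This also shows the failure is not a bookkeeping matter that a ``BIP-style absorption trick'' can repair: the alternating factors supported in $V_{1}$ and $V_{2}$ do not commute, and no regrouping or conjugation inside the $V_{i}$ can produce a two-factor decomposition that provably does not exist. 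The entire difficulty of the proposition is that disks (and disk complements) in the open annulus must be allowed to wrap more and more around the annulus as one approaches the ends, in order to absorb unbounded twisting; a fixed cover by two ``straight'' disk complements forbids precisely this, and your outline offers no other mechanism for killing the torsion.

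Two secondary mismatches. The portable-manifold results of \cite{BIP} you invoke concern compactly supported diffeomorphisms, while the factors $f_{i}$ produced from a general (non-compactly supported) element of $\mathrm{Diff}_{0}^{r}(\mathrm{int}(\mathbb{A}))$ need not be compactly supported; and a diffeomorphism which is not compactly supported is never a finite product of diffeomorphisms supported in balls, since balls are compact. Moreover, if $f_{i}$ \emph{were} compactly supported in $V_{i}\cong\mathbb{R}^{2}$, it would already be supported in a single embedded closed disk, so your scheme would give the bound $2$ rather than $4$ --- a further sign that the intended proof of the stated bound must run along different lines.
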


If we admit this last proposition, we get more precise estimates on the fragmentation norm:
\begin{proposition} \label{frag+}
Let $f$ be a homeomorphism in $\mathrm{Diff}_{0}^{r}(\mathbb{A})$ and $F:\mathbb{R} \times [0,1] \rightarrow \mathbb{R} \times [0,1]$ be a lift of $f$. Suppose $f \neq Id$. Then, for any $r$ in $\mathbb{N} \cup \left\{\infty\right\}$: 
$$E(\alpha(f))+16 \ \geq \ \mathrm{Frag}_{r}(f) \ \geq \ E(\alpha(f))+1.$$
If $r=0$, then the $16$ appearing in the upper bound may be improved to $12$.
\end{proposition}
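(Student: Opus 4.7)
The lower bound $\mathrm{Frag}_r(f) \geq E(\alpha(f))+1$ is identical to that of Proposition \ref{frag} and requires no modification: it comes from the observation that any fragment (supported in a ball of $\mathbb{A}$) admits a lift which is the identity on one of the two lines above $\partial \mathbb{A}$, so it can increase $\alpha$ by at most $1$, and from $\alpha(\mathrm{Id})=0$. This argument is independent of the upper-bound strategy, so it transfers verbatim.

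For the upper bound, the plan is to re-run the proof of Proposition \ref{frag} up to the point where it reduces $f$ to a diffeomorphism $g$ that is the identity in a neighborhood of $\partial \mathbb{A}$, and then to bound $\mathrm{Frag}_r(g)$ by $4$ using Proposition \ref{openannulus} instead of the more costly interior decomposition implicit in Proposition \ref{frag}. The reduction to such a $g$ proceeds in two steps, both borrowed from the proof of Proposition \ref{frag}: first one composes $f$ with roughly $E(\alpha(f))$ explicit fragments, each supported in a ball meeting one boundary component and acting essentially as a translation by $1$ along it, in order to bring $\alpha$ below $1$; then a uniformly bounded further collection of fragments supported in balls meeting $\partial \mathbb{A}$ pushes the resulting diffeomorphism to one whose lift is the identity in a full neighborhood of $\partial \mathbb{A}$.

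At this stage $g$ is compactly supported in $\mathrm{int}(\mathbb{A})$, and Proposition \ref{openannulus} directly produces a decomposition $g = g_1 \circ \cdots \circ g_4$ with each $g_i$ supported in a ball of $\mathrm{int}(\mathbb{A})$. Any such ball is also a ball of $\mathbb{A}$ (it is disjoint from $\partial \mathbb{A}$, which satisfies the boundary condition in the definition of a ball trivially), hence is admissible for the fragmentation norm on $\mathbb{A}$, so the $g_i$ contribute exactly $4$ fragments. Adding these to the boundary-stage count from the proof of Proposition \ref{frag}, we obtain $\mathrm{Frag}_r(f) \leq E(\alpha(f)) + 16$ in general and $\mathrm{Frag}_r(f) \leq E(\alpha(f)) + 12$ for $r = 0$.

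The main technical point is the bookkeeping: one needs to verify that the boundary-normalization stage of the proof of Proposition \ref{frag} really uses at most $12$ (resp.\ $8$) fragments, so that the savings from $40$ to $16$ (resp.\ from $36$ to $12$) are entirely accounted for by replacing the $28$ interior fragments implicit in that proof with the $4$ fragments supplied by Proposition \ref{openannulus}. Once this audit is carried out, the constants in the statement fall out by simple addition and nothing else needs to change.
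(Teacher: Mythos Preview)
Your proposal is correct and matches the paper's own argument exactly: the paper does not give a separate proof of Proposition \ref{frag+}, but simply remarks inside the proof of Proposition \ref{frag} that ``Using proposition \ref{openannulus}, we see that this might be improved to $4$,'' which is precisely the replacement you describe. Your bookkeeping ($E(\alpha(f))+2$ fragments for the upper boundary, $2$ for the lower, $4$ to make the lift the identity near $\partial\mathbb{A}$, then $4$ from Proposition \ref{openannulus} for the interior, plus $4$ extra from the approximation step when $r>0$) reproduces the constants $12$ and $16$ exactly as the paper intends.
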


To make estimates on the fragmentation norm for the closed annulus, we need an analog of the Eisenbud-Hirsch-Neumann theorem.

\subsection{Fragmentation norm on $\mathrm{Homeo}_{\mathbb{Z}}(\mathbb{R})$}

Let $A$ be the subset of $\mathrm{Homeo}_{\mathbb{Z}}(\mathbb{R})$ given by elements which fix pointwise a nonempty open interval. This subset generates $\mathrm{Homeo}_{\mathbb{Z}}(\mathbb{R})$ as a group. If $F$ is a homeomorphism in $\mathrm{Homeo}_{\mathbb{Z}}(\mathbb{R})$, we denote $\mathrm{Frag}(F)$ the minimal number of elements of $A$ necessary to write $F$ as a product of elements of $A$. The analog of the Eisenbud-Hirsch-Neumann theorem for the fragmentation norm is the following:

\begin{proposition} \label{fragcircle}
Let $F$ be a homeomorphism in $\mathrm{Homeo}_{\mathbb{Z}}(\mathbb{R})$. Then, for any $k \geq 0$:
$$\mathrm{Frag}(F) \leq k+2 \Longleftrightarrow \min_{x \in \mathbb{R}} |F(x)-x| < k+1.$$
\end{proposition}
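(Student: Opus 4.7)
The plan is to prove the two implications separately, the $(\Leftarrow)$ direction by induction on $k$. For $(\Rightarrow)$, suppose $F=G_1\circ\cdots\circ G_{k+2}$ with each $G_i\in A$ fixing pointwise a non-empty open interval $I_i$. The first observation is that any $G\in A$ satisfies $|G(y)-y|<1$ for every $y$: since $G$ has a fixed point $x_G$, $\mathbb{Z}$-equivariance gives $G(x_G+n)=x_G+n$ for all $n$, and strict monotonicity forces $G$ to map each open interval $(x_G+n,x_G+n+1)$ into itself. Next, since $J:=I_{k+1}+\mathbb{Z}$ is a non-empty open $\mathbb{Z}$-invariant set, the preimage $G_{k+2}^{-1}(J)$ is non-empty; pick $x$ there. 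Then $G_{k+1}(G_{k+2}(x))=G_{k+2}(x)$, so $F(x)=G_1\circ\cdots\circ G_k(G_{k+2}(x))$. Telescoping with the bound $|G_i(\cdot)-\cdot|<1$ applied to $G_1,\ldots,G_k$ and to $G_{k+2}$ gives $|F(x)-x|<k+1$, whence $\min_y|F(y)-y|<k+1$.

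For $(\Leftarrow)$ by induction on $k$, the base case $k=0$ is as follows. Given $F$ with $\min|F-\mathrm{id}|<1$, pick $x_0$ with $F(x_0)-x_0=:c\in[0,1)$ (applying the argument to $F^{-1}$ if $c<0$). The key construction is a homeomorphism $H\in\mathrm{Homeo}_{\mathbb{Z}}(\mathbb{R})$ that agrees with $F$ on a small neighborhood $(x_0-\varepsilon,x_0+\varepsilon)$ and is the identity outside the slightly larger $\mathbb{Z}$-invariant open set $(x_0-\delta,x_0+c+\delta)+\mathbb{Z}$, with $\varepsilon<\delta$ and $c+2\delta<1$; monotone interpolation on the two transition intervals yields such an $H\in A$. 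Then $G:=F\circ H^{-1}$ equals the identity on the non-empty open set $F((x_0-\varepsilon,x_0+\varepsilon))$, hence $G\in A$, and $F=G\circ H$ gives $\mathrm{Frag}(F)\leq 2$.

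For the inductive step, assume the statement for $k-1$ and take $F$ with $\min|F-\mathrm{id}|\in[k,k+1)$ (the case $\min<k$ follows directly from the inductive hypothesis). Pick $x_0$ realizing the minimum with $F(x_0)-x_0=c\in[k,k+1)$ (again using $F^{-1}$ to reduce to the positive case), choose $\eta\in(0,k+1-c)$, and construct $G\in A$ with $G(x_0+c-1+\eta)=x_0+c$: since this amounts to displacing a single point by $1-\eta<1$, a piecewise-monotone construction analogous to the base case, with the fixed open interval of $G$ placed inside $(x_0+c,x_0+c+\eta)+\mathbb{Z}$, achieves it. Then $F_1:=G^{-1}\circ F$ satisfies $F_1(x_0)=x_0+c-1+\eta$, so $|F_1(x_0)-x_0|<k$; the inductive hypothesis yields $\mathrm{Frag}(F_1)\leq k+1$, hence $\mathrm{Frag}(F)\leq k+2$.

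The main technical obstacle is the explicit construction of the auxiliary $A$-elements $H$ and $G$ in the backward direction: one must verify strict monotonicity of the piecewise-defined map, its compatibility with $\mathbb{Z}$-equivariance at the boundary of a fundamental domain, and the non-emptiness of the fixed open interval inside a single fundamental domain—constraints that translate into the explicit size conditions $c+2\delta<1$ and $\eta<1$.
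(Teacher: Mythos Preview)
Your proof is correct and follows essentially the same approach as the paper: the forward implication uses that each factor in $A$ displaces every point by strictly less than $1$ and telescopes, while the backward implication proceeds by induction on $k$, peeling off one element of $A$ to reduce the minimal displacement by almost $1$. The only cosmetic differences are that the paper, for $(\Rightarrow)$, simply picks $x_0$ fixed by the last factor (rather than arranging $G_{k+2}(x)$ to lie in the fixed interval of $G_{k+1}$), and for $(\Leftarrow)$ it builds the auxiliary $A$-element so that $h\circ F$ fixes a neighborhood of a second point $x_1$ rather than making $H$ agree with $F$ near $x_0$; both variants yield the same decomposition.
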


\begin{proof}
Let us start with the direct implication. Write:
$$ F=F_{1}F_{2} \ldots F_{k+1}F_{k+2},$$
where each $F_{i}$ belongs to $A$. Take a point $x_{0}$ such that $F_{k+2}(x_{0})=x_{0}$. We have, for every integer $i \in [1,k+1]$:
$$|F_{i}F_{i+1} \ldots F_{k+1}F_{k+2}(x_{0})-F_{i+1} \ldots F_{k+1}F_{k+2}(x_{0})| < 1$$
and by summing these inequalities:
$$|F(x_{0})-x_{0}|<k+1.$$
This proves the direct implication.
Let us show the converse by induction on $k$.
For $k=0$, up to conjugating by $(x,r) \rightarrow (x,1-r)$, we may assume:
$$0\leq \min_{x \in \mathbb{R}} |F(x)-x|=F(x_{0})-x_{0} < 1.$$
Choose a point $x_{1}$ such that $F(x_{0})<x_{1}<x_{0}+1$. Then $F(x_{0})<F(x_{1})<F(x_{0})+1$ and we can find a homeomorphism $h$ in $\mathrm{Homeo}_{\mathbb{Z}}(\mathbb{R})$ which fixes a neighbourhood of $F(x_{0})$ and satisfies $h\circ F=Id$ in a neighbourhood of $x_{1}$. The decomposition $F=h^{-1} \circ (h \circ F)$ shows that $\mathrm{Frag}(F) \leq 2$.

Suppose the converse holds for an integer $k$. Let us prove it for the integer $k+1$. Suppose $\min_{x \in \mathbb{R}} |F(x)-x| < k+2$. We may also suppose that $\min_{x \in \mathbb{R}} |F(x)-x| \geq k+1$ (otherwise, we can conclude directly from the induction hypothesis). As usual, we may assume:
$$k+1 \leq \min_{x \in \mathbb{R}} (F(x)-x) < k+2.$$
Let $x_{0}$ be a point which satisfies:
$$\min_{x \in \mathbb{R}}(F(x)-x)=F(x_{0})-x_{0}.$$
The same way as for the initialization, we can find a point $x_{1} \in (x_{0},x_{0}+1)$ such that $F(x_{0})<x_{1}+k+1<F(x_{0})+1$. Then $F(x_{0})< F(x_{1}) < F(x_{0})+1$. Therefore, there exists a homeomorphism $h$ which fixes a neighbourhood of $F(x_{0})$ and which satisfies:
$$h(F(x_{1}))<x_{1}+k+1.$$
The induction hypothesis allows then us to finish the induction and the proof of Proposition \ref{fragcircle}.
\end{proof}

\subsection{Proof of Proposition \ref{frag}}

The technique is exactly the same as in the proof of Proposition \ref{cl}, using Proposition \ref{fragcircle} instead of the Eisenbud-Hirsch-Neumann theorem. Thus the explanations will be briefer.

\subsubsection{Lower bound for the fragmentation norm}

Let $f$ be an element in $\mathrm{Homeo}_{0}(\mathbb{A})$ with $\mathrm{Frag}_{0}(f) \geq 1$ and $F$ be a lift of $f$. Fix a decomposition of $f$ as a product of $\mathrm{Frag}_{0}(f)$ homeomorphisms supported in discs. Let $k_{0}$ (respectively $k_{1}$) be the number of homeomorphisms appearing in this decomposition whose support meets the lower boundary $\mathbb{R} / \mathbb{Z} \times \left\{ 0 \right\}$ (resp. the upper boundary $\mathbb{R} / \mathbb{Z} \times \left\{ 1 \right\}$) of $\mathbb{A}$. Note that $k_{0}+k_{1} \leq \mathrm{Frag}_{0}(f)$ as a disc doesn't touch both components of the boundary, by definition. First, suppose $k_{0}\geq 2$ and $k_{1}\geq 2$. By Proposition \ref{fragcircle}, there exist points $x_{0}$ and $x_{1}$ in $\mathbb{R}$ such that:
$$|F_{0}(x_{0})-x_{0}|< k_{0}-1$$
and:
$$|F_{1}(x_{1})-x_{1}|< k_{1}-1.$$
From these inequalities, we get:
$$\alpha(f) < k_{0}-1+2+k_{1}-1\leq \mathrm{Frag}_{0}(f),$$
which gives the lower bound in Proposition \ref{frag}.
In the cases $k_{0}=1$ and $k_{1} \geq 2$ or $k_{1}=1$ and $k_{0} \geq 2$, one of the inequalities
$$|F_{0}(x_{0})-x_{0}| \leq k_{0}-1$$
and
$$|F_{1}(x_{1})-x_{1}| \leq k_{1}-1$$
is indeed strict, which allows us to conclude.

In the case $k_{0}=0$ and $k_{1} \geq 2$  (the case $k_{1}=0$ and $k_{0} \geq 2$ is symmetric), we have, as $F_{0}=Id$:
$$\alpha(f) < |F_{1}(x_{1})-x_{1}| +1 \leq k_{1} \leq \mathrm{Frag}_{0}(f).$$

In the case $(k_{0},k_{1})=(1,1)$, we have $\alpha(f) < 1$ and $\mathrm{Frag}_{0}(f)\geq 2$ so the inequality $\alpha(f)<\mathrm{Frag}_{0}(f)$ holds.

In the cases $(k_{0},k_{1}) \in \left\{(0,0),(0,1),(1,0) \right\}$, we have $\alpha(f)=0$ so the inequality $\alpha(f)<\mathrm{Frag}_{0}(f)$ holds. 

\subsubsection{Upper bound for the fragmentation norm}

Let $f$ be an element in $\mathrm{Homeo}_{0}(\mathbb{A})$ with lift $F$. As usual, we may assume: $\alpha(f)=p_{1} \circ F(x_{0},1)-p_{1} \circ F(x_{0},0)$. Consider a lift of $f$ which satisfies:
$$-1 < F_{0}(x_{0})-x_{0} \leq 0.$$
Then $F_{1}(x_{0})=F_{0}(x_{0})+\alpha(f)< E(\alpha(f))+1.$
Using Proposition \ref{fragcircle}, we can see that, after composing by at most $E(\alpha(f))+2$ homeomorphisms supported in discs which touch the upper boundary, we get a homeomorphism with a lift which fixes the upper boundary and, by composing by two more homeomorphisms supported in discs we get a homeomorphism with a lift which fixes both boundary components. Then, by composing by four homeomorphisms supported in discs, we get a homeomorphism with a lift which fixes a neighbourhood of the boundary. Finally, by the result by Burago, Ivanov and Polterovich (see the proof of Theorem 1.17 in \cite{BIP}), such a homeomorphism may be written as a product of $28$ homeomorphisms supported in discs. Using proposition \ref{openannulus}, we see that this might be improved to $4$.

Then, an approximation argument combined with the fragmentation lemma yields the case $r>0$.

\section{Generalization to other surfaces}

A similar construction as the one made in section 2 can be carried out on every compact surface $M$ with boundary to obtain quasi-morphisms. However, in those cases, we do not know the dimension of the space of quasi-morphism: we just have a minoration of it.

\subsection{Case of open surfaces}

Suppose $M$ is a non-compact surface with $p$ boundary components which are circles. Let us fix such a boundary component of $M$: $C_{1}$. Take a path homeomorphic to the half-line which begins on $C_{1}$, goes to infinity, and touches the boundary component $C_{1}$ only at endpoint. Consider the cyclic covering $p:\tilde{M} \rightarrow M$ associated to this path. Now, a homeomorphism $f$ in $\mathrm{Homeo}_{0}(M)$ admits a unique lift $F:\tilde{M}\rightarrow \tilde{M}$ which is compactly supported. The translation number of the restriction of $F$ to $p^{-1}(C_{1})$ gives rise to a quasi-morphism. With this method, $p$ independant quasi-morphisms can be built. Moreover, if the commutator length is bounded on the group $\mathrm{Diff}_{0}^{r}(\mathrm{int}(M))$, where $\mathrm{int}(M)$ denotes $M- \partial M$, then it can be shown with the same techniques as in the case of the annulus that the vector space of homogeneous quasi-morphisms is p-dimensional. In particular, the vector space of homogeneous quasi-morphism is one-dimensional in the case of the half-open annulus $\mathbb{R}/\mathbb{Z} \times [0,1)$. However, it is not known whether the commutator length is bounded or not on the group $\mathrm{Diff}_{0}^{r}(\mathrm{int}(M))$ when $\mathrm{int}(M)$ is different from the open annulus or from $\mathbb{R}^{2}$. 

\subsection{Case of other compact oriented surfaces with boundary}

When $M$ is a compact oriented surface with boundary which is different from the closed disc or the closed annulus, its universal covering $\tilde{M}$ may be seen as a subspace of the Poincaré disc $\mathbb{D}$. We endow $M$ and $\mathbb{D}$ with riemmannian metrics of constant curvature $-1$ such that $M$ has a geodesic boundary and the projection $\tilde{M} \rightarrow M$ is a riemannian covering. Denote by $d$ the associated distance. Given two points $x$ and $y$ on $\mathbb{D}$ and an oriented geodesic $g$ which contains both $x$ and $y$, we define $a(x,y,g)=d(x,y)$ if the geodesic from $x$ to $y$ has the same orientation as $g$ and $a(x,y,g)=-d(x,y)$ otherwise.

Let us take a homeomorphism $f$ in $\mathrm{Homeo}_{0}(M)$. The homeomorphism $f$ admits a canonical lift $F:\tilde{M} \rightarrow \tilde{M}$ which is the identity on the limit set of $\tilde{M}$ at infinity. Given a point $x$ on the boundary component $C$ of $\tilde{M}$, which is a geodesic, define:
$$q(x)=\lim_{n \rightarrow + \infty} \left( \frac{a(x,F^{n}(x),C)}{n} \right).$$
This number is the translation number of $F$ restricted to $C$ and does not depend on the point $x$ chosen on $C$. With this construction, we can build $p$ independant quasi-morphisms, where $p$ is equal to the number of boundary component of $M$. Here also, the vector space of quasi-morphisms on $\mathrm{Diff}_{0}^{r}(M)$ is $p$-dimensional if the commutator length on $\mathrm{Diff}^{r}_{0}(\mathrm{int}(M))$ is bounded.

\section*{Acknowledgements}

I would like to thank Frederic Le Roux for suggesting me to write this paper and for his careful reading of it.

\appendix
\section{Appendix: A $C^{0}$-fragmentation lemma for diffeomorphisms}

In this section, we will prove lemma \ref{fragannulus}. This lemma follows directly from the following proposition which is more general:

\begin{proposition}\label{fraglemma}
Let $M$ be a compact surface, possibly with boundary and $(U_{i})_{1 \leq i \leq n}$ be a finite open covering of $M$. Denote by $d$ a distance on $\mathrm{Homeo}_{0}(M)$ compatible with the $C^{0}$-topology. Then, for every $\epsilon>0$, there exists $\alpha>0$ such that, given a diffeomorphism $f$ in $\mathrm{Diff}^{r}_{0}(M)$ satisfying $d(f,Id_{M})<\alpha$, there exist diffeomorphisms $f_{1}, f_{2}, \ldots, f_{n}$ such that:
\begin{itemize}
\item $f=f_{1} \circ f_{2} \circ \ldots \circ f_{n}$.
\item for every index $i$, $supp(f_{i}) \subset U_{i}$.
\item $d(f_{i},Id_{M})<\epsilon$.
\end{itemize}
\end{proposition}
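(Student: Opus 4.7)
The plan is to proceed by induction on the number $n$ of open sets. The case $n=1$ is immediate ($f_{1} = f$). For the inductive step, I would first shrink the cover to open sets $V_{i}$ with $\overline{V_{i}} \subset U_{i}$ still covering $M$, set $A = M \setminus (V_{1} \cup \cdots \cup V_{n-1})$ (a compact subset of $V_{n} \subset U_{n}$), and reduce the problem to producing a single diffeomorphism $f_{n} \in \mathrm{Diff}_{0}^{r}(M)$ with $\mathrm{supp}(f_{n}) \subset U_{n}$, $d(f_{n}, Id) < \epsilon$, and $f_{n} = f$ on an open neighborhood of $A$. Once $f_{n}$ is in hand, the residual diffeomorphism $g := f_{n}^{-1} \circ f$ is the identity near $A$, hence $\mathrm{supp}(g) \subset M \setminus A \subset U_{1} \cup \cdots \cup U_{n-1}$; by choosing $\alpha$ small enough, $g$ also remains $C^{0}$-close to the identity, and the induction hypothesis applied to $g$ with the cover $(U_{1}, \ldots, U_{n-1})$ of its support then yields the remaining factors.

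To construct $f_{n}$, I would cover a closed neighborhood $\overline{W}$ of $A$ inside $U_{n}$ by finitely many small coordinate charts $\varphi_{j}: B(0,3) \to U_{n}$ with $\overline{W} \subset \bigcup_{j} \varphi_{j}(B(0,1))$. By choosing $\alpha$ smaller than a fraction of the minimum chart scale, one ensures $f(\varphi_{j}(\overline{B(0,2)})) \subset \varphi_{j}(B(0,3))$ for each $j$, so that $f$ is readable in each chart as a $C^{0}$-small self-map of a ball. Then $f_{n}$ is built inductively chart by chart: starting from $h_{0} = Id$, at step $j$ I would modify $h_{j-1}$ on a single chart via a smooth cutoff $\chi_{j}$ supported in $\varphi_{j}(B(0,2))$ and equal to $1$ near $\varphi_{j}(\overline{B(0,1)})$, producing $h_{j}$ which equals $f$ on a slight enlargement of $\bigcup_{k \leq j} \varphi_{k}(\overline{B(0,1)})$ and agrees with $h_{j-1}$ outside a slight enlargement of $\varphi_{j}(B(0,2))$. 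Setting $f_{n} = h_{J}$ where $J$ is the number of charts gives the desired factor.

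The main obstacle is ensuring each chart-local modification yields a genuine $C^{r}$-diffeomorphism. A naive convex interpolation $(1-\chi)\, h_{j-1} + \chi\, f$ in chart coordinates is $C^{0}$-small but its Jacobian contains the term $\chi(Df - Dh_{j-1})$, which involves first-order data not controlled by the $C^{0}$ distance. The resolution I would invoke is a $C^{0}$-local contractibility statement for $\mathrm{Diff}_{0}^{r}(M)$: for $\alpha$ small enough, every $f \in \mathrm{Diff}_{0}^{r}(M)$ with $d(f, Id) < \alpha$ is joined to $Id$ by a $C^{r}$-isotopy $(f_{t})_{t \in [0,1]}$ of controlled $C^{0}$-size (available for compact surfaces through classical work of the Cerf--Palais type, or by a direct elementary construction in coordinates). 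Given such an isotopy, I would replace the linear formula by $f_{n}(x) = f_{\chi(x)}(x)$ with $\chi$ a smooth cutoff equal to $1$ on a neighborhood of $A$ and supported in $U_{n}$; the Jacobian of this map is then close to the identity uniformly when the whole isotopy is $C^{0}$-small, so $f_{n}$ is indeed a diffeomorphism $C^{0}$-close to the identity. This closes the construction, and with it the induction.
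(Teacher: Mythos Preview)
Your inductive scheme is essentially the same as the paper's: peel off one factor supported in a single chart, then recurse. The gap is exactly at the point you flagged as the ``main obstacle,'' and your proposed resolution does not close it.

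You correctly note that the linear interpolation $(1-\chi)h_{j-1}+\chi f$ fails because its Jacobian contains $\chi(Df-Dh_{j-1})$, a $C^{1}$ quantity. But the replacement $f_{n}(x)=f_{\chi(x)}(x)$ has the very same defect. Differentiating gives
\[
Df_{n}(x)\;=\;\bigl(\partial_{t}f_{t}(x)\bigr)\big|_{t=\chi(x)}\otimes D\chi(x)\;+\;\bigl(D_{x}f_{t}\bigr)(x)\big|_{t=\chi(x)}.
\]
The second summand is the spatial derivative of $f_{t}$; at $t=1$ it equals $Df$, which is completely uncontrolled by the hypothesis $d(f,Id)<\alpha$. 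The first summand involves the time-velocity of the isotopy, which is likewise not bounded by the $C^{0}$ size of the track. So the claim that ``the Jacobian of this map is then close to the identity uniformly when the whole isotopy is $C^{0}$-small'' is false, and nothing in Cerf--Palais-type local contractibility supplies the missing $C^{1}$ control: an $f$ that is $C^{0}$-close to $Id$ can have arbitrarily wild first derivatives, and any isotopy ending at $f$ inherits that wildness at $t=1$.

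The paper circumvents this by never interpolating $f$ against the identity in the bulk. Instead it reduces to disc charts $K_{i}\subset U_{i}$ and proves an extension lemma (Lemma~\ref{extensionlemma}): given $f$ which is $C^{0}$-small, find $g$ supported in $U_{i}$, equal to $f$ near $K_{i}$, and $C^{0}$-small. The construction works along the codimension-one boundary $\partial K_{i}$: first extend $f_{|\partial K_{i}}$ to a $C^{0}$-small diffeomorphism of a tubular neighbourhood (Lemma~\ref{extembcirc}), then correct so that the composition fixes $\partial K_{i}$ pointwise; at that stage the generating vector field of the obvious isotopy \emph{vanishes} on $\partial K_{i}$, hence is genuinely small on a thin tube, and cutting it off there produces a $C^{0}$-small flow whose time-one map does the job (Lemma~\ref{extembann}). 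The point is that smallness of the vector field is obtained geometrically (it vanishes on a hypersurface), not from any assumed $C^{1}$-smallness of $f$. Your argument would go through if you replaced the isotopy-cutoff step by this two-stage tubular-neighbourhood construction.
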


To prove this proposition, we will need the following extension lemma.

The half-closed disk is the subset of $\mathbb{R}^{2}$:
$$\left\{(x,y)\in \mathbb{R}^{2}, \left\{ \begin{array}{l} x\geq 0 \\ x^{2}+y^{2} \leq 1 \end{array} \right. \right\} .$$
\begin{lemma}\label{extensionlemma}
Let $U$ be an open set, $F$ be a closed set in $M$ and $N$ be an open neighbourhood of $F$. Let $K$ be a subset of $U$ which is $C^{\infty}$-diffeomorphic to a closed disc if $K\cap \partial M=\emptyset$ or to a half-closed disk if $K\cap \partial M \neq \emptyset$. Then, for every $\epsilon>0$, there exists $\beta>0$ such that, given a diffeomorphism $f$ in $\mathrm{Diff}_{0}^{r}(M)$ satisfying $d(f,Id_{M})<\beta$ and $f=Id_{M}$ on $N$, there exists a diffeomorphism $g$ in $\mathrm{Diff}_{0}^{r}(M)$ such that:
\begin{itemize}
\item $supp(g) \subset U$.
\item $f=g$ in a neighbourhood of $K$.
\item $g=Id_{M}$ in a neighbourhood of $F$.
\item $d(g,Id_{M})<\epsilon$.
\end{itemize}
\end{lemma}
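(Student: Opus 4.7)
The plan is to extend the restriction $f|_{K}$ to an ambient $C^{r}$ diffeomorphism $g$ of $M$ supported in a $C^{\infty}$ (half-)disk neighbourhood of $K$ chosen to avoid $F$. First, shrink $N$: choose an open $N' \subset N$ with $F \subset N'$ and $\overline{N'} \subset N$, and pick nested $C^{\infty}$ (half-)disks $K \subset K' \subset K'' \subset U \setminus \overline{N'}$ with $K \subset \mathrm{int}(K')$ and $K' \subset \mathrm{int}(K'')$. Any diffeomorphism supported in $\mathrm{int}(K'')$ is then automatically the identity on a neighbourhood of $F$, which makes the condition $g = \mathrm{Id}_M$ near $F$ free; the condition $g = f$ on the (possibly non-empty) intersection $K \cap N$ is also automatic because $f = \mathrm{Id}_M$ on $N$.

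Next, choose $\beta > 0$ small enough (depending on the nested data) so that every $f$ with $d(f,\mathrm{Id}_M) < \beta$ satisfies $f(K') \subset \mathrm{int}(K'')$. Then $j := f|_{K'} : K' \to \mathrm{int}(K'')$ is a $C^{r}$ embedding of a (half-)disk into the interior of another (half-)disk, $C^{0}$-close to the inclusion. The key step is to extend $j$ to a $C^{r}$ diffeomorphism $g$ of $M$ supported in $\mathrm{int}(K'')$ and coinciding with $f$ on a neighbourhood of $K$. In the topological case $r = 0$, this is obtained by applying the Schoenflies theorem and an Alexander-type interpolation in a chart identifying $K''$ with the standard closed (half-)disk. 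For $r \geq 1$, one combines the smooth two-dimensional Schoenflies theorem with a $C^{r}$ isotopy extension argument in the same chart. The $C^{0}$-size of $g$ is controlled by that of $j|_{K''}$, so $d(g,\mathrm{Id}_M) < \epsilon$ is achieved by further shrinking $\beta$.

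The main obstacle is the smooth extension step for $r \geq 1$. Because only the $C^{0}$-norm of $f$ is controlled, the derivatives of $j$ on $K'$ can be arbitrarily large, so one cannot simply interpolate linearly between $\mathrm{incl}$ and $j$ in the chart (such an interpolation need not produce embeddings). The standard remedy in dimension two is to perform the construction in two stages. First, since $j(\partial K')$ and $\partial K'$ are two disjoint $C^{r}$ Jordan curves in $\mathrm{int}(K'')$, the smooth Jordan--Schoenflies theorem produces a smooth ambient diffeomorphism of $K''$ supported in a thin annular neighbourhood of $\partial K'$ inside $\mathrm{int}(K'') \setminus K$ that carries $j(\partial K')$ to $\partial K'$; the $C^{0}$-size of this diffeomorphism is controlled by the Hausdorff distance between the two curves, hence by $\beta$. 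Second, on the resulting smooth annular region between $\partial K$ and $\partial K''$ one extends by a smooth radial (Alexander-type) interpolation between $j$ near $\partial K$ and the identity near $\partial K''$. Gluing produces $g$ with all four required properties, and a careful choice of $\beta$ makes $d(g,\mathrm{Id}_M) < \epsilon$.
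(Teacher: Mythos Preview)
Your construction has a genuine gap at the very first step: you require $K''\subset U\setminus\overline{N'}$, which forces $K\cap F=\emptyset$. Nothing in the hypotheses guarantees this, and in the only place the lemma is used (the induction in Proposition~\ref{fraglemma}) it is typically false: there $K=K_{i+1}$ and $F=\bigcup_{j\le i}K_j$, and these closed disks cover $M$, so they must overlap. On the overlap you need simultaneously $g=f$ (near $K$) and $g=\mathrm{Id}$ (near $F$); this is consistent because $f=\mathrm{Id}$ on $N$, but your Schoenflies/Alexander construction, being supported in $K''$ and built only from the embedding $j$, gives no mechanism for $g$ to stay equal to the identity on the part of $K''$ that meets $N'$. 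The paper's proof is organized around exactly this difficulty: it works in a tubular neighbourhood $\partial K\times(-1,1)$ of $\partial K$ and threads through every extension step a finite union of intervals $I\subset\partial K$ (encoding where $F$ sits) on which the constructed diffeomorphism is forced to be the identity. That is why each of the auxiliary Lemmas~\ref{extembcirc}--\ref{extembhalfplane} carries the clause ``$g_{|I\times(\cdot)}=\mathrm{Id}$''.

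There is also a softer problem in your smooth extension step. You correctly observe that, with only $C^0$ control on $f$, naive interpolation between $j$ and the inclusion can fail to be an embedding; but your two-stage remedy does not escape this. Stage~1 asks for a $C^0$-small ambient diffeomorphism carrying $j(\partial K')$ onto $\partial K'$: this is precisely the content of Lemma~\ref{extembcirc} (Khanevsky), not a formal consequence of smooth Schoenflies, which gives existence without any $C^0$ estimate. Even granting stage~1, you are then left with a circle self-map $\phi\circ j|_{\partial K'}$ that is $C^0$-close to the identity but has uncontrolled derivative, and your stage~2 ``radial Alexander-type interpolation'' is exactly the kind of linear interpolation you already ruled out. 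The paper instead extends in two genuinely different substeps: first the circle embedding (Lemma~\ref{extembcirc}), then the full germ in the normal direction via an explicit flow construction (Lemma~\ref{extembann}); this second step is where the derivative issue is actually handled.
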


Let us see first how this lemma implies Proposition \ref{fraglemma}.

\begin{proof}[Proof of Proposition \ref{fraglemma}]
First, we claim that it suffices to prove the proposition when each open set $U_{i}$ is homeomorphic to an open disc. Indeed, suppose the proposition holds for a covering by open disks. Take an arbitrary finite covering $(U_{i})_{1\leq i \leq n}$. Then we can find a covering by open disks $D_{i,j}$ with $1\leq i \leq n$ and $1\leq j \leq k_{i}$ such that $D_{i,j}$ is included in $U_{i}$. Then, if $f$ is a diffeomorphism of $M$ sufficiently close to the identity, there exist diffeomorphisms $f_{i,j}$ close to the identity such that:
$$\left\{
\begin{array}{l}
supp(f_{i,j})\subset D_{i,j} \\
f=f_{1,1} \circ f_{1,2} \circ \ldots \circ f_{1,k_{1}} \circ f_{2,1} \circ \ldots \circ f_{n,k_{n}}
\end{array}
\right.
.$$
It suffices to take $f_{i}=f_{i,1} \circ f_{i,2} \circ \ldots \circ f_{i,k_{i}}$to conclude the proof for an arbitrary finite covering.

Suppose now that each open set $U_{i}$ is homeomorphic to an open disc. For each index $i$, take a subset $K_{i}$ of $U_{i}$ diffeomorphic to a closed disc in such a way that:
$$\bigcup_{1 \leq i \leq n} K_{i}=M.$$
We show by induction on $i$ the following statement  (the case $i=n$ proves Lemma \ref{fraglemma}): for every $\epsilon>0$, there exists $\alpha>0$ such that, given a diffeomorphism $f$ in $\mathrm{Diff}_{0}^{r}(M)$ satisfying $d(f,Id_{M})<\alpha$, there exist diffeomorphisms $f_{1}, f_{2}, \ldots, f_{i}$ in $\mathrm{Diff}_{0}^{r}(M)$ such that:
\begin{enumerate}
\item $f=f_{1} \circ f_{2} \circ \ldots \circ f_{i}$ in a neighbourhood of $\bigcup_{j\leq i} K_{j}$.
\item $\forall j\leq i, supp(f_{j}) \subset U_{j}$.
\item $d(f_{j},Id_{M})<\epsilon$.
\end{enumerate}
The above statement is true for $i=1$ by lemma \ref{extensionlemma}.
Suppose the above statement holds for an integer $i$. Fix $\epsilon>0$. Let $\beta$ be given by Lemma \ref{extensionlemma} applied with $F=\bigcup_{j \leq i}K_{j}$, $K=K_{i+1}$ and $U=U_{i+1}$. Using the induction hypothesis for small enough $\epsilon'$, there exists $\alpha>0$ such that if we take a diffeomorphism $f$ $\alpha$-close to the identity, we can get a family of diffeomorphisms $(f_{j})_{1\leq j \leq i}$ which satisfies 1., 2., and 3. and such that $f_{i}^{-1} \circ f_{i-1}^{-1} \circ \ldots \circ f_{1}^{-1} \circ f$ is $\beta$-close to the identity. By Lemma \ref{extensionlemma}, we can find a diffeomorphism $f_{i+1}$ in $\mathrm{Diff}_{0}^{r}(M)$ such that :
\begin{enumerate}
\item $supp(f_{i+1}) \subset U_{i+1}$.
\item $f_{i+1}=f_{i}^{-1} \circ f_{i-1}^{-1} \circ \ldots \circ f_{1}^{-1} \circ f$ in a neighbourhood of $K_{i+1}$.
\item $f_{i+1}=Id_{M}$ in a neighbourhood of $\bigcup_{j\leq i}K_{j}$.
\item $d(f_{i+1},Id_{M})<\epsilon$.
\end{enumerate}
The properties 2. and 3., together with the induction hypothesis implie then that $f=f_{1} \circ f_{2} \circ \ldots \circ f_{i+1}$ on a neighbourhood of $\bigcup_{j \leq i}K_{j}$. This concludes the proof.
\end{proof}

To prove Lemma \ref{extensionlemma}, we need some specific extension lemmas. The first lemma is proved in the appendix of \cite{EPP} by M. Khanevsky.

\begin{lemma} \label{extembcirc}
Let $\epsilon>0$, $\epsilon_{1}\in (0,1)$, $r\in \mathbb{N} \cup \left\{\infty\right\}$. Consider a finite union of closed intervals $I \subset \mathbb{S}^{1}$. Then there exists $\alpha>0$ such that, for every $C^{r}$-embedding $f:\mathbb{S}^{1}\times(-\epsilon_{1},\epsilon_{1}) \rightarrow \mathbb{S}^{1} \times (-1,1)$ which satisfies:
$$\left\{
\begin{array}{l}
d(f,Id)<\alpha \\
f_{|I\times(-\epsilon_{1},\epsilon_{1})}=Id
\end{array}
\right. ,$$
there exists a diffeomorphism $g$ in $\mathrm{Diff}_{0}^{r}(\mathbb{S}^{1} \times (-1,1))$ such that:
\begin{enumerate}
\item $d(g,Id)<\epsilon$
\item $g_{|\mathbb{S}^{1} \times {0}}=f_{|\mathbb{S}^{1} \times {0}}$
\item $g_{|I\times(-1,1)}=Id$.
\end{enumerate}
\end{lemma}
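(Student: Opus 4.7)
The plan is to split the problem along the components of $\mathbb{S}^1\setminus I$ and, on each strip, to construct the extension using the $C^r$-embedded arc structure of $\gamma_i:=f|_{\overline{J_i}\times\{0\}}$ together with the collar provided by $f$ itself. A key preliminary observation is that because $f$ is the identity on the two-dimensional set $I\times(-\epsilon_1,\epsilon_1)$, every partial derivative of $f$ of order up to $r$ coincides with the corresponding derivative of the identity at each point of this set; in particular $f$ is tangent to the identity to order $r$ along the vertical walls $\partial J_i\times(-\epsilon_1,\epsilon_1)$. This tangency is what allows a componentwise construction to glue $C^r$-smoothly across the walls.

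Write $\mathbb{S}^1\setminus I=J_1\sqcup\cdots\sqcup J_k$. It suffices to construct, for each $i$, a $C^r$-diffeomorphism $g_i\in\mathrm{Diff}_0^r(\mathbb{S}^1\times(-1,1))$ equal to the identity on $I\times(-1,1)$ and on $J_j\times(-1,1)$ for all $j\ne i$, satisfying $g_i(x,0)=f(x,0)$ for every $x\in J_i$, and with $d(g_i,\mathrm{Id})<\epsilon/k$. The required $C^r$-smoothness of $g_i$ across $\partial J_i\times(-1,1)$ is automatic from the preliminary observation (all $r$-jets match the identity along the walls). The composition $g=g_1\circ\cdots\circ g_k$ then has supports in the pairwise disjoint closed strips $\overline{J_i}\times(-1,1)$, hence equals the identity on $I\times(-1,1)$, is $\epsilon$-close to the identity, and coincides with $f$ on $\mathbb{S}^1\times\{0\}$: indeed, on $I\times\{0\}$ both sides are the identity, and on $J_i\times\{0\}$ only $g_i$ acts nontrivially, sending $(x,0)$ to $f(x,0)$, a point which---for $\alpha$ smaller than half the minimum distance between distinct components of $\mathbb{S}^1\setminus I$---remains in $\overline{J_i}\cup I$ and is therefore fixed by the subsequent $g_j$'s.

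For a fixed component $J=J_i$, the problem reduces to the following two-dimensional claim: given the horizontal arc $\overline{J}\times\{0\}$ and the $C^0$-close $C^r$-arc $\gamma$ in the strip $\overline{J}\times(-1,1)$, both having the common endpoints $\{(a,0),(b,0)\}$ and being $r$-tangent there, construct a $C^r$-diffeomorphism of the strip, equal to the identity near $\partial J\times(-1,1)$ and near $\overline{J}\times\{\pm1\}$, $C^0$-$(\epsilon/k)$-close to the identity, and sending the horizontal arc pointwise to $\gamma$ under the prescribed parametrization. This is a standard Schoenflies-type extension, handled by choosing a $C^r$-tubular neighborhood of $\gamma$ inside the strip and performing a transverse shear of the form $(x,s)\mapsto (x,s-\chi(s)u(x))$, where $u$ describes the horizontal arc as a graph in the tubular coordinates, $\chi$ is a smooth cutoff vanishing at the boundary of the tube, and the shear is extended by the identity outside.

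The main obstacle is that the naive normal-bundle tubular neighborhood of $\gamma$ need not be uniformly wide---its width depends on $\gamma$'s curvature, which is uncontrolled since $f$ is only $C^0$-close to the identity. The fix is to use $f$'s own transverse foliation $\{f(\{x\}\times(-\epsilon_1,\epsilon_1))\}_{x\in J}$ of the collar $V_i=f(J\times(-\epsilon_1,\epsilon_1))$ as the tubular structure: this foliation exists with uniform width $\epsilon_1$ because $f$ is a $C^r$-embedding, and the horizontal arc $\overline{J}\times\{0\}\cap V_i$ is transverse to it (for $\alpha$ small) hence a graph over the $f$-fibers. Performing the shear in these $f$-adapted coordinates ensures the result is a genuine $C^r$-diffeomorphism with Jacobian bounded away from zero independently of $f$'s derivatives, and the resulting $C^0$-displacement of $g_i$ is bounded by $\|u\|_{C^0}$, which tends to $0$ as $\alpha\to 0$.
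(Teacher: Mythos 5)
The paper does not prove this lemma at all (it is quoted from Khanevsky's appendix to \cite{EPP}), so what matters is whether your argument stands on its own, and it has a genuine gap at its core. The decisive step is your claim that, for $\alpha$ small, the horizontal arc $\overline{J_i}\times\{0\}$ is transverse to the foliation by the curves $f(\{x\}\times(-\epsilon_{1},\epsilon_{1}))$ and hence a graph over the $f$-fibers, with a shear whose ``Jacobian is bounded away from zero independently of $f$'s derivatives''. But $C^{0}$-closeness of $f$ to the identity gives no control on the tangent directions of these fibers: a $C^{\infty}$ diffeomorphism supported in a disc of radius $\alpha/2$ centred at a point of $J_i\times\{0\}$ is $\alpha$-close to the identity, equal to the identity on $I\times(-\epsilon_{1},\epsilon_{1})$, and can make the image of a vertical segment meet $\mathbb{S}^{1}\times\{0\}$ tangentially or in several points. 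So the graph function $u$ may fail to exist (it can be multivalued) and, where it exists, need not even be $C^{1}$; the shear construction cannot be defined, no matter how small $\alpha$ is chosen. This is exactly the real difficulty of the lemma --- a $C^{r}$ curve that is merely $C^{0}$-close to the round circle need not be a graph in any uniformly chosen coordinates --- and it is the point Khanevsky's argument is designed to get around, not something that follows from shrinking $\alpha$.

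There is a second, independent problem: even granting graphicality, the fiberwise shear sends the point where the horizontal arc meets the fiber through $f(x,\cdot)$ to $f(x,0)$; that is, it maps the arc onto $\gamma$ with the fiber-projection parametrization, not with the required parametrized agreement $g(x,0)=f(x,0)$. One would still need to compose with a reparametrization along the curve which is $C^{0}$-small and the identity on $I$, and this step is absent. The outer layer of your proposal is fine and worth keeping: the decomposition along the components of $\mathbb{S}^{1}\setminus I$, the observation that the $r$-jets of $f$ agree with those of the identity along $\partial I\times(-\epsilon_{1},\epsilon_{1})$ (for nondegenerate intervals of $I$), and the check, via injectivity, that $f(J_i\times\{0\})$ stays in $\overline{J_i}\times(-\epsilon_{1},\epsilon_{1})$ for small $\alpha$. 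But even there the $C^{r}$-gluing at the walls is asserted rather than proved: jet matching of $f$ at the walls only helps once the per-strip construction is shown to converge to the identity in the $C^{r}$ sense at the walls, and that depends on the interior construction which is precisely what is missing.
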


The following lemma deals with embeddings of the annulus which preserve a circle.

\begin{lemma} \label{extembann}
Let $\epsilon_{1}\in [0,1)$, $r\in \mathbb{N} \cup \left\{\infty\right\}$ and $I$ denote a finite union of closed intervals in $\mathbb{S}^{1}$. Denote by $f$ a $C^{r}$-embedding of $\mathbb{S}^{1}\times (-\epsilon_{1},\epsilon_{1})$ in $\mathbb{S}^{1}\times (-1,1)$ which fixes pointwise $\mathbb{S}^{1} \times \left\{0\right\}$ and $I\times (-\epsilon_{1},\epsilon_{1})$. Then, for any $\epsilon>0$, there exists a diffeomorphism $g$ in $\mathrm{Diff}_{0}^{r}(\mathbb{S}^{1} \times (-1,1))$ fixing pointwise $\mathbb{S}^{1} \times \left\{0\right\}$ which satisfies:
\begin{enumerate}
\item $g=f$ in a neighbourhood of $\mathbb{S}^{1}\times\left\{0\right\}$.
\item $d(g,Id)<\epsilon$
\item $g_{|I\times(-1,1)}=Id$.
\end{enumerate}
\end{lemma}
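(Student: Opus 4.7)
The strategy is to exploit uniform continuity: since $f$ fixes $\mathbb{S}^{1} \times \left\{0\right\}$ pointwise, on a sufficiently thin annular neighborhood $f$ is uniformly close to the inclusion, and we can taper $f$ to the identity by a smooth cutoff in the $t$-coordinate, in the spirit of the proof of Lemma \ref{extembcirc}.

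Fix $\eta>0$ small, to be specified. By uniform continuity of $f$ on a compact neighborhood of $\mathbb{S}^{1}\times\left\{0\right\}$, there exists $\delta_0 \in (0, \epsilon_1)$ such that, writing the lift of $f$ to the universal cover as $\tilde f(x,t) = (x + u(x,t),\, t + v(x,t))$ with $u,v$ periodic in $x$, we have $|u|,|v|\leq \eta$ on $\mathbb{R}\times[-\delta_0, \delta_0]$. The hypotheses give $u(\cdot,0)\equiv v(\cdot,0)\equiv 0$ and $u\equiv v\equiv 0$ on $\tilde I \times (-\epsilon_1, \epsilon_1)$. Fix a smooth cutoff $\mu\colon (-1,1)\to[0,1]$ with $\mu=1$ on $[-\delta_0/2, \delta_0/2]$ and $\mathrm{supp}(\mu)\subset[-\delta_0,\delta_0]$, and set
\[
\tilde g(x,t) = \bigl(x + \mu(t)\, u(x,t),\ t + \mu(t)\, v(x,t)\bigr),
\]
extended by the identity outside $\mathbb{R}\times[-\delta_0, \delta_0]$. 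This descends to a $C^r$ map $g$ on $\mathbb{S}^{1}\times(-1,1)$ which agrees with $f$ on $\mathbb{S}^{1}\times[-\delta_0/2,\delta_0/2]$, is the identity outside $\mathbb{S}^{1}\times(-\delta_0,\delta_0)$, fixes $\mathbb{S}^{1}\times\left\{0\right\}$ pointwise, coincides with the identity on $I\times(-1,1)$, and satisfies $d(g, \mathrm{Id}) \leq \eta$.

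The remaining issue is to verify that $g$ is a $C^r$ diffeomorphism. For $r\geq 1$, one computes the Jacobian of $\tilde g$ directly. Using $u(\cdot,0) = v(\cdot,0) = 0$ and the resulting continuity bound $|\partial_x u|,|\partial_x v| = o(1)$ as $\delta_0 \to 0$ on the strip, the dominant contribution in the transition region $\delta_0/2 \leq |t| \leq \delta_0$ is
\[
\det D\tilde g (x,t) = 1 + \mu(t)\, \partial_t v(x, 0) + O(\delta_0) + O(\eta/\delta_0).
\]
Because $f$ is an orientation-preserving embedding and $\mathbb{S}^1$ is compact, $\partial_t v(x,0) > -1$ uniformly in $x$, so $1 + \mu(t)\,\partial_t v(x,0) \geq c$ for some $c>0$ independent of $x,t$. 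Choosing first $\delta_0$ small and then $\eta$ small relative to $\delta_0$ makes the remainder smaller than $c/2$, so $\det D\tilde g>0$ everywhere and $\tilde g$ is a local diffeomorphism; combined with injectivity (which follows from the same smallness, via strict monotonicity of $t \mapsto t + \mu(t)v(x,t)$ for each fixed $x$), it is a global $C^r$ diffeomorphism. For $r=0$ one argues purely topologically: the same smallness of $v$ relative to $\|\mu'\|_\infty^{-1}$ yields strict monotonicity of the $t$-coordinate slice maps and a Schoenflies-type argument on each transition annulus shows that $g$ is a homeomorphism.

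The main technical obstacle lies in this Jacobian estimate: since $\partial_t v(x,0)$ is itself not small in $\eta$, one cannot treat the whole expression as a perturbation of the identity Jacobian. Instead one must split off the leading term $1 + \mu(t)\,\partial_t v(x,0)$, whose positivity is guaranteed only by orientation preservation and compactness of the central circle, and absorb the genuinely perturbative corrections $O(\delta_0) + O(\eta/\delta_0)$ by choosing $\delta_0$, then $\eta$, sufficiently small. The same dichotomy guides the $r = 0$ case: topological injectivity must be read off from the monotonicity of the slice maps rather than from a quantitative $C^1$ bound.
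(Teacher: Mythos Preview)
There is a genuine gap in the Jacobian estimate. You treat $\eta$ and $\delta_0$ as if they can be chosen independently, but they cannot: since $v(x,0)=0$, on the strip $|t|\le\delta_0$ one has (for $r\ge1$) $v(x,t)=\partial_t v(x,0)\,t+o(t)$, so the best $\eta$ is of order $C\delta_0$ with $C=\sup_x|\partial_t v(x,0)|$, and $\eta/\delta_0\to C$, not $0$. Concretely, the $(2,2)$-entry of $D\tilde g$ is $1+\mu'v+\mu\,\partial_t v$, whose leading part is $1+(t\mu(t))'\,\partial_t v(x,0)$, not $1+\mu(t)\,\partial_t v(x,0)$; you have silently dropped the $\mu'v$ contribution. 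Since $t\mu(t)$ equals $\delta_0/2$ at $t=\delta_0/2$ and $0$ at $t=\delta_0$, the mean value theorem forces $(t\mu)'$ to take the value $-1$ somewhere in the transition zone, and there the leading term is $1-\partial_t v(x,0)$. Nothing in the hypotheses bounds $\partial_t v(x,0)$ from above.

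A concrete failure: take $f(x,t)=(x,3t)$, so $u\equiv0$, $v=2t$. Your map is $\tilde g(x,t)=(x,\,t(1+2\mu(t)))$, and its second coordinate sends both $t=\delta_0/2$ and $t=3\delta_0/2$ to $3\delta_0/2$, for \emph{every} $\delta_0$. So $\tilde g$ is not injective; the same example defeats the $r=0$ monotonicity claim. This is exactly why the paper does not cut off the displacement directly. Instead it first builds a horizontal diffeomorphism $g_1$ (a smooth family of circle diffeomorphisms on each level $\{y=\mathrm{const}\}$, hence automatically a diffeomorphism) straightening the vertical foliation, and then extends the remaining purely vertical embedding by cutting off the \emph{vector field} generating an isotopy from $\mathrm{Id}$ to $g_1^{-1}\circ f$: a flow is always a diffeomorphism, and since the vector field vanishes on $\mathbb{S}^1\times\{0\}$ the cutoff introduces no large terms. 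Your approach could perhaps be salvaged by choosing the cutoff profile to depend on $\sup_x|\partial_t v(x,0)|$, but that is a different argument from the one you wrote.
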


\begin{proof}
Consider $\epsilon'>0$ such that the product of two diffeomorphisms in $\mathrm{Diff}_{0}^{r}(\mathbb{R} \times (-1,1))$ $\epsilon'$-close to the identity is $\epsilon$-close to the identity. We will first find a diffeomorphism $g_{1}$ $\epsilon'$-close to the identity which preserves the foliation with leaves $\mathbb{S}^{1} \times \left\{y \right\}$ and for which, for $\epsilon_{2}>0$ small enough, $g_{1}^{-1}\circ f$ preserves the germ at $\mathbb{S}^{1} \times \left\{0\right\}$ of the foliation with leaves $\left\{x \right\} \times (-1, 1) $. Then, we will find a diffeomorphism $g_{2}$ $\epsilon'$-close to the identity which equals to $g_{1}^{-1}\circ f$ on a neighbourhood of $\mathbb{S}^{1} \times \left\{0\right\}$ and take $g=g_{1} \circ g_{2}$.

Take $\delta_{1}<\epsilon_{1}$ such that $f^{-1}$ is well defined on $\mathbb{S}^{1}\times (-\delta_{1},\delta_{1})$ and, if the expression of $f^{-1}$ in coordinates is given by:
$$\begin{array}{rcl}
f^{-1}:\mathbb{S}^{1} \times (-\delta_{1},\delta_{1}) & \rightarrow & \mathbb{S}^{1}\times(-1,1) \\
(x,y) & \mapsto & (u(x,y),v(x,y))
\end{array}
,$$
then:
$$\forall (x,y) \in \mathbb{S}^{1}\times (-\delta_{1},\delta_{1}),
\left\{
\begin{array}{l}
|u(x,y)-x|<\epsilon' \\
\frac{\partial u}{\partial x}(x,y)>0
\end{array}
\right.
.$$
Denote by $u_{y}^{-1}$ the inverse of $u(.,y)$.
Denote by $\lambda:(-1,1)\rightarrow[0,1]$ a $C^{\infty}$ map which is supported in $(-\delta_{1},\delta_{1})$ and is equal to one on a neighbourhood of $0$. Now, define:
$$\begin{array}{rcl}
g_{1}:\mathbb{S}^{1}\times (-1,1) \ \rightarrow \ \mathbb{S}^{1}\times (-1,1) \\
(x,y) \ \mapsto \ (\lambda(y)u_{y}^{-1}(x) + (1-\lambda(y))x,y)
\end{array}
.$$
In other words, when $y$ is close to $0$, $g_{1}(x,y)$ is the point with ordinate $y$ which meets $f(\left\{x\right\}\times (-\delta_{1},\delta_{1}))$.
For $\epsilon_{2}>0$ small enough, the map $g_{1}$ satisfies then the following properties:
\begin{enumerate}
\item $\forall x \in \mathbb{S}^{1}, g_{1}^{-1} \circ f (\left\{x\right\} \times (-\epsilon_{2}, \epsilon_{2})) \subset \left\{x\right\} \times (-1,1)$.
\item $d(g_{1},Id)<\epsilon$.
\item $g_{1 \ |I\times(-1,1)}=Id$.
\end{enumerate}

Now the expression of the embedding $f_{1}=g_{1}^{-1} \circ f$ in coordinates is of the form:
$$\begin{array}{rcl}
f_{1}:\mathbb{S}^{1} \times (-\epsilon_{2},\epsilon_{2}) & \rightarrow & \mathbb{S}^{1}\times(-1,1) \\
(x,y) & \mapsto & (x,w(x,y))
\end{array}
.$$
Consider the isotopy, defined for $t$ in $[0,1]$:
$$\begin{array}{rcl}
f_{t}:\mathbb{S}^{1} \times (-\epsilon_{1},\epsilon_{1}) & \rightarrow & \mathbb{S}^{1}\times(-1,1) \\
(x,y) & \mapsto & (x,(1-t)y+t w(x,y))
\end{array}
.$$
The vector field associated to this isotopy is:
$$X(t,x,y)=\frac{d}{dt}(f_{t})(f_{t}^{-1}(x,y)).$$
It is defined for all $t$ in a time-independant neighbourhood of $\mathbb{S}^{1} \times \left\{0\right\}$ which we denote by $\mathbb{S}^{1} \times (-\delta_{2},\delta_{2})$. Observe that this vector fields vanishes on $\mathbb{S}^{1} \times \left\{0\right\}$ and on $I\times (-\delta_{2},\delta_{2})$. Take $0<\delta_{3}<\delta_{2}$ such that:
$$\sup_{(t,x,y)\in[0,1] \times \mathbb{S}^{1} \times (-\delta_{3},\delta_{3})} \left\| X(t,x,y) \right\| < \epsilon'.$$
Now, let $\lambda:\mathbb{R}\rightarrow [0,1]$ be a $C^{\infty}$ function supported in $(-\delta_{3},\delta_{3})$ which is equal to $1$ on a neighbourhood of $0$. Define a vector field $Y$ on $\mathbb{S}^{1}\times(-1,1)$ for $t$ in $[0,1]$ by:
\begin{itemize}
\item $Y(t,x,y)=\lambda(y)X(t,x,y)$ if $(x,y)\in \mathbb{S}^{1}\times (-\delta_{3},\delta_{3})$.
\item $Y(t,x,y)=0$ otherwise.
\end{itemize}
Then, if we denote by $g_{2}$ the time $1$ of the flow of $Y$, the diffeomorphism $g_{2}$ satisfies the following properties:
\begin{enumerate}
\item $g_{2}=f_{1}$ in a neighbourhood of $\mathbb{S}^{1}\times \left\{0\right\}$.
\item $d(g_{2},Id)<\epsilon'$.
\item $g_{2 \ |I\times(-1,1)}=Id$.
\end{enumerate}
Now, the diffeomorphism $g=g_{1} \circ g_{2}$ satisfies the required properties.
\end{proof}

Now, we are ready to prove Lemma \ref{extensionlemma} in the case where $K \cap \partial M=\emptyset$.

\begin{proof}[Proof of Lemma \ref{extensionlemma} in the case where $K \cap \partial M=\emptyset$]
Consider a tubular neighbouhood of $\partial K$ sufficiently small so that, after identification of this neighbourhood with $\partial K \times(-1,1) \subset U$, there exists $I$, a finite union of intervals of $\partial K$, such that $F \subset I \times (-1,1) \subset N$. Choose $\epsilon'>0$ so that the product of two diffeomorphisms in $\mathrm{Diff}^{r}_{0}(M)$ $\epsilon'$-close to the identity for the $C^{0}$ distance $d$ is $\epsilon$-close to the identity. By applying successively Lemmas \ref{extembcirc} and \ref{extembann} to a diffeomorphism $f$ sufficiently close to the identity, we find a diffeomorphism $g_{1}$ with support included in $U$ which satisfies:
$$\left\{ \begin{array}{l}
d(g_{1},Id)<\epsilon' \\
d(g_{1}^{-1} \circ f, Id)< \epsilon'
\end{array}
\right.
,$$
and which coincides with $f$ in a neighbourhood of $\partial K$. Denote by $g_{2}$ the diffeomorphism which is equal to $g_{1}^{-1}\circ f$ on $K$ and to the identity elsewhere. The diffeomorphism $g=g_{1}\circ g_{2}$ satisfies then the required properties.
\end{proof}

To prove the case $K\cap \partial M \neq \emptyset$ in Lemma \ref{extensionlemma}, we need a few extra lemmas.

\begin{lemma} \label{extembint}
Let $J$ be an open interval in $\mathbb{R}$, $J'$ be a closed subinterval of $J$ and $I$ be a finite union of closed intervals in $\mathbb{R}$. For any $\epsilon>0$, there exists $\alpha>0$ such that, given a $C^{r}$ increasing embedding $f:J \rightarrow \mathbb{R} \times \left\{0\right\} \subset \mathbb{R}\times [0,1)$ with:
\begin{itemize}
\item $d(f,Id)<\alpha$
\item $f_{|I \cap J}=Id$,
\end{itemize}
there exists a diffeomorphism $g$ in $\mathrm{Diff}_{0}^{r}(\mathbb{R}\times [0,1))$ which satisfies:
\begin{enumerate}
\item $g_{|J'\times\left\{0\right\}}=f_{|J'\times \left\{0\right\}}$.
\item $d(g,Id)<\epsilon$.
\item $g_{|I\times [0,1)}=Id$.
\end{enumerate}
\end{lemma}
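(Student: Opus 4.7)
The plan is to construct $g$ as the time-$1$ map of a suitably cut-off version of the time-dependent vector field that drives the straight-line isotopy from $\mathrm{Id}$ to $f$. This circumvents the usual difficulty of gluing a $C^0$-small embedding to the identity with a bump function (whose $x$-derivatives we do not control from $\alpha$ alone): the flow of a vector field $Y$ displaces points by at most $\int_0^1\|Y_t\|_\infty\,dt$, a quantity that we can directly bound by $\alpha$.

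First, consider the isotopy $f_t(x)=(1-t)x+tf(x)$ on $J$ for $t\in[0,1]$. Each $f_t$ is a $C^r$ increasing embedding of $J$ into $\mathbb{R}$, because $f_t'(x)=(1-t)+tf'(x)>0$; it interpolates the identity at $t=0$ with $f$ at $t=1$ and equals the identity on $I\cap J$ at every time. The associated time-dependent vector field $X_t$ on $f_t(J)$, defined by $X_t(f_t(x))=f(x)-x$, is $C^r$ by the implicit function theorem applied to $(1-t)x+tf(x)=y$, and satisfies $\|X_t\|_\infty \leq \alpha$.

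Write $J=(a,b)$, $J'=[a',b']$ and choose $a<a''<a'\leq b'<b''<b$ together with bump functions $\phi:\mathbb{R}\to[0,1]$ and $\psi:[0,1)\to[0,1]$ with $\phi\equiv 1$ on $[a'',b'']$, $\mathrm{supp}(\phi)\subset J$, $\psi(0)=1$, and $\mathrm{supp}(\psi)\subset [0,1/2)$. If $\alpha<\min(a'-a'',b''-b')$, then $f_t(J')\subset[a'',b'']$ for every $t\in[0,1]$, since $|f_t(x)-x|\leq \alpha$. Define a time-dependent vector field on $\mathbb{R}\times[0,1)$ by
\[
Y_t(x,y) \;=\; \phi(x)\,\psi(y)\,X_t(x)\,\partial_x,
\]
extended by zero outside $\mathrm{supp}(\phi)\times\mathrm{supp}(\psi)$. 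This is a $C^r$ vector field with compact support, and its time-$1$ flow $g$ is a $C^r$ diffeomorphism of $\mathbb{R}\times[0,1)$ isotopic to the identity.

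Four checks conclude the proof. Since the $y$-component of $Y_t$ vanishes, $g$ preserves every horizontal line and, in particular, preserves the boundary $\mathbb{R}\times\{0\}$. For $x\in J'$, the flow line of $Y_t$ through $(x,0)$ is exactly $t\mapsto (f_t(x),0)$, because $\psi(0)=1$ and $\phi(f_t(x))=1$ throughout the isotopy; hence $g(x,0)=(f(x),0)$. On $I\cap J$ the map $f$ is the identity, so $X_t$ vanishes on the image of $I\cap J'$ under $f_t$, making $Y_t\equiv 0$ on $I\times[0,1)$ and thus $g$ the identity there. Finally, $\|g-\mathrm{Id}\|_\infty \leq \int_0^1\|Y_t\|_\infty\,dt\leq \alpha$, so taking $\alpha<\epsilon$ secures $d(g,\mathrm{Id})<\epsilon$. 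The main obstacle is the coordination between the $C^0$-smallness of $f$ and the cutoff interval $[a'',b'']$: both must be chosen so that the flow line emanating from $J'\times\{0\}$ remains inside the region $\{\phi\equiv 1\}$ throughout the isotopy, which dictates the quantitative dependence of $\alpha$ on the geometry of $J'\subset J$.
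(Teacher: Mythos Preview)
Your argument is essentially the same as the paper's: build the straight-line isotopy $f_t$, take its generating vector field, cut it off by a bump function near $J'\times\{0\}$, and integrate to time $1$. The paper states this in two sentences; your write-up is more explicit about the estimates and the verification of properties (1)--(3). One small technical point to tighten: $X_t$ is defined only on $f_t(J)$, so for $\phi(x)X_t(x)$ to make sense on all of $\mathrm{supp}(\phi)$ you also need $\alpha< d(\mathrm{supp}(\phi),\partial J)$, not only $\alpha<\min(a'-a'',\,b''-b')$; this is implicit in the paper's choice $\alpha<d(J',J^c)$ together with a suitably small cutoff, and is an easy addition to your list of constraints on $\alpha$.
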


\begin{proof}
Let us take $\alpha<min(\epsilon,d(J',J^{c}))$, where $J^{c}$ is the complementary of $J$ in $\mathbb{R}$. Take an embedding $f$ as in the hypothesis of the lemma. Consider the isotopy:
$$\begin{array}{rcl}
h:[0,1] \times J \times [0,1) & \rightarrow & \mathbb{R} \times [0,1) \\
(x,y) & \mapsto & ((1-t)x+tf(x),y)
\end{array}
$$
and denote by $h_{t}$ the embedding $h(t,.)$. The vector field associated with this isotopy is:
$$X(t,x,y)=\frac{d(h_{t})}{dt}(h_{t}^{-1}(x,y)).$$
Notice that $X(t,.)$ is defined on $h_{t}(J \times[0,1))$ which contains a neighbourhood of $J'\times [0,1)$ by the choice of $\alpha$. Then, by multiplying $X$ by a $C^{\infty}$ function supported in a neighbourhood of $J'\times [0,\frac{1}{2}]$ and which is equal to one on a smaller neighbourhood of $J'\times [0,\frac{1}{2}]$, we obtain a compactly supported vector field on $\mathbb{R} \times[0,1)$ whose time one flow satisfies the required properties.
\end{proof}

The following lemma is analogous to Lemma \ref{extembann}.

\begin{lemma} \label{extembhalfplane}
Let $\epsilon_{1}\in [0,1)$, $r\in \mathbb{N} \cup \left\{\infty\right\}$, $J$ be an open interval in $\mathbb{R}$, $J'$ be a closed interval included in $J$ and $I$ denote a finite union of closed intervals in $\mathbb{R}$. Denote by $f$ a $C^{r}$-embedding of $J\times [0,\epsilon_{1})$ in $\mathbb{R}\times [0,1)$ which fixes pointwise $J \times \left\{0\right\}$ and $I\cap J\times [0,\epsilon_{1})$. Then, for any $\epsilon>0$, there exists a diffeomorphism $g$ in $\mathrm{Diff}_{0}^{r}(\mathbb{R} \times [0,1))$ fixing pointwise $\mathbb{R} \times \left\{0\right\}$ which satisfies:
\begin{enumerate}
\item $g=f$ in a neighbourhood of $J'\times\left\{0\right\}$.
\item $d(g,Id)<\epsilon$.
\item $g_{|I\times [0,1)}=Id$.
\end{enumerate}
\end{lemma}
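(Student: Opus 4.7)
The plan is to run the two-step argument from the proof of Lemma \ref{extembann} essentially verbatim, replacing $\mathbb{S}^{1}$ by $\mathbb{R}$ and the circular foliation by the horizontal foliation of $\mathbb{R} \times [0,1)$, with one new ingredient: cutoffs in the $x$-direction, which are available because $J'$ is compactly contained in the open interval $J$. As before, fix $\epsilon' > 0$ such that the composition of two diffeomorphisms $\epsilon'$-close to the identity is $\epsilon$-close to it, and look for a decomposition $g = g_{1} \circ g_{2}$.

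For the first stage, I would exploit that $f$ fixes $J \times \{0\}$ pointwise, so $f^{-1}$ is defined on a neighbourhood of $J \times \{0\}$; writing $f^{-1}(x,y) = (u(x,y), v(x,y))$, one has $u(x,0) = x$ on $J$ and $u(x,y) = x$ on $(I \cap J) \times [0, \epsilon_{1})$. Choose a compact interval $J''$ with $J' \subset \mathrm{int}(J'') \subset J'' \subset J$, a threshold $\delta_{1} > 0$ for which $u(\cdot, y)$ is a diffeomorphism of a neighbourhood of $J''$ onto its image for every $y \in [0, \delta_{1})$, and smooth cutoffs $\mu : \mathbb{R} \to [0,1]$ supported in $J$ with $\mu \equiv 1$ on $J''$ and $\lambda : [0,1) \to [0,1]$ supported in $[0, \delta_{1})$ with $\lambda \equiv 1$ near $0$. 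Set
\begin{equation*}
g_{1}(x, y) = \bigl(\mu(x)\lambda(y)\, u_{y}^{-1}(x) + (1 - \mu(x)\lambda(y))\, x,\ y\bigr),
\end{equation*}
where $u_{y}^{-1}$ denotes the inverse of $u(\cdot, y)$. For $\delta_{1}$ small enough, $g_{1}$ is a $C^{r}$-diffeomorphism of $\mathbb{R} \times [0,1)$ preserving the horizontal foliation, $\epsilon'$-close to the identity, and equal to the identity on $\mathbb{R} \times \{0\}$ and on $I \times [0,1)$ (using $u_{y}^{-1}(x) = x$ for $x \in I \cap J$); moreover, on a neighbourhood of $J' \times \{0\}$ one has $\mu = \lambda = 1$, hence $g_{1}^{-1} \circ f$ sends vertical lines into vertical lines and takes the form $f_{1}(x, y) := g_{1}^{-1} \circ f(x, y) = (x, w(x, y))$ there.

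In the second stage I would imitate the end of the proof of Lemma \ref{extembann} directly: consider the isotopy $f_{t}(x, y) = (x, (1-t) y + t\, w(x, y))$ for $t \in [0,1]$ and its time-dependent vector field $X(t, x, y) = \tfrac{d}{dt}(f_{t})(f_{t}^{-1}(x, y))$, defined on a time-independent neighbourhood of $J' \times \{0\}$, vanishing on $\mathbb{R} \times \{0\}$ and on $(I \cap J) \times [0, \delta)$ for some $\delta > 0$. Multiplying $X$ by a smooth $(x,y)$-cutoff compactly supported in a neighbourhood where $\lVert X \rVert < \epsilon'$ and equal to $1$ on a smaller neighbourhood of $J' \times \{0\}$, the time-$1$ flow $g_{2}$ of the resulting globally defined vector field equals $f_{1}$ near $J' \times \{0\}$, is $\epsilon'$-close to the identity, and equals the identity on $I \times [0, 1)$ and on $\mathbb{R} \times \{0\}$. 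Setting $g = g_{1} \circ g_{2}$ then yields the diffeomorphism required by the lemma.

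The main technical point I expect is the bookkeeping in the construction of $g_{1}$: one must choose $\delta_{1}$ and the cutoff $\mu$ consistently so that $g_{1}$ is a genuine diffeomorphism of $\mathbb{R} \times [0, 1)$ (which requires uniform control of $\partial_{x} u_{y}^{-1}$ on $\mathrm{supp}(\mu)$, provided by $C^{1}$-continuity of $f$ together with $u(x, 0) = x$ once $\delta_{1}$ is small enough) and that the straightening of vertical fibres holds on a genuine neighbourhood of $J' \times \{0\}$, not only at interior points. The compactness of $J'$ inside the open interval $J$ is what makes these choices uniform, playing exactly the role that compactness of $\mathbb{S}^{1}$ played in Lemma \ref{extembann}.
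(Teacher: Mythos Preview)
Your proposal is correct and follows the same two-step decomposition $g=g_{1}\circ g_{2}$ as the paper. The only noteworthy difference is in how you extend $g_{1}$ in the $x$-direction: you interpolate directly via the product cutoff $\mu(x)\lambda(y)$ in a single explicit formula, whereas the paper first defines the embedding $h(x,y)=(\lambda(y)\,u_{y}^{-1}(x)+(1-\lambda(y))x,\,y)$ on $J''\times[0,1)$ using only a $y$-cutoff, and then obtains a global diffeomorphism by cutting off the generating vector field of the isotopy $h_{t}(x,y)=((1-t)x+t\,u'(x,y),y)$ and taking its time-$1$ flow. The vector-field route has the advantage that truncating a compactly supported vector field automatically yields a diffeomorphism, so one need not verify injectivity by hand; your direct formula is more explicit but, as you correctly flag, requires the extra bookkeeping that $\lvert\mu'(x)\rvert\cdot\lvert u_{y}^{-1}(x)-x\rvert$ be small enough (forcing $\delta_{1}$ to be chosen after $\mu$). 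The construction of $g_{2}$ is identical in both.
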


\begin{proof}
This proof is almost the same as the proof of Lemma \ref{extembann}.
Consider $\epsilon'>0$ such that the product of two diffeomorphisms in $\mathrm{Diff}_{0}^{r}(\mathbb{R} \times |0,1))$ $\epsilon'$-close to the identity is $\epsilon$-close to the identity. We will first find a diffeomorphism $g_{1}$ $\epsilon'$-close to the identity which preserves the foliation with leaves $\mathbb{R} \times \left\{y \right\}$ and for which, for $\epsilon_{2}>0$ small enough and $x$ in a neighbourhood of $J'$, $g_{1}^{-1}\circ f(\left\{x \right\} \times [0, \epsilon_{2})) \subset \left\{x \right\} \times (-1,1)$. Then we will find a diffeomorphism $g_{2}$ $\epsilon'$-close to the identity which equals $g_{1}^{-1}\circ f$ on a neighbourhood of $J' \times \left\{0\right\}$ and take $g=g_{1} \circ g_{2}$. 
Suppose the expression of $f^{-1}$ in coordinate is given by:
$$\begin{array}{rcl}
f^{-1}:J \times [0,\epsilon_{1}) & \rightarrow & \mathbb{R}\times(-1,1) \\
(x,y) & \mapsto & (u(x,y),v(x,y))
\end{array}
.$$
Take $\delta<\epsilon_{1}$ and an open interval $J''$ with $J' \subset J'' \subset \overline{J''} \subset J$ such that:
$$\forall (x,y) \in J'' \times [0,\delta),
\left\{
\begin{array}{l}
|u(x,y)-x|<\epsilon' \\
\frac{\partial u}{\partial x}(x,y)>0
\end{array}
\right.
.$$
Denote by $u_{y}$ the function $u(.,y)$.
Let $\lambda: \mathbb{R} \rightarrow [0,1]$ be a $C^{\infty}$ function supported in $(-\delta,\delta)$ and which is equal to one in a neighbourhood of $0$. Consider the embedding:
$$\begin{array}{rcl}
h:J'' \times [0,1) & \rightarrow & \mathbb{R}\times[0,1) \\
(x,y) & \mapsto & \left\{ \begin{array}{l}((1-\lambda(y))y+\lambda(y)u_{y}^{-1}(x),y) \ \mathrm{if} \ y\leq \delta \\ (x,y) \ \mathrm{otherwise} \end{array} \right.
\end{array}
.$$
It coincides with $f$ on a neighbourhood of $J''\times \left\{0\right\}$. We now have to extend it in the horizontal direction in order to have a diffeomorphism. To do this, we consider the isotopy (for $t$ in [0,1]):
$$\begin{array}{rcl}
h_{t}: J'' \times [0,1) & \rightarrow & J''\times[0,1) \\
(x,y) & \mapsto & ((1-t)x+tu'(x,y),y)
\end{array}
,$$
where $u'$ is the first coordinate of $h$. It suffices then to cut the vector field generating this isotopy by a $C^{\infty}$ function to obtain a compactly supported vector field defined on $\mathbb{R}\times [0,1)$. Then the time one of the flow of this vector field gives a diffeomorphism $g_{1}$ of $\mathbb{R} \times[0,1)$ which satisfies the following properties:
\begin{enumerate}
\item for $x$ in a neighbourhood of $J'$, $g_{1}^{-1} \circ f (\left\{x\right\} \times [0, \epsilon_{2})) \subset \left\{x\right\} \times [0,1)$.
\item $d(g_{1},Id)<\epsilon'$.
\item $g_{1 \ |I\times[0,1)}=Id$.
\end{enumerate}
The construction of $g_{2}$ is done exactly the same way as in Lemma \ref{extembann} : consider a well-chosen isotopy which joins the identity to $f_{1}=g_{1}^{-1} \circ f$, the vector field generating it and cut this vector field outside a neighbourhood of $J'\times \left\{0\right\}$ to get a global compactly supported vector field. Take the time one of the flow of this vector field.
Then, it suffices to take $g=g_{1} \circ g_{2}$.
\end{proof}

Now, we can finish the proof of Lemma \ref{extensionlemma} by treating the case where $K \cap \partial M \neq \emptyset$.

\begin{proof}[Proof of Lemma \ref{extensionlemma}: case where $K \cap \partial M \neq \emptyset$]
We will just explain what has to be taken care of in this proof which is analogous to the proof of the case where $\partial K \cap M= \emptyset$.
Take the diffeomorphism $f$ sufficiently close to the identity.
By using Lemma \ref{extembint}, we may find a diffeomorphism $g_{1}$ $C^{0}$-close to the identity which coincides with $f$ on a neighbourhood in $\partial M$ of $\partial K \cap \partial M$. Then, we use Lemma \ref{extembhalfplane} to get a diffeomorphism $g_{2}$ $C^{0}$-close to the identity which coincides with $g_{1}^{-1} \circ f$ on a neighbourhood $V$ in $M$ of $\partial K \cap \partial M$. Let $W$ be a neighbourhood of $\partial K \cap \partial M$ whose closure is included in $V$. Consider then a simple closed path $\gamma: \mathbb{S}^{1}=[0,1]/\mathbb{Z} \rightarrow M$ which satisfies:
$$\left\{ 
\begin{array}{l}
\gamma([-\frac{1}{4},\frac{1}{4}]) \supset \partial K - W \\
\gamma(\mathbb{S}^{1})\subset (\partial K \cup W)-\partial M
\end{array}
\right.
.
$$
By applying the first case of Lemma \ref{extensionlemma} with $K'$ the interior of the Jordan curve $\gamma$ and $F'$ the union of $F$ with the closure of $W$, we may find a $C^{r}$-diffeomorphism $g_{3}$ $C^{0}$-close to the identity which is equal to $g_{1}^{-1}\circ f$ in a neighbourhood of $K$. Then we take $g=g_{1} \circ g_{2} \circ g_{3}$ to end the proof.
\end{proof}

\end{document}